\documentclass[11pt,letterpaper]{amsart}
\setcounter{tocdepth}{4}

\usepackage[margin=1.0in]{geometry}
\usepackage[english]{babel}
\usepackage[utf8]{inputenc}
\usepackage{ulem}
\usepackage{amsmath,amssymb,amsthm, comment, mathtools}
\usepackage{hyperref,mathrsfs,xcolor}
\usepackage{enumitem}
\usepackage{genealogytree}

\newcommand{\R}{\mathbb{R}}


\newcommand{\ve}{\varepsilon}

\usepackage[mathscr]{euscript}

\newcommand{\eee}{equation}
\newcommand{\be}{\begin{\eee}}
\newcommand{\ee}{\end{\eee}}


\numberwithin{equation}{section}
\newtheorem{lemma}{Lemma}[section]
\newtheorem{prop}[lemma]{Proposition}
\newtheorem{theorem}[lemma]{Theorem}
\newtheorem{cor}[lemma]{Corollary}

\theoremstyle{definition}
\newtheorem{remark}[lemma]{Remark}
\newtheorem{defi}[lemma]{Definition}

\newtheorem{example}[lemma]{Example}

\mathtoolsset{showonlyrefs,showmanualtags}

\title{Pensive billiards,  point vortices, and the silver ratio}

\author[T. D. Drivas]{Theodore D. Drivas}
\address{Department of Mathematics, Stony Brook University, Stony Brook, NY, 11790}
\email{tdrivas@math.stonybrook.edu}

\author[D.  Glukhovskiy]{Daniil  Glukhovskiy}
\address{Department of Mathematics, Stony Brook University, Stony Brook, NY, 11790}
\email{daniil.glukhovskiy@stonybrook.edu }

\author[B.  Khesin]{Boris Khesin}
\address{Department of Mathematics, University of Toronto, ON M5S 2E4, Canada}
\email{khesin@math.toronto.edu}

\begin{document}

\maketitle
\vspace{-3mm}
\begin{abstract}
\vspace{-6mm}
We define a new class of plane billiards -- the `pensive billiard' -- in which  the billiard ball travels along the boundary for some distance depending on the incidence angle before reflecting, while preserving the billiard rule of equality of the angles of incidence and reflection.   This generalizes so called `puck billiards' proposed by M.~Bialy, as well as a `vortex billiard', i.e. the motion of a point vortex dipole in 2D hydrodynamics on domains with boundary. We prove the variational origin and invariance of a symplectic structure for pensive billiards, as well as study their properties including conditions for a twist map, the existence of periodic orbits, etc. We also demonstrate the appearance of both the golden and silver ratios in the corresponding hydrodynamical vortex setting. Finally, we introduce and describe basic properties of pensive outer billiards.
\end{abstract}
\vspace{-7mm}

\tableofcontents


\section{Introduction}
In this paper we introduce a new discrete dynamical system which generalizes classical billiards. Given a planar domain, \textit{pensive billiard map} is a composition of the classical billiard map with the translation along the boundary, where this translation may depend on the incidence angle (but not the point of incidence), see Figure \ref{figPensive1}. The classical billiard is recovered if the translation is identically zero; however the dependence can be chosen arbitrarily, thus allowing for description of several physical systems as particular cases of pensive billiards. 

\begin{figure}[h!]\centering
    \includegraphics[width=.45\columnwidth]{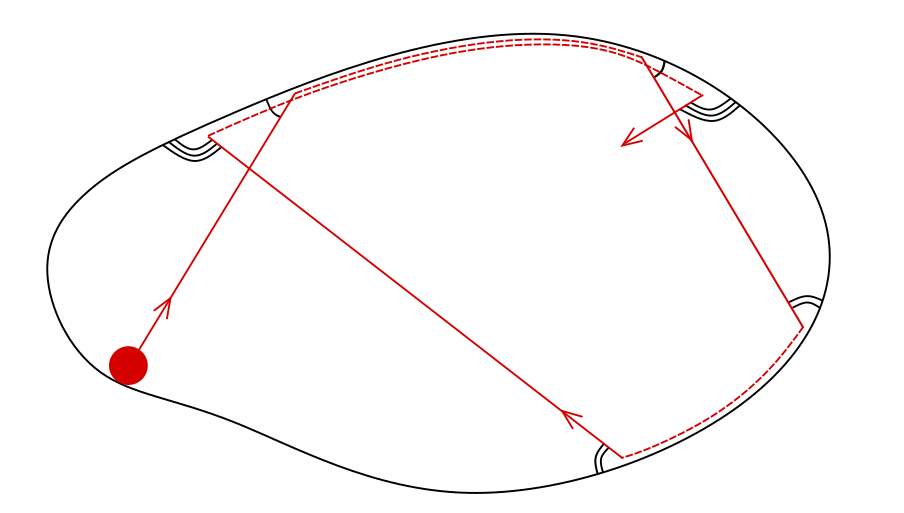} 
  \caption{Pensive Billiard.}
\label{figPensive1}
\end{figure}

One of the systems that admits such a description is a \textit{puck billiard} proposed by M.~Bialy in \cite{Bialy}. Namely,
consider a surface homeomorphic to the sphere obtained by gluing two copies of a given domain $D$ to the cylinder $[0,h] \times \partial D$ (see Figure~\ref{figpuck}). Geodesics on this surface can be identified with  pensive billiard trajectories for a certain delay function, as we discuss below. 

Another example, which we study in detail in this paper, arises in studying 2-dimensional ideal hydrodynamics. A point vortex model describes a motion of a fluid with sharply concentrated vorticity. It is known that, in the limit of zero separation, a pair of oppositely rotating vortices in a planar domain travels together in the straight line perpendicular to the segment connecting them. Once they hit the boundary at a certain angle, they split and travel along the boundary in the opposite directions. Then, if the boundary is closed, they will meet again and merge into a traveling dipole, reflecting from the boundary at the same angle, while their position has shifted (see Figure~\ref{figVortexBilliard}). This motion can be described by a pensive billiard with another explicitly computed delay function, see Section~\ref{sect:vortex-billiard} for more details.

There is an abundance of literature on conventional dynamical billiards; see \cite{Tab}. In this paper we initiate a study of pensive billiards, specifically focusing on vortex systems. In particular we extend several key billiard results to this general setting. The paper is organized as follows:

In Section \ref{sect:pensive} we examine pensive billiards in generality, and prove analogs of classical results. We show area-preservation, derive its variational principle and generating function, as well as discuss additional assumptions under which the twist condition holds. We also provide examples of dynamical behavior.

In Section \ref{sect:puck-billiard}, we generalize puck billiards and prove the correspondence of geodesics on a wider class of surfaces to pensive billiards.

In Section \ref{sect:vortex-billiard}, we study the motion of point vortices and its corresponding pensive billiard systems. Some results of this section are of independent interest for fluid dynamics: in particular we observe the appearance of the silver ratio in hydrodynamical problems (supplementing the appearance of the golden ratio
encountered in \cite{KhesinWang}).

In Section \ref{sect:outer} we define \textit{pensive outer billiards} and prove projective duality of them to pensive billiards on the sphere.


\section{Pensive billiards}\label{sect:pensive}

\subsection{Definition of pensive billiards}

We begin with a planar domain $D\subset \R^2$ whose boundary $\gamma = \partial D$ is a smooth closed curve. The phase space of a pensive billiard system is given by
\be
M=\{(q,v) ~|~q\in \gamma \text{ and } v\in T_q\R^2 \text{ an inward pointing unit vector}\}.
\ee
 It  represents the location and direction of motion of a billiard ball right after impact with the boundary. The classical billiard ball map $\mathsf{CB}\colon M\to M$ sends $(q_1,v_1)\mapsto (q_2,v_2)$, as obtained by traveling the table from $q_1$ in direction $v_1$ until one hits $\gamma$ at $q_2$, and then reflecting the velocity vector $v_1$ about $T_{q_2}\gamma$ to obtain $v_2$ according to the usual reflection law. 

There are several ways to introduce coordinates in $M$. 
Parametrize $\gamma$ by arc length as $\gamma(s)$ for $s\in [0,2L]$, and, given $v\in T_{\gamma(s)}\R^2$, denote by $\theta\in (0,\pi)$ the angle between $v$ and $\gamma'(s)$. 
Thus $M$ is a cylinder with coordinates $(s,\theta)$.
Alternatively,  let $p={\rm proj} \,v|_{T_q\gamma}$ be the projection of $v$ to the tangent space to $\gamma$ at the point $q$. (Note that $p:=\cos\theta$.)
Then $M$ can be identified with the (co)tangent bundle of unit balls,  
\be
M=T^{(*)}_{< 1}\gamma=\{(q, p)~|~q\in \gamma, \ -1 <p<1\}
\ee
 (the Euclidean structure in $\R^2$ allows one to identify tangent and cotangent spaces). 
 Recall that $M$ is a symplectic manifold: Let $\lambda=p\,dq$ and $\omega=d\lambda$ be the Liouville 1-form and the corresponding standard symplectic structure $\omega$ on $T^*\gamma$.

Now we introduce pensive billiards. Fix an arbitrary smooth {\it delay function} $\ell(p)$ for $p\in (-1,1)$
(alternatively, sometimes it is convenient to rewrite this function as $\tilde \ell(\theta):=\ell(\cos\theta)$).
This function will stand for the  length of the path along the boundary  (in the arc-length parametrization) that a billiard ball will spend  
between hitting the boundary (at the angle $\theta$ where $p=\cos\theta$) and reflecting from it.

\begin{defi}
Given a domain $D\subset \R^2$ with boundary $\gamma = \partial D$ and a function
$\tilde \ell(\theta),$ a {\it pensive billiard} $\mathsf{PB}:\,M \to M$ is a map
which sends $(s_1, \theta_1)\mapsto (s_2+\tilde \ell(\theta_2), \theta_2)$ obtained by traveling the table from $\gamma(s_1)$ in direction making angle $\theta_1$ with $\gamma$ at  that point until one hits $\gamma$ at $\gamma(s_2)$ with the incidence angle $\theta_2$,  then traveling along $\gamma$ distance $\tilde \ell(\theta_2)$ depending on the incidence angle, and then reflecting 
from the point $\gamma(s_2+\tilde \ell(\theta_2))$, according to the standard billiard law, at angle $\theta_2$. 
\end{defi}

\begin{figure}[htb]\centering
    \includegraphics[width=.6\columnwidth]{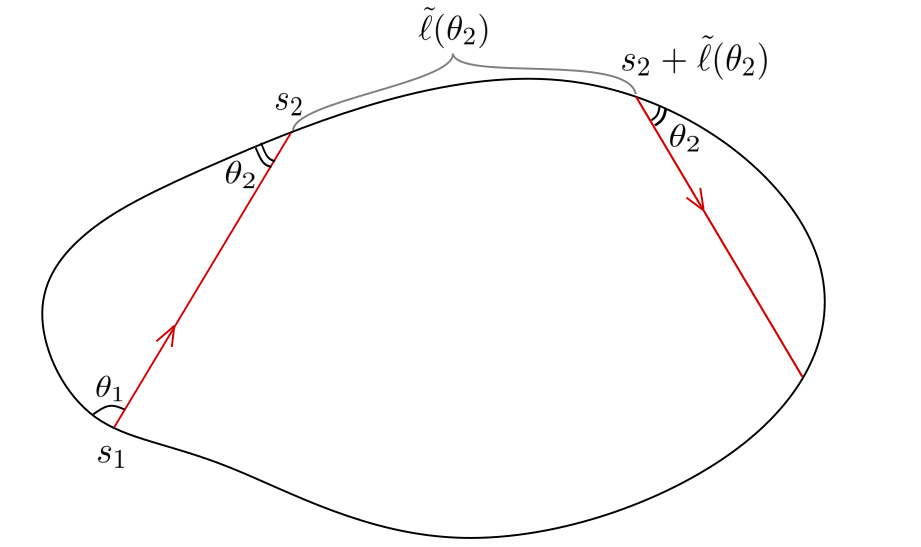} 
  \caption{Pensive Billiard.}
\label{figPensive}
\end{figure}

Alternatively, one can formulate this map in $(s, p)$ coordinates as 
\be 
\mathsf{PB}:(s_1, p_1)\mapsto (s_2+\ell(p_2), p_2)\in T_{< 1}\gamma,
\ee 
resembling the classical billiard which  contemplated  for time $\ell(p)$ before reflecting, hence the name. 

In what follows, we will denote the components of pensive and classical billiard maps by 
\be
(S(s,p), P(s,p)) := \mathsf{PB}(s,p) \quad \text{and} \quad (S_{\mathsf{cl}}(s,p), P_{\mathsf{cl}}(s,p)) := \mathsf{CB}(s,p).
\ee
When it is more convenient to work in the $(s, \theta)$ coordinates, we will write, analogously, 
\be\label{classalp}
(S(s,\theta), \Theta(s,\theta)) := \mathsf{PB}(s,\theta) \quad \text{and} \quad (S_{\mathsf{cl}}(s,\theta), \Theta_{\mathsf{cl}}(s,\theta)) := \mathsf{CB}(s,\theta).
\ee

While  the domain $D$ does not have to be convex for the pensive billiard map to be well defined, there are some additional good properties when this is the case, as we discuss below.  Without convexity, the corresponding billiard maps (both classical and pensive)  might not even be continuous.

\begin{remark}
	Other generalizations of dynamical billiards have been recently studied in 
	\cite{FomenkoVZ, DragovicRadnovich}. In the  planar \textit{billiards with slipping}, billiard ball after hitting the boundary at point $A$ gets transported to the point $\varphi(A)$, where $\varphi$ is a given isometry of the boundary.  An isometry preserving orientation of the boundary curve must be a shift by a constant in the arc-length parameter, and it corresponds to the constant delay function, cf. the example of a satellite billiard below. 
Slipping that changes orientation is related to geodesics on non-orientable surfaces \cite{FomenkoVZ}. 
		In the \textit{magic billiards}, $\varphi$ does not have to be an isometry,  and there are interesting examples related to reflections in two confocal ellipses \cite{DragovicRadnovich}. Our constructions are complimentary, as in the pensive billiards the slipping/delay depends on the angle but not the point of incidence.
\end{remark}

\begin{theorem}
Let $D\subset \mathbb{R}^2$ be  convex.  The 2-form $\omega=dp\wedge dq=\sin \theta\,d\theta \wedge ds$ is $\mathsf{PB}$-invariant (or, equivalently, the pensive billiard map $\mathsf{PB}$ is a symplectomorphism of $M$) for an arbitrary smooth function $\ell$.
\end{theorem}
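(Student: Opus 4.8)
The plan is to factor the pensive billiard map as a composition $\mathsf{PB} = \mathsf{T} \circ \mathsf{CB}$, where $\mathsf{CB}$ is the classical billiard map and $\mathsf{T}\colon (s,p)\mapsto (s+\ell(p), p)$ is the "delay" shift along the boundary, and then show each factor preserves $\omega = dp\wedge ds$. The classical billiard map is a well-known symplectomorphism of $(M,\omega)$ — this is standard (e.g. \cite{Tab}) and may be invoked directly; alternatively one can recall it has the generating function $-|\gamma(s_1)-\gamma(s_2)|$, which by convexity is smooth and gives $p_i = \pm\partial_{s_i}(\text{length})$, whence $p_2\,ds_2 - p_1\,ds_1 = d(\text{something})$ and $\mathsf{CB}^*\omega = \omega$. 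So the entire content is the claim that the shift $\mathsf{T}$ is symplectic.

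For that, I would compute $\mathsf{T}^*\lambda$ where $\lambda = p\,ds$ is the Liouville form. Writing $(\bar s,\bar p) = \mathsf{T}(s,p) = (s+\ell(p), p)$, we get $\mathsf{T}^*(p\,ds) = \bar p\, d\bar s = p\,d(s+\ell(p)) = p\,ds + p\,\ell'(p)\,dp$. Hence $\mathsf{T}^*\lambda - \lambda = p\,\ell'(p)\,dp = d\Phi(p)$, where $\Phi(p) = \int^p t\,\ell'(t)\,dt$ is a function of $p$ alone (a primitive exists since the integrand is a smooth function of the single variable $p$). Therefore $\mathsf{T}^*\omega = \mathsf{T}^*d\lambda = d\,\mathsf{T}^*\lambda = d(\lambda + d\Phi) = d\lambda = \omega$, so $\mathsf{T}$ is a symplectomorphism. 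Composing, $\mathsf{PB}^*\omega = \mathsf{CB}^*\mathsf{T}^*\omega = \mathsf{CB}^*\omega = \omega$. Finally, the identity $dp\wedge ds = \sin\theta\,d\theta\wedge ds$ in the alternative coordinates follows from $p = \cos\theta$, so $dp = -\sin\theta\,d\theta$ and $dp\wedge ds = -\sin\theta\,d\theta\wedge ds$ — I would double-check the orientation/sign convention so the stated formula matches, adjusting the ordering of the wedge if needed.

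The one genuine subtlety, and the step I expect to need the most care, is the well-definedness and smoothness of $\mathsf{PB}$ as a map of $M$ to itself, which is where convexity of $D$ enters: for a convex domain every inward ray hits the boundary again transversally, the impact point $\gamma(s_2)$ and angle $\theta_2$ depend smoothly on $(s_1,\theta_1)$, and the classical billiard map is a genuine diffeomorphism of the open cylinder $M$; the shift $\mathsf{T}$ is obviously a diffeomorphism for any smooth $\ell$. Once that is in hand the symplectic computation above is essentially a one-line exterior-derivative argument, since the delay depends on $p$ (equivalently $\theta_2$) only and not on the base point $s$ — that is precisely what makes $\mathsf{T}^*\lambda - \lambda$ exact. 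It is worth remarking that if $\ell$ were allowed to depend on $s$ as well, the argument would break, so the hypothesis in the definition that $\ell = \ell(p)$ is essential here.
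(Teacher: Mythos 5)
Your proof is correct and takes essentially the same route as the paper: factor $\mathsf{PB}$ as the classical billiard map composed with the angle-dependent boundary shift, invoke the standard symplecticity of $\mathsf{CB}$, and check that the shift preserves the area form. The only cosmetic difference is that you verify the shift step via exactness of $\mathsf{T}^*\lambda-\lambda = p\,\ell'(p)\,dp = d\Phi$ (your $\Phi$ is precisely the potential $V$ the paper introduces later), whereas the paper computes $\mathsf{Sh}^*\omega$ directly using $d\theta\wedge d\theta=0$; both are one-line checks of the same fact.
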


\begin{proof}
First note that the pensive billiard map $\mathsf{PB}$ is a composition of the classical billiard $\mathsf{CB}$ and a shift map $\mathsf{Sh}: (s, \theta)\mapsto (s+\tilde \ell(\theta), \theta)$. Then the  invariance for $\mathsf{PB}$
immediately follows from that for the classical billiard:
$$
\mathsf{PB}^*\omega=\mathsf{Sh}^*(\mathsf{CB}^*\omega)=\mathsf{Sh}^*\omega=\sin\theta\,d\theta\wedge d(s+\tilde \ell(\theta))=
\sin\theta\,d\theta\wedge ds=\omega\,,
$$
as required.
For completeness, we recall the proof of the corresponding lemma for the classical billiard map (see e.g. \cite{Tab}):

\begin{lemma}
The area form $\omega=\sin \theta \, d\theta\wedge dt$ on $M=T^*_{< 1}\gamma$ is $\mathsf{CB}$-invariant (i.e. the classical billiard map is a symplectomorphism).
\end{lemma}

Indeed, let $f(s_1,s_2)=\|\gamma(s_1)- \gamma(s_2)\|$,
 where $\|\cdot \|$ is the Euclidean distance function in $\R^2$. Then, $\frac{\partial f}{\partial s_2}$ is the projection of $\nabla f$ onto the tangent space to the curve $\gamma$ at $\gamma(s_2)$, so $\frac{\partial f}{\partial s_2}=\cos \theta_2$, since $\nabla f$ is a unit vector making angle $\theta_2$ with (the positive tangent to) $\gamma$. Similarly, $\frac{\partial f}{\partial s_1}=-\cos \theta_1$, so 
	\begin{equation*}
		df=\frac{\partial f}{\partial s_1} ds_1 +\frac{\partial f}{\partial s_2}ds_2=-\cos\theta_1 \, d s_1 +\cos \theta_2 \, d s_2.
	\end{equation*}
	This can be understood as follows:  on $M\times M$ the 1-form in the right-hand side  becomes a complete differential $df$ upon restriction to the graph of the billiard  map $\mathsf{CB}$. Now, by taking one more differential we obtain
	\begin{equation*}
		0=d^2f=d\Big{(} \frac{\partial f}{\partial s_1} ds_1 +\frac{\partial f}{\partial s_2}ds_2\Big{)}=\sin\theta_1 \,d\theta_1 \wedge ds_1 -\sin \theta_2\, d\theta_2 \wedge ds_2\,,
	\end{equation*}
which completes the proof of both the lemma and theorem.
\end{proof}

\subsection{Examples of delay functions}\label{sect:examples} 
Here are three  examples of pensive billiard systems:

  \medskip

    \noindent \textbf{A)} {\it Satellite billiard}  (see also  {\it billiards with slipping} in  \cite{FomenkoVZ}). Imagine that a satellite moving with constant velocity on the orbit receives and sends back a signal according to the billiard law, but requires some time to process the signal.   Assuming that the processing takes the same amount of time independent of the angle of incidence, we arrive at the definition of pensive billiard with constant delay function, $\ell(p)\equiv {\rm const}$.
    The classical billiard corresponds to $\ell(p)\equiv 0$.
     
  \medskip
   
    \noindent \textbf{B)}  {\it Puck billiard} (also called a {\it coin billiard}, see \cite{Bialy}).   
     Consider the geodesic flow on the surface of a cylinder of height $h$ with
base $\gamma=\partial D$, glued on the top and on the bottom to two copies of $D$. On the top and the bottom
of the cylinder the motion is along straight lines, while on the cylindrical part it goes
along geodesics of the cylinder. Identifying top and bottom copies of $D$, such a motion is described by the pensive billiard, where 
the delay function $\tilde \ell(\theta)$ is the base of the right triangle of height $h$ and angle $\theta$ opposite to the base, i.e. $\tilde \ell(\theta)=h\cot\theta$, or equivalently, $\ell(p)=hp/\sqrt{1-p^2}$ for $p=\cos\theta$.

\begin{figure}[htb]\centering
    \includegraphics[width=.5\columnwidth]{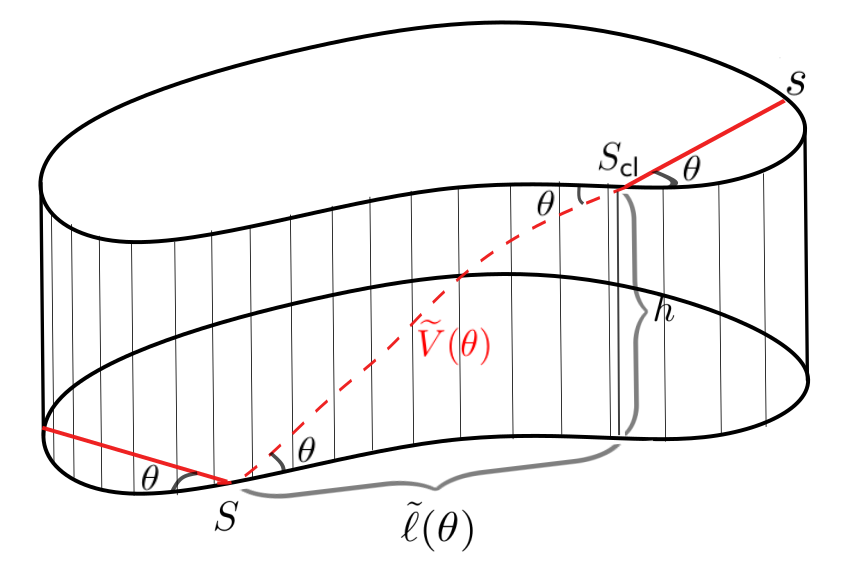} 
  \caption{Puck Billiard. For a puck billiard, generating function \eqref{genfun},  involves potential $V(p)=\widetilde{V}(\theta)$ defined by Equation \eqref{potential} and equal to the length of the geodesic's part  on the side surface of the puck.}
\label{figpuck}
\end{figure}

  \medskip

    \noindent \textbf{C)}  {\it Vortex billiard}. This pensive billiard arises naturally in the motion of point vortex dipoles in a bounded two-dimensional domain $D$. Here we only mention that it corresponds to the delay function
     $\ell(p)=L(1 -p/\sqrt{1+p^2})$, where the length of $\gamma=\partial D$ is equal $2L$.   
     We discuss the vortex billiard in detail in Section \ref{sect:vortex-billiard}.

\begin{figure}[h!]\centering
    \includegraphics[width=.9\columnwidth]{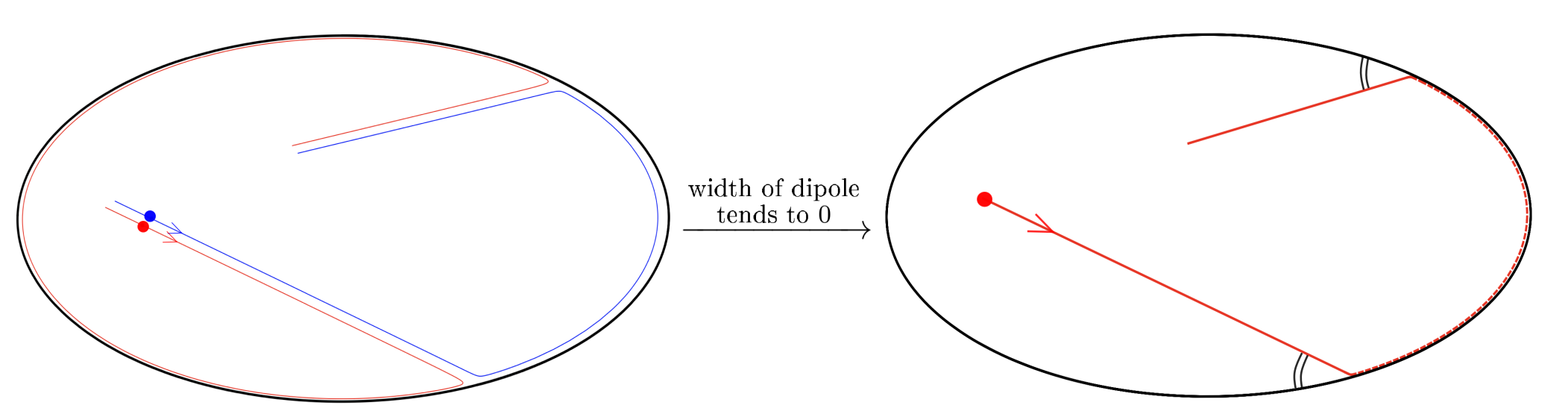} 
  \caption{Pensive Billiard arising as a limit of point vortex motion.}
\label{figVortexBilliard}
\end{figure}

\subsection{Variational principle}
Here we prove that  the pensive billiard with any delay function is subject to a variational principle.  

	Let $\gamma(s)$ be a point on the boundary $\gamma=\partial D$ and $A\in D$ a point inside the domain. Recall that the cosine of the incidence angle at $\gamma(s)$ is
$\cos \theta=p_A(s) =  \frac{(\gamma(s)-A) \cdot \gamma'(s)}{\|\gamma(s) - A\|}.$

	\begin{theorem}\label{thm:potential}
	Given two points  $A, B \in D$, the pensive billiard trajectory that goes from $A$ to $B$ after hitting boundary once must hit $\gamma$ at point $\gamma(s)$, where $s$ is a critical point of the function:
		\be
		f_{A,B}(s) = \|A -\gamma(s)\| + V(p_A(s)) + \|\gamma(s + \ell(p_A(s))) - B\|,
		\ee 
		where a ``potential'' $V$ is  defined by the integral
		\be\label{potential}
		V(p) = \int_0^{p}q\,\ell'({q})\,d{q}\,.
		\ee
	\end{theorem}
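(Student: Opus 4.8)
The plan is to prove this exactly as one proves the classical ``string'' (variational) principle for billiards: differentiate $f_{A,B}$ in $s$ and recognize the vanishing of the first variation as the equal-angle reflection law. Throughout I abbreviate $p:=p_A(s)$ and write $\sigma=\sigma(s):=s+\ell(p_A(s))$ for the arc-length parameter of the reflection point, so that the trajectory under consideration runs $A\to\gamma(s)\to\gamma(\sigma)\to B$, striking $\gamma$ at $\gamma(s)$ with incidence angle $\theta$ (where $\cos\theta=p_A(s)$) and reflecting at $\gamma(\sigma)$. The one computational input needed is the elementary identity already used in the proof of the classical billiard lemma: for a fixed $X\in\R^2$,
\[
\frac{d}{ds}\,\|\gamma(s)-X\|=\frac{(\gamma(s)-X)\cdot\hat\tau(s)}{\|\gamma(s)-X\|},
\]
that is, the derivative of the distance to $X$ is the orthogonal projection of the unit direction vector $(\gamma(s)-X)/\|\gamma(s)-X\|$ onto the positive unit tangent $\hat\tau(s)=\gamma'(s)$.

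I would then differentiate $f_{A,B}$ term by term. The first term contributes $(\gamma(s)-A)\cdot\hat\tau(s)/\|\gamma(s)-A\|=p_A(s)=p$ immediately. For the third term, the chain rule through $\sigma(s)$ (using $\sigma'(s)=1+\ell'(p)\,p_A'(s)$) gives
\[
\frac{d}{ds}\,\|\gamma(\sigma)-B\|=\frac{(\gamma(\sigma)-B)\cdot\hat\tau(\sigma)}{\|\gamma(\sigma)-B\|}\,\sigma'(s)=-\tilde p\,\bigl(1+\ell'(p)\,p_A'(s)\bigr),
\]
where $\tilde p:=(B-\gamma(\sigma))\cdot\hat\tau(\sigma)/\|B-\gamma(\sigma)\|$ is precisely the cosine of the angle that the outgoing segment from $\gamma(\sigma)$ to $B$ makes with the positive tangent at $\gamma(\sigma)$. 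Finally, the potential term contributes $\frac{d}{ds}V(p_A(s))=V'(p)\,p_A'(s)=p\,\ell'(p)\,p_A'(s)$ by the fundamental theorem of calculus applied to the definition \eqref{potential} of $V$. Summing the three and factoring,
\[
f_{A,B}'(s)=\bigl(p-\tilde p\bigr)+\bigl(p-\tilde p\bigr)\ell'(p)\,p_A'(s)=\bigl(p_A(s)-\tilde p\bigr)\,\sigma'(s).
\]
This is the heart of the matter: the prescription $V'(p)=p\,\ell'(p)$ is engineered precisely so that the cross-term $p\,\ell'(p)\,p_A'(s)$ coming from the potential cancels the corresponding term produced by the motion of the reflection point $\gamma(\sigma)$, leaving a product whose first factor is exactly the mismatch between the incidence and reflection cosines.

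It remains only to read off the conclusion. Along a genuine pensive billiard trajectory the ball reflects at $\gamma(\sigma)$ at the same angle $\theta$ at which it struck $\gamma(s)$, by the standard billiard law; since $\tilde p$ is by construction the cosine of that reflection angle, this forces $\tilde p=\cos\theta=p_A(s)$, whence $f_{A,B}'(s)=0$ and $s$ is a critical point, as claimed. (Conversely, at any point where the non-degeneracy $\sigma'(s)=1+\ell'(p)\,p_A'(s)\neq 0$ holds a critical point of $f_{A,B}$ forces $p_A(s)=\tilde p$ and hence an honest reflection, though only the stated direction is required here.) I do not anticipate a real obstacle beyond careful sign/orientation bookkeeping — keeping track of which unit vector points into versus out of each of the points $\gamma(s)$ and $\gamma(\sigma)$, so that the incidence cosine $p_A(s)$ and the reflection cosine $\tilde p$ enter with the correct signs; once $V$ is taken as prescribed, the algebra above is otherwise forced.
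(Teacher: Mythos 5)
Your proof is correct and follows essentially the same route as the paper: differentiate $f_{A,B}$ term by term using $\frac{d}{ds}\|\gamma(s)-X\|=\frac{(\gamma(s)-X)\cdot\hat\tau(s)}{\|\gamma(s)-X\|}$, use $V'(p)=p\,\ell'(p)$ to produce the cancellation, factor the derivative as (cosine mismatch)$\times\sigma'(s)$, and invoke the equal-angle reflection law. The only difference is cosmetic sign bookkeeping — your $\tilde p$ is the negative of the paper's $p_B(\sigma)$, so your condition $p_A(s)=\tilde p$ is exactly the paper's $p_A(s)+p_B(s+\ell(p_A(s)))=0$ — and your remark on the converse under $\sigma'(s)\neq 0$ is a harmless addition.
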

	
	\begin{proof}
		For convenience, define 
		$d_A(s) = \|\gamma(s) - A\|$, and observe that 
	$$
	\frac{d}{ds}d_A(s) = \nabla_Q\|Q - A\||_{Q = \gamma(s)}\cdot \gamma'(s)
	= \frac{(\gamma(s)-A)}{\|\gamma(s) - A\|}\cdot \gamma'(s)= p_A(s)\,.
	$$
	We then compute
	\begin{align}
		\frac{d}{ds}f_{A,B}(s) &= \frac{d}{ds}d_A(s) + \frac{d}{ds}V(p_A(s)) + \frac{d}{ds}d_B(s + \ell(p_A(s))) \\
		&= p_A(s) + V'(p_A(s))p_A'(s) + p_B(s + \ell(p_A(s)))(1 + \ell'(p_A(s))p_A'(s))\\
		&= p_A(s) + p_A(s)\ell'(p_A(s))p_A'(s) + p_B(s + \ell(p_A(s)))(1 + \ell'(p_A(s))p_A'(s))\\
		&= (p_A(s) + p_B(s + \ell(p_A(s))))(1 + \ell'(p_A(s))p_A'(s)).
	\end{align}
Finally, recall that pensive billiard trajectories are characterized by $p_A(s) + p_B(s + \ell(p_A(s))) = 0$, equivalent to the equality of the angles of incidence and reflection.
	\end{proof}

\begin{remark}
	The puck billiard is given by $\ell(p) = h \cot(\arccos p) = h\frac{p}{\sqrt{1 - p^2}}$. A direct computation gives that in this case potential \eqref{potential} is
	\be
	V(p) = \int_0^{p}q\,\ell'(q)\,dq = h\int_0^{p}\frac{q}{(1 - q^2)^{3/2}}dq= \frac{h}{\sqrt{1 - p^2}} = \frac{h}{\sin(\arccos p)}.
	\ee	In this case, potential $V$ has a natural interpretation: it is equal to the length of ``curvilinear hypotenuse'', as shown in Figure~\ref{figpuck}. This interpretation also extends to the case of generalized puck billiards 
	(see Section~\ref{sect:puck-billiard}).
\end{remark}

\begin{remark}
Geometrically, the potential 
$$
V(p) = \int_0^{p}q\,\ell'(q)\,dq = p\,\ell(p)- \int_0^{p} \ell(q)\,dq
$$
is equal to the area above the graph of the delay function $\ell(q)$ inside the rectangle $[0,p]\times[0,\ell(p)]$. Such an area naturally arises in the context of generating functions for twist maps, see Section \ref{sect:twist}.
\end{remark}

\subsection{Generating function}
It turns out that a large class of pensive billiards admits explicit generating functions. To specify this class, we introduce the following assumption.
\begin{defi}
	Let $D \subset \R^2$ be convex . We call the pensive billiard in $D$ \textit{locally transitive} at $(s_0,p_0) \in M$ if $\frac{\partial S}{\partial p} (s_0,p_0)\neq 0$   where $(S_0, P_0)=\mathsf{PB}(s_0, p_0)$.  We say that it is \textit{globally transitive} if it is locally transitive everywhere.
\end{defi}

By the implicit function theorem, this condition implies that in a neighborhood $U$ of $(s_0, S_0) \in \gamma\times\gamma$, for any choice of points $(s, S) \in U$, one could smoothly choose a direction $p(s,S)$ such that $\mathsf{PB}(s, p(s,S)) = (S, P)$. (Note that global transitivity is equivalent to the twist condition, as discussed below.)

For classical billiards, this property is a consequence of convexity of the domain. For general pensive billiards, however, this assumption is not satisfied automatically. An example of a situation in which this assumption fails dramatically is a pensive billiard on the unit disk with delay function $\tilde{\ell}(\theta) = -2\theta$. One can easily observe that in this case pensive billiard map is the identity (see Equation \eqref{disk-pb} below), and consequently the billiard is not transitive.

Note that the example above is non-generic even within the class of linear delays: one can see that for any other delay function of the form $\tilde{\ell}(\theta) = C\theta$ with $C \neq -2$, pensive billiard on a disk is transitive. 
\smallskip

Now we can introduce a generating function for  transitive pensive billiards.
\begin{defi}
	Consider a  pensive billiard on $\gamma$ with smooth delay function $\ell$. Assume that it is locally transitive at $(s_0,p_0)$. Given $s, S \in (0, L)$ sufficiently close to $s_0, S_0$ denote by $p^* = p^*(s, S)$ a direction in which the pensive billiard ball need to leave from $\gamma(s)$ to arrive at $\gamma(S)$, i.e. a solution of 
	\be\label{p-star}
	S_{\mathsf{cl}}(s, p^*) + \ell(P_{\mathsf{cl}}(s, p^*)) = S,
	\ee
	the existence of which is guaranteed by transitivity.
	Recall the potential
	\be
	V(p) :=  \int_{0}^{p} q \,\ell'(q) dq,
	\ee
	and define a map
	$H: \mathbb{R}^2 \to \mathbb{R}$ as
	\be\label{genfun}
	\begin{aligned}
	H(s, S) 
	&:= H_{\mathsf{cl}}(s, S_{\mathsf{cl}}(s, p^*)) + V(P_{\mathsf{cl}}(s,p^*)),
	\end{aligned}
	\ee
	where $H_{\mathsf{cl}}(s,S)  := ||\gamma(s) - \gamma(S)||$ is a standard generating function for classical billiards.
	Map $H$ is called a \textit{generating function of a pensive billiard with a delay function} $\ell$.
\end{defi} 
This definition is motivated by the proposition (well known for classical billiards, see \cite{Tab}):
\begin{prop}
	Any locally transitive pensive billiard possesses a generating function 
	$	H(s, S)$  given by \eqref{genfun}.  Namely, if $(S, P) = \mathsf{PB}(s, p)$ then 
	\be\label{pensive-ham}
	\frac{\partial H }{\partial s}(s, S) = -p \quad \text{and} \quad 	\frac{\partial H }{\partial S}(s, S) = P.
	\ee
\end{prop}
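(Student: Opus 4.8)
The plan is to reduce \eqref{pensive-ham} to the classical generating-function identity for $H_{\mathsf{cl}}$ together with the relation $V'(p)=p\,\ell'(p)$ coming from \eqref{potential}, via a short chain-rule computation in which the extra terms telescope.

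First I would set up $p^*$ carefully. Since $S(s,p)=S_{\mathsf{cl}}(s,p)+\ell(P_{\mathsf{cl}}(s,p))$ is the first component of $\mathsf{PB}$, local transitivity at $(s_0,p_0)$ says exactly that $\partial S/\partial p(s_0,p_0)\neq 0$; hence the implicit function theorem applied to $F(s,S,p):=S(s,p)-S$ yields a smooth solution $p^*=p^*(s,S)$ of \eqref{p-star} on a neighborhood of $(s_0,S_0)$, with $\mathsf{PB}(s,p^*(s,S))=(S,P_{\mathsf{cl}}(s,p^*(s,S)))$ by construction. In particular, if $(S,P)=\mathsf{PB}(s,p)$ then $p^*(s,S)=p$ and $P_{\mathsf{cl}}(s,p^*(s,S))=P$. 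Next I would record the classical fact, obtained exactly as in the proof of the Lemma above by restricting $df$ — with $f(s_1,s_2)=\|\gamma(s_1)-\gamma(s_2)\|=H_{\mathsf{cl}}(s_1,s_2)$ — to the graph of $\mathsf{CB}$: if $(\sigma,P^*)=\mathsf{CB}(s,p^*)$ then $\partial_s H_{\mathsf{cl}}(s,\sigma)=-p^*$ and $\partial_\sigma H_{\mathsf{cl}}(s,\sigma)=P^*$, the signs being fixed by the orientation convention for $\theta$.

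Now abbreviate $\sigma(s,S):=S_{\mathsf{cl}}(s,p^*(s,S))$ and $P^*(s,S):=P_{\mathsf{cl}}(s,p^*(s,S))$, so that $H(s,S)=H_{\mathsf{cl}}(s,\sigma)+V(P^*)$ and \eqref{p-star} reads as the constraint $\sigma+\ell(P^*)=S$. Differentiating the constraint gives $\partial_s\sigma+\ell'(P^*)\,\partial_s P^*=0$ and $\partial_S\sigma+\ell'(P^*)\,\partial_S P^*=1$. Differentiating $H$ by the chain rule and substituting $\partial_s H_{\mathsf{cl}}=-p^*$, $\partial_\sigma H_{\mathsf{cl}}=P^*$, and $V'(P^*)=P^*\ell'(P^*)$, the $\sigma$- and $P^*$-terms combine into $P^*\bigl(\partial_s\sigma+\ell'(P^*)\,\partial_s P^*\bigr)=0$ in the first case and $P^*\bigl(\partial_S\sigma+\ell'(P^*)\,\partial_S P^*\bigr)=P^*$ in the second. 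This yields $\partial_s H(s,S)=-p^*=-p$ and $\partial_S H(s,S)=P^*=P$, which is \eqref{pensive-ham}.

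The computation is short; the points needing care are (i) the implicit-function step producing $p^*$, where one must notice that the $p$-derivative of $F$ is precisely the transitivity quantity $\partial S/\partial p$; (ii) matching sign/orientation conventions so that the classical identity reads with $-p^*$ and $+P^*$ rather than the reverse; and (iii) the algebraic cancellation, which works only because of the specific normalization $V(p)=\int_0^p q\,\ell'(q)\,dq$ — indeed this is exactly what forces that potential into the definition \eqref{genfun}. I do not expect any obstacle beyond this bookkeeping.
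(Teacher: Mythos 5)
Your proposal is correct and follows essentially the same route as the paper: reduce to the classical identities $\partial_s H_{\mathsf{cl}}=-p^*$, $\partial_{S_{\mathsf{cl}}}H_{\mathsf{cl}}=P_{\mathsf{cl}}$, differentiate the constraint \eqref{p-star} in $s$ and $S$, and let the $V'(P^*)=P^*\ell'(P^*)$ term combine with the $\sigma$-term so the bracket equals $0$ resp.\ $1$. In fact your write-up carries the $\ell'(P_{\mathsf{cl}})$ factors explicitly, which the paper's displayed equations \eqref{p-star-diff} and the subsequent computation drop (evidently a consistent typo), so no changes are needed.
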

\begin{proof}
	Proposition follows from the corresponding one for classical billiards. Recall from \cite{Tab} that for $H_{\mathsf{cl}}$ we have that if $\mathsf{CB}(s, p) = (S_{\mathsf{cl}}, P_{\mathsf{cl}})$, then 
	\be\label{classical-ham}
	\frac{\partial H_{\mathsf{cl}} }{\partial s}(s, S_{\mathsf{cl}}) = -p \quad \text{and} \quad 	\frac{\partial H_{\mathsf{cl}} }{\partial S_{\mathsf{cl}}}(s, S_{\mathsf{cl}}) = P_{\mathsf{cl}}.
	\ee
	In addition, observe that differentiating \eqref{p-star} with respect to $s$ and $S$, we obtain
	\be\label{p-star-diff}
		\begin{aligned}
			\frac{\partial S_{\mathsf{cl}}}{\partial s} + \frac{\partial S_{\mathsf{cl}}}{\partial p}\frac{\partial p^*}{\partial s} + \ell'(P_{\rm cl})\left(\frac{\partial P_{\mathsf{cl}}}{\partial s} + \frac{\partial P_{\mathsf{cl}}}{\partial p}\frac{\partial p^*}{\partial s}\right) &= 0 \quad \text{and}\\
			\frac{\partial S_{\mathsf{cl}}}{\partial p}\frac{\partial p^*}{\partial S} +  \ell'(P_{\rm cl})
			\frac{\partial P_{\mathsf{cl}}}{\partial p}\frac{\partial p^*}{\partial S} &= 1.
		\end{aligned}
		\ee
	Combining \eqref{classical-ham} and \eqref{p-star-diff}, we have
	\be
		\begin{aligned}
			\frac{\partial H}{\partial s}(s, S) &= \frac{\partial  }{\partial s}H_{\mathsf{cl}}(s, S_{\mathsf{cl}}(s, p^*)) + \frac{\partial }{\partial s}\int_{0}^{P_{\mathsf{cl}}(s,p^*)} q \,\ell'(q) dq\\
			&= -p + P_{\mathsf{cl}}\bigg[ \frac{\partial S_{\mathsf{cl}}}{\partial s} + \frac{\partial S_{\mathsf{cl}}}{\partial p}\frac{\partial p^*}{\partial s}\bigg] + P_{\mathsf{cl}} \ell'(P_{\rm cl})\bigg[ \frac{\partial P_{\mathsf{cl}}}{\partial s} + \frac{\partial P_{\mathsf{cl}}}{\partial p}\frac{\partial p^*}{\partial s}\bigg]\\
			&= -p + P_{\mathsf{cl}}\bigg[ \frac{\partial S_{\mathsf{cl}}}{\partial s} + \frac{\partial S_{\mathsf{cl}}}{\partial p}\frac{\partial p^*}{\partial s} +  \ell'(P_{\rm cl})\bigg(\frac{\partial P_{\mathsf{cl}}}{\partial s} + \frac{\partial P_{\mathsf{cl}}}{\partial p}\frac{\partial p^*}{\partial s}\bigg)\bigg]= -p.
		\end{aligned}
		\ee
	Analogously, 
	\be
	\begin{aligned}
		\frac{\partial H}{\partial S}(s, S) &= \frac{\partial  }{\partial S}H_{\mathsf{cl}}(s, S_{\mathsf{cl}}(s, p^*)) + \frac{\partial }{\partial S}\int_{0}^{P_{\mathsf{cl}}(s,p^*)} q \,\ell'(q) dq\\
		&=  P_{\mathsf{cl}}\frac{\partial S_{\mathsf{cl}}}{\partial p}\frac{\partial p^*}{\partial S} + P_{\mathsf{cl}} \ell'(P_{\rm cl})\frac{\partial P_{\mathsf{cl}}}{\partial p}\frac{\partial p^*}{\partial S}= P_{\mathsf{cl}}\bigg[\frac{\partial S_{\mathsf{cl}}}{\partial p}\frac{\partial p^*}{\partial S} +  \ell'(P_{\rm cl})\frac{\partial P_{\mathsf{cl}}}{\partial p}\frac{\partial p^*}{\partial S}\bigg]= P_{\mathsf{cl}}= P.
	\end{aligned}
	\ee
\end{proof}

Proposition above allows for the description of periodic pensive billiard trajectories as critical points of some action. Indeed, for $N$ points $s_1, ..., s_N$ on $\gamma$, suppose that there exist $p_1, ..., p_N$ such that a billiard is locally transitive at $(s_i, p_i)$ and $S(s_i, p_i) = s_{i + 1}$. Then locally around $(s_i, s_{i+1})$ we can define generating functions $H_i$. With that, define (with the convention $s_{N + 1} = s_1$)
\be
H^N(s_1, ..., s_N) := \sum_{i = 1}^N H_i(s_i, s_{i+1})\,.
\ee 
\begin{cor}
A sequence of points $\{\gamma(s_1), \gamma(s_2), \dots, \gamma(s_N)\}$ constitutes a periodic trajectory if and only if 
\be
\frac{\partial H^N}{\partial s_i} = 0
\ee
for $i = 1, ..., N$. In this case, cosines of angles of incidence at points $\gamma(s_i)$ are given by 
\be
p_i = -\frac{\partial H_i}{\partial s_i}(s_i, s_{i +1}).
\ee
\end{cor}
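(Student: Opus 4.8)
The plan is to deduce the corollary directly from the Proposition on generating functions, treating $H^N$ as a genuine function of $(s_1,\dots,s_N)$ alone. The key preliminary observation is that local transitivity at each $(s_i,p_i)$ makes each $H_i(s_i,s_{i+1})$ well-defined and smooth in a neighborhood of $(s_i,s_{i+1})$, and moreover the auxiliary directions $p_i = p^*(s_i,s_{i+1})$ are themselves determined by the pair $(s_i,s_{i+1})$ via \eqref{p-star}; so $H^N$ depends only on the configuration $(s_1,\dots,s_N)$ of boundary points, and the variational characterization is not circular.

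Next I would compute $\partial H^N/\partial s_i$ for a fixed $i$. In the cyclic sum $H^N = \sum_{j=1}^N H_j(s_j,s_{j+1})$ (indices mod $N$), exactly two summands contain $s_i$, namely $H_{i-1}(s_{i-1},s_i)$ and $H_i(s_i,s_{i+1})$, so
\be
\frac{\partial H^N}{\partial s_i} = \frac{\partial H_{i-1}}{\partial s_i}(s_{i-1},s_i) + \frac{\partial H_i}{\partial s_i}(s_i,s_{i+1}).
\ee
Now apply \eqref{pensive-ham} to each term. Writing $\mathsf{PB}(s_i,p_i) = (s_{i+1},P(s_i,p_i))$, the first identity of \eqref{pensive-ham} gives $\partial H_i/\partial s_i(s_i,s_{i+1}) = -p_i$; and writing $\mathsf{PB}(s_{i-1},p_{i-1}) = (s_i,P(s_{i-1},p_{i-1}))$, the second identity of \eqref{pensive-ham} gives $\partial H_{i-1}/\partial s_i(s_{i-1},s_i) = P(s_{i-1},p_{i-1})$. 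Combining,
\be
\frac{\partial H^N}{\partial s_i} = P(s_{i-1},p_{i-1}) - p_i .
\ee

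From here the corollary falls out. The condition $\partial H^N/\partial s_i = 0$ for all $i$ is equivalent to $P(s_{i-1},p_{i-1}) = p_i$ for all $i$, i.e. the outgoing direction produced by the $(i-1)$-th pensive bounce at $\gamma(s_i)$ coincides with the direction $p_i$ used for the $i$-th bounce. This is precisely the statement that $(s_1,p_1),\dots,(s_N,p_N)$ is a genuine periodic orbit of $\mathsf{PB}$, which is the definition of a periodic pensive billiard trajectory through $\gamma(s_1),\dots,\gamma(s_N)$; hence the ``if and only if.'' The formula $p_i = -\partial H_i/\partial s_i(s_i,s_{i+1})$ is then just the first relation of \eqref{pensive-ham} restated for the index $i$, valid because along a periodic orbit $\mathsf{PB}(s_i,p_i) = (s_{i+1},\cdot)$.

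I do not anticipate a serious obstacle: the argument is a routine consequence of the Proposition, and the only points requiring care are (a) the cyclic index bookkeeping in identifying which two summands of $H^N$ involve $s_i$, and (b) making explicit that ``periodic trajectory'' unwinds to the gluing condition $P(s_{i-1},p_{i-1}) = p_i$, so that the critical-point equations match the orbit condition term by term. The mildly subtle conceptual step, worth a sentence in the write-up, is the one noted above — that transitivity pins down the intermediate directions $p_i$ from the endpoints, so that $H^N$ is legitimately a function on $\gamma^N$ and differentiating it makes sense.
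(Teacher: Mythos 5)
Your argument is correct and is exactly the computation the paper leaves implicit: the corollary is stated there without proof as an immediate consequence of the generating-function relations \eqref{pensive-ham}, and your identity $\partial H^N/\partial s_i = P(s_{i-1},p_{i-1}) - p_i$ together with the matching condition is the intended justification. Your remark that local transitivity pins down the intermediate directions so that $H^N$ is a genuine function of the boundary points is a worthwhile clarification, but the route is the same.
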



\subsection{Twist property}\label{sect:twist}
The next natural question is under which conditions pensive billiard is a twist map. 
\begin{defi}{(see e.g. \cite[Definition 9.3.1]{Katok_Hasselblatt})}
	A diffeomorphism $f:S^1 \times (0,1)\rightarrow S^1 \times (0,1)$ is called a \textit{right (respectively, left) twist map}, if its lift $F = (F_1, F_2)$ to the universal cover satisfies 
	\be
	\partial_2 F_1 > 0 \quad (\textit{respectively,}\ \partial_2 F_1 < 0).
	\ee 
\end{defi}

	Recall that by definition  for the pensive billiard we have $	S = S_{\mathsf{cl}} + \tilde{\ell}(\Theta_{\mathsf{cl}})$ so that
	\be
	\frac{\partial S}{\partial \theta} = \frac{\partial S_{\mathsf{cl}}}{\partial \theta} + {\tilde{\ell}}'(\Theta_{\mathsf{cl}})\frac{\partial \Theta_{\mathsf{cl}}}{\partial \theta}.
	\ee
	For the classical billiards in smooth convex domains we have (see \cite[Part V, Theorem 4.2]{Katok_1986})
	\be
	\nabla \mathsf{CB}(s, \theta) = 
	\begin{pmatrix}
		\frac{\partial S_{\mathsf{cl}}}{\partial s}      & \frac{\partial S_{\mathsf{cl}}}{\partial \theta}\\[1ex]
		\frac{\partial \Theta_{\mathsf{cl}}}{\partial s}      & \frac{\partial \Theta_{\mathsf{cl}}}{\partial \theta}
	\end{pmatrix}
	=
	\begin{pmatrix}
	\frac{\kappa(s)d(s, S_{\mathsf{cl}}) - \sin\theta}{\sin \Theta_{\mathsf{cl}}}      & \frac{d(s, S_{\mathsf{cl}})}{\sin \Theta_{\mathsf{cl}}}\\[1ex]
	\frac{\kappa(S_{\mathsf{cl}})\kappa(s)d(s, S_{\mathsf{cl}}) - \kappa(S_{\mathsf{cl}})\sin\theta - \kappa(s)\sin \Theta_{\mathsf{cl}}}{\sin \Theta_{\mathsf{cl}}}      & \frac{\kappa(S_{\mathsf{cl}})d(s, S_{\mathsf{cl}}) - \sin \Theta_{\mathsf{cl}}}{\sin \Theta_{\mathsf{cl}}}
	\end{pmatrix},
	\ee
where $d(s, S_{\mathsf{cl}})$ is the Euclidean distance between points $\gamma(s)$ and $\gamma(S_{\mathsf{cl}})$, and $\kappa(s)$ is the curvature of $\gamma$ at $\gamma(s)$. Consequently, the right (left) twist condition for the pensive billiard map becomes
	\be
	0 \overset{(>)}{<} \frac{\partial S}{\partial \theta} = \frac{\partial S_{\mathsf{cl}}}{\partial \theta} + \tilde{\ell}'(\Theta_{\mathsf{cl}})\frac{\partial \Theta_{\mathsf{cl}}}{\partial \theta} = 
	\frac{d(s, S_{\mathsf{cl}})}{\sin \Theta_{\mathsf{cl}}} + \tilde{\ell}'(\Theta_{\mathsf{cl}})\frac{\kappa(S_{\mathsf{cl}})d(s, S_{\mathsf{cl}}) - \sin \Theta_{\mathsf{cl}}}{\sin \Theta_{\mathsf{cl}}},
	\ee
	or equivalently, it can be reformulated as follows.
	\begin{prop} If for all $(s,\theta)\in S^1 \times (0,\pi)$
	\be \label{twist-condition}
	d(s, S_{\mathsf{cl}}) + \tilde{\ell}'(\Theta_{\mathsf{cl}})(\kappa(S_{\mathsf{cl}})d(s, S_{\mathsf{cl}}) - \sin \Theta_{\mathsf{cl}}) \overset{(<)}{>} 0,
	\ee
	then the pensive billiard map is a right (left) twist map.
	\end{prop}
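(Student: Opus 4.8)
The plan is to read the conclusion directly off the explicit Jacobian computation displayed just above the statement; the only genuine work is the bookkeeping needed to match our setting to the definition of a twist map. First I would observe that, since $D$ is convex, the formula for $\nabla\mathsf{CB}$ quoted from \cite{Katok_1986} applies at every point, $\mathsf{PB}=\mathsf{Sh}\circ\mathsf{CB}$ is a diffeomorphism of the cylinder $M$ with coordinates $(s,\theta)$, $s\in\R/2L\Z$ and $\theta\in(0,\pi)$, and $\Theta_{\mathsf{cl}}\in(0,\pi)$ everywhere. The orientation-preserving monotone rescaling $(s,\theta)\mapsto(s/2L,\theta/\pi)$ identifies $M$ with $S^1\times(0,1)$, and passing to the lift $\widetilde{\mathsf{PB}}$ of $\mathsf{PB}$ to $\R\times(0,\pi)$ does not change local derivatives; hence the defining inequality $\partial_2F_1>0$ (resp. $<0$) for a right (resp. left) twist map is equivalent to $\partial S/\partial\theta>0$ (resp. $<0$) on all of $S^1\times(0,\pi)$.

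Next I would differentiate the defining relation $S=S_{\mathsf{cl}}+\tilde\ell(\Theta_{\mathsf{cl}})$ by the chain rule,
$$
\frac{\partial S}{\partial\theta}=\frac{\partial S_{\mathsf{cl}}}{\partial\theta}+\tilde\ell'(\Theta_{\mathsf{cl}})\,\frac{\partial\Theta_{\mathsf{cl}}}{\partial\theta},
$$
and substitute the second-column entries of the matrix $\nabla\mathsf{CB}(s,\theta)$ recorded above, namely $\partial S_{\mathsf{cl}}/\partial\theta=d(s,S_{\mathsf{cl}})/\sin\Theta_{\mathsf{cl}}$ and $\partial\Theta_{\mathsf{cl}}/\partial\theta=(\kappa(S_{\mathsf{cl}})d(s,S_{\mathsf{cl}})-\sin\Theta_{\mathsf{cl}})/\sin\Theta_{\mathsf{cl}}$. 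This yields
$$
\frac{\partial S}{\partial\theta}=\frac{1}{\sin\Theta_{\mathsf{cl}}}\Bigl(d(s,S_{\mathsf{cl}})+\tilde\ell'(\Theta_{\mathsf{cl}})\bigl(\kappa(S_{\mathsf{cl}})d(s,S_{\mathsf{cl}})-\sin\Theta_{\mathsf{cl}}\bigr)\Bigr).
$$
Since $\Theta_{\mathsf{cl}}\in(0,\pi)$ we have $\sin\Theta_{\mathsf{cl}}>0$, so $\partial S/\partial\theta$ has the same sign as the bracketed quantity. Therefore the hypothesis \eqref{twist-condition}, that this quantity is $>0$ (resp. $<0$) for all $(s,\theta)$, is precisely the right (resp. left) twist inequality, which proves the proposition.

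There is no analytic obstacle here: the entire content sits in the classical-billiard Jacobian formula, which we take as given. The one thing to be careful about is the normalization built into the twist-map definition (circle of circumference $1$ and fiber $(0,1)$, versus our $\R/2L\Z$ and $(0,\pi)$) together with the passage to the lift, but since every identification involved is an orientation-preserving monotone reparametrization it cannot change a sign, so the reduction to computing $\operatorname{sgn}(\partial S/\partial\theta)$ is legitimate. I would also add a one-line remark that in $(s,p)$ coordinates, because $p=\cos\theta$, one has $\partial S/\partial p=-(\sin\theta)^{-1}\partial S/\partial\theta$, so $\partial S/\partial p$ vanishes exactly when $\partial S/\partial\theta$ does; this is why global transitivity is equivalent to the (strict) twist condition, as asserted after the definition of transitivity.
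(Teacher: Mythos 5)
Your argument is correct and coincides with the paper's own derivation: the text immediately preceding the proposition differentiates $S=S_{\mathsf{cl}}+\tilde\ell(\Theta_{\mathsf{cl}})$, inserts the same second-column entries of $\nabla\mathsf{CB}$ from \cite{Katok_1986}, and uses $\sin\Theta_{\mathsf{cl}}>0$ to read off the sign, exactly as you do. Your extra remarks on the lift/normalization and the $(s,p)$ versus $(s,\theta)$ sign bookkeeping are harmless clarifications, not a different method.
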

We remark that the twist property is by no means automatic. The following counterexample shows that for any given non-constant delay function $\ell$, the pensive billiard on a sufficiently thin ellipse is not a twist map:
\begin{example}
	Let $D$ be an ellipse with axis lengths $\ve$ and $1/\ve$ where $\ve$ is sufficiently small (see Figure \ref{fignotwist}). Let $\theta_0$ be such that $\tilde{\ell}'(\theta_0) \neq 0$. 
	We claim that $\partial S /\partial \theta$  changes sign and hence the corresponding pensive billiard is not a twist.
\begin{figure}[htb]\centering
	\includegraphics[width=.7\columnwidth]{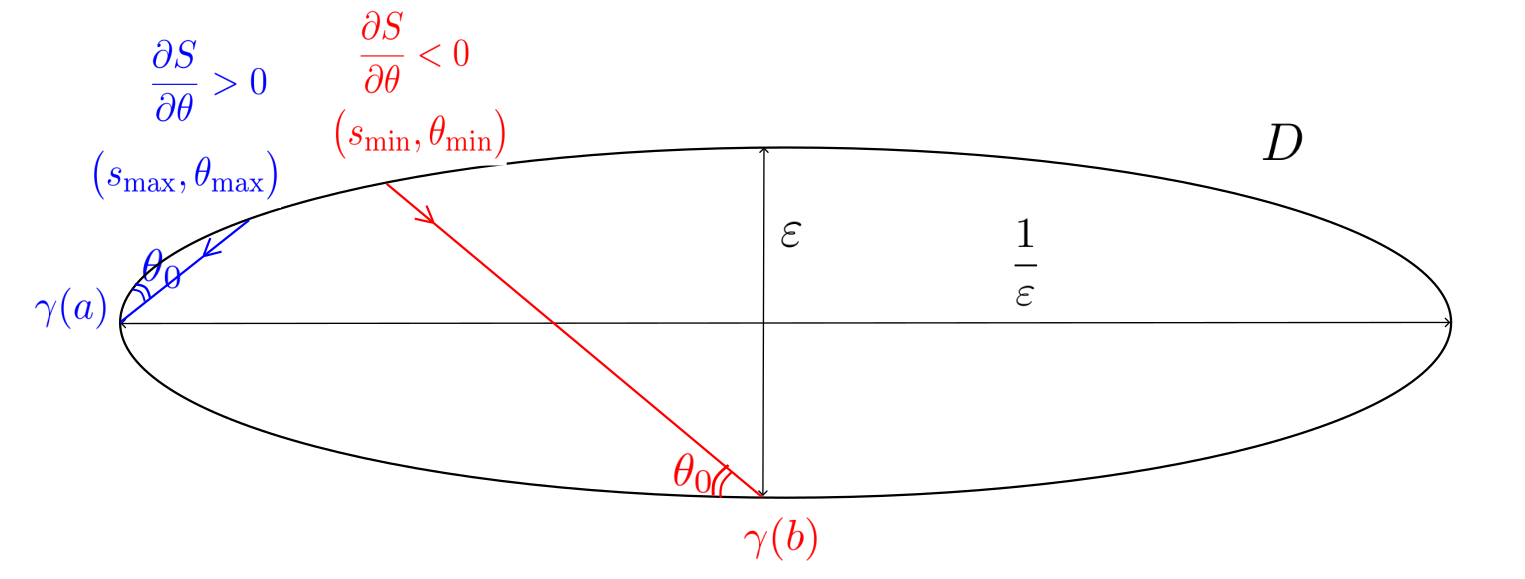} 
	\caption{Pensive billiard map is not a twist map on a sufficiently thin ellipse. In the figure, $\tilde{\ell}'(\theta_0) > 0$ .}
	\label{fignotwist}
\end{figure}

{\rm 	Indeed, compute $\partial S /\partial \theta$ at points $(s_{\rm min/ max}, \theta_{\rm min/ max})$ from which the classical billiard hits at angle $\theta_0$ the ellipse vertices, i.e. the points $\gamma(a)$ and $\gamma(b)$ of minimal and maximal curvature respectively.
	From the proposition above (as well as a  straightforward computation), these derivatives are given by
	\be
	\begin{aligned}
	\frac{\partial S}{\partial \theta}\big(s_{\rm min}, \theta_{\rm min}\big) &= \frac{d(s_{\rm min}, a) + \tilde{\ell}'(\theta_0)(\kappa(a)d(s_{\rm min}, a) - \sin \theta_0)}{\sin\theta_0} \\
	&= \frac{1}{\sin\theta_0}\left(\frac{2\ve\sin\theta_0}{\sin^2\theta_0 + \ve^4\cos^2\theta_0} + \tilde{\ell}'(\theta_0)\bigg(\ve^3\frac{2\ve\sin\theta_0}{\sin^2\theta_0 + \ve^4\cos^2\theta_0} -\sin\theta_0\bigg)\right)\\
	&= -\tilde{\ell}'(\theta_0) + O(\ve),
	\end{aligned}
	\ee
	and
	\be
		\begin{aligned}
		\frac{\partial S}{\partial \theta}\big(s_{\rm max}, \theta_{\rm max}\big) &= \frac{d(s_{\rm max}, b) + \tilde{\ell}'(\theta_0)(\kappa(b)d(s_{\rm max}, b) - \sin \theta_0)}{\sin\theta_0} \\
		&= \frac{1}{\sin\theta_0}\left(\frac{2\ve^3\sin\theta_0}{\cos^2\theta_0 + \ve^4\sin^2\theta_0} + \tilde{\ell}'(\theta_0)\bigg(\frac{1}{\ve^3}\frac{2\ve^3\sin\theta_0}{\cos^2\theta_0 + \ve^4\sin^2\theta_0} -\sin\theta_0\bigg)\right)\\
		&=\tilde{ \ell}'(\theta_0)\bigg(\frac{2}{\cos^2\theta_0} - 1\bigg) + O(\ve).
	\end{aligned}
	\ee
	Consequently, for sufficiently small $\ve$, $\partial S /\partial \theta$ changes sign, so the twist condition is not satisfied. 
	
	This calculation was done with the assumption of a fixed delay function and varying domain, so it does not immediately apply to vortex billiards (for which the delay function is predetermined by the domain's perimeter). However, the same counterexample works in the case of vortex billiard as well, since for any fixed $\theta_0$,  the derivative $\tilde{\ell}'(\theta_0) = L \frac{\sin \theta_0}{(1 + \cos^2\theta_0)^{3/2}} \sim \ve^{-1}$ still dominates $O(\ve)$ term for small $\ve$.
	}
\end{example}
\begin{example}
	For a disk of radius $R$ the condition is simplified to
	\be
	2R\sin \Theta_{\mathsf{cl}} + \tilde{\ell}'(\Theta_{\mathsf{cl}}) \left(\frac{1}{R}\cdot 2R\sin\Theta_{\mathsf{cl}} - \sin\Theta_{\mathsf{cl}}\right) \overset{(<)}{>} 0 \quad\Longleftrightarrow\quad \tilde{\ell}' \overset{(<)}{>} - 2R.
	\ee
	For the puck billiard's delay function $\tilde{\ell}(\theta) = h \cot \theta$, the right twist condition is never satisfied unless $h = 0$, while the left twist condition becomes 
	$
	h > 2R.
$
	Heuristically, this means that for sufficiently tall puck, as the angle of incidence grows, the decrease of the shift along the boundary dominates the increase of the arc-length coordinate due to the classical billiard motion.
\end{example}
These examples motivate the following theorem, which says that a large class of pensive billiards on domains with controlled curvature are twist maps:
\begin{theorem}\label{comparison_thm}
	Let $D$ be a smooth convex domain with the boundary curvature $\kappa \in [1/R, 1/r]$ for some 
	$0<\frac{R}{2}<r<R$. Then any pensive billiard on $D$ with the delay function $\ell$ satisfying 
	\begin{itemize}
		\item$\tilde{\ell}' > -\frac{2r}{2({R}/{r})-1}$ is a right twist map,
		\item  $\tilde{\ell}' < -\frac{2R}{2({r}/{R})-1}$ is a left twist map. 
		\end{itemize}
		In particular, the
		\begin{itemize}
	\item vortex billiard is a right twist map,
	\item sufficiently tall puck  billiard is a left twist map. 
\end{itemize}
\end{theorem}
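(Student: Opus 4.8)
The plan is to reduce the whole theorem to one uniform estimate for the \emph{classical} billiard on $D$, and then read off the two thresholds by elementary bookkeeping. First, dividing the twist criterion \eqref{twist-condition} by $d(s,S_{\mathsf{cl}})>0$, one sees that the pensive billiard on $D$ is a right (respectively left) twist map precisely when
\be\label{eq:w-def}
1+\tilde\ell'(\Theta_{\mathsf{cl}})\,w(s,\theta)>0\qquad(\text{respectively }<0)\quad\text{for all }(s,\theta),\qquad w:=\kappa(S_{\mathsf{cl}})-\frac{\sin\Theta_{\mathsf{cl}}}{d(s,S_{\mathsf{cl}})},
\ee
with $S_{\mathsf{cl}},\Theta_{\mathsf{cl}},d$ the quantities attached to the classical trajectory issued from $(s,\theta)$. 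Thus the theorem follows once one proves the key estimate: for $D$ convex with $\kappa\in[1/R,1/r]$ and $R/2<r<R$,
\be\label{eq:w-bounds}
\frac{2r-R}{2R^2}\ \le\ w(s,\theta)\ \le\ \frac{2R-r}{2r^2}\qquad\text{for all }(s,\theta),
\ee
the left-hand bound being strictly positive \emph{exactly} because $R<2r$. (On the round disk of radius $\rho$ a direct computation gives $w\equiv\tfrac1{2\rho}$, so both inequalities in \eqref{eq:w-bounds} become equalities, recovering the disk thresholds $\tilde\ell'\gtrless-2R$ of the examples above.)

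Granting \eqref{eq:w-bounds}, I would finish as follows. If $\tilde\ell'\ge0$ then $1+\tilde\ell'w\ge1>0$ since $w>0$, so the map is a right twist; if $\tilde\ell'<0$, then $1+\tilde\ell'w\ge1+\tilde\ell'\cdot\tfrac{2R-r}{2r^2}$, which is positive exactly when $\tilde\ell'>-\tfrac{2r^2}{2R-r}=-\tfrac{2r}{2(R/r)-1}$; this gives the right-twist clause. Symmetrically, for $\tilde\ell'<0$ one has $1+\tilde\ell'w\le1+\tilde\ell'\cdot\tfrac{2r-R}{2R^2}$, which is negative once $\tilde\ell'<-\tfrac{2R^2}{2r-R}=-\tfrac{2R}{2(r/R)-1}$; this is the left-twist clause. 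For the two corollaries: the vortex delay $\ell(p)=L(1-p/\sqrt{1+p^2})$ gives $\tilde\ell'(\theta)=L\sin\theta\,(1+\cos^2\theta)^{-3/2}>0$ on $(0,\pi)$, so the right-twist clause applies with nothing further to check; and for the (generalized) puck, $\tilde\ell'$ is bounded above uniformly in $\theta$ by a constant that tends to $-\infty$ with the height $h$ (for the puck itself $\tilde\ell'(\theta)=-h/\sin^2\theta\le-h$), so for $h$ large it lies below $-\tfrac{2R}{2(r/R)-1}$ everywhere and the left-twist clause applies.

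It remains to prove the key estimate \eqref{eq:w-bounds}, which is the substantive part. I would parametrize the boundary arc joining the two impact points by its tangent angle $u$, measured relative to the chord and normalized so that the trajectory leaves at $u=-\theta$, lands at $u=\Theta_{\mathsf{cl}}$, and $\kappa(S_{\mathsf{cl}})$ is the curvature at $u=\Theta_{\mathsf{cl}}$. Since $ds=du/\kappa$,
\be
d(s,S_{\mathsf{cl}})=\int_{-\theta}^{\Theta_{\mathsf{cl}}}\frac{\cos u}{\kappa(u)}\,du,\qquad \int_{-\theta}^{\Theta_{\mathsf{cl}}}\frac{\sin u}{\kappa(u)}\,du=0,\qquad \sin\Theta_{\mathsf{cl}}=\int_0^{\Theta_{\mathsf{cl}}}\cos u\,du,
\ee
the second identity saying that both impact points lie on the chord. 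Comparing that identity, and its analogue written on the complementary boundary arc, against $\kappa\in[1/R,1/r]$ yields $\sqrt{r/R}\le\sin(\theta/2)/\sin(\Theta_{\mathsf{cl}}/2)\le\sqrt{R/r}$ together with the same bounds for the cosines of the half-angles, hence $r/R\le\sin\theta/\sin\Theta_{\mathsf{cl}}\le R/r$; in particular, once $R<2r$, the angles $\theta$ and $\Theta_{\mathsf{cl}}$ stay comparable. From the first identity and $\kappa\in[1/R,1/r]$ one obtains the chord bound $r(\sin\theta+\sin\Theta_{\mathsf{cl}})\le d(s,S_{\mathsf{cl}})\le R(\sin\theta+\sin\Theta_{\mathsf{cl}})$. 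Feeding this bound, the bounds $1/R\le\kappa(S_{\mathsf{cl}})\le1/r$, and the bound on $\sin\theta/\sin\Theta_{\mathsf{cl}}$ into the definition of $w$ reduces \eqref{eq:w-bounds} to a pair of polynomial inequalities in $t:=R/r\in(1,2)$ which vanish at $t=1$ and factor accordingly --- for instance the upper bound in \eqref{eq:w-bounds} amounts to $(t-1)(2t^2+t-2)\ge0$ --- and so hold on the whole range.

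The step I expect to be the main obstacle is establishing the chord bound (and the half-angle comparisons) in the near-grazing regime, where $\theta$ or $\Theta_{\mathsf{cl}}$ approaches $0$ or $\pi$ and $\cos u$ is no longer of one sign on $[-\theta,\Theta_{\mathsf{cl}}]$: there one must integrate over the complementary boundary arc, on which the relevant integrand $-\cos u$ has fixed sign, before invoking the curvature bounds, and in the leftover ``mixed'' range one splits the arc at the zero of $\cos u$ and uses that $R<2r$ already forces both $\theta$ and $\Theta_{\mathsf{cl}}$ to stay bounded away from the grazing values. Getting those estimates clean is where essentially all the work lies; the rest is the bookkeeping above.
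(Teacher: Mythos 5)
Your reduction of the theorem to a uniform two-sided bound on $w=\kappa(S_{\mathsf{cl}})-\sin\Theta_{\mathsf{cl}}/d(s,S_{\mathsf{cl}})$, and the threshold bookkeeping that follows (including the vortex and puck specializations), is sound and is essentially the same bookkeeping as in the paper, which works with $d+\tilde\ell'\,(\kappa(S_{\mathsf{cl}})d-\sin\Theta_{\mathsf{cl}})$ instead of dividing \eqref{twist-condition} by $d$. The genuine gap is in the substantive step: your chord estimate $r(\sin\theta+\sin\Theta_{\mathsf{cl}})\le d\le R(\sin\theta+\sin\Theta_{\mathsf{cl}})$ is only derived when $\cos u$ keeps a fixed sign on the arc you integrate over, i.e.\ when $\theta$ and $\Theta_{\mathsf{cl}}$ are both acute (direct arc) or both obtuse (complementary arc). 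Your handling of the remaining mixed case rests on the assertion that $R<2r$ forces $\theta,\Theta_{\mathsf{cl}}$ to stay away from grazing values; that is false: every convex table has chords with both angles arbitrarily close to $0$ and to $\pi$, and the curvature pinching only gives $\sin\theta/\sin\Theta_{\mathsf{cl}}\in[r/R,R/r]$, which does not exclude one angle acute and the other obtuse (e.g.\ $\theta$ and $\pi-\theta$ have equal sines). So the key estimate is left unproven exactly in the regime you yourself flag as the main obstacle, and the twist condition must be verified on all of $S^1\times(0,\pi)$.

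The paper closes precisely this step by quoting the Blaschke rolling theorem (an ODE comparison argument): $\kappa\in[1/R,1/r]$ implies $\partial D$ lies between the circles of radii $r$ and $R$ tangent to it at $\gamma(S_{\mathsf{cl}})$, and since a chord meeting a circle of radius $\rho$ at angle $\Theta_{\mathsf{cl}}$ has length $2\rho\sin\Theta_{\mathsf{cl}}$, this yields $2r\sin\Theta_{\mathsf{cl}}\le d\le 2R\sin\Theta_{\mathsf{cl}}$ with no case analysis on the angles. Combined with $\kappa(S_{\mathsf{cl}})\in[1/R,1/r]$ this gives $w\in\big[\tfrac{2r-R}{2rR},\tfrac{2R-r}{2rR}\big]$, which is even slightly stronger than the bounds you postulate, and your concluding computation (and the vortex/puck conclusions) then goes through verbatim. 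With that substitution your argument becomes a correct proof; as written, the tangent-angle derivation does not cover all of phase space.
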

\begin{proof}
	From an ODE comparison argument that can be found e.g. in \cite[Part 4, \S24]{blaschke1916kreis}, condition $1/R \leq \kappa \leq 1/r$ implies that $\partial D$ is contained in the region between two circles of radii $r$ and $R$ tangent to it at $S_{\mathsf{cl}}$ (see illustration in Figure \ref{comparison_fig}). 
	
\begin{figure}[htb]\centering
	\includegraphics[width=.45\columnwidth]{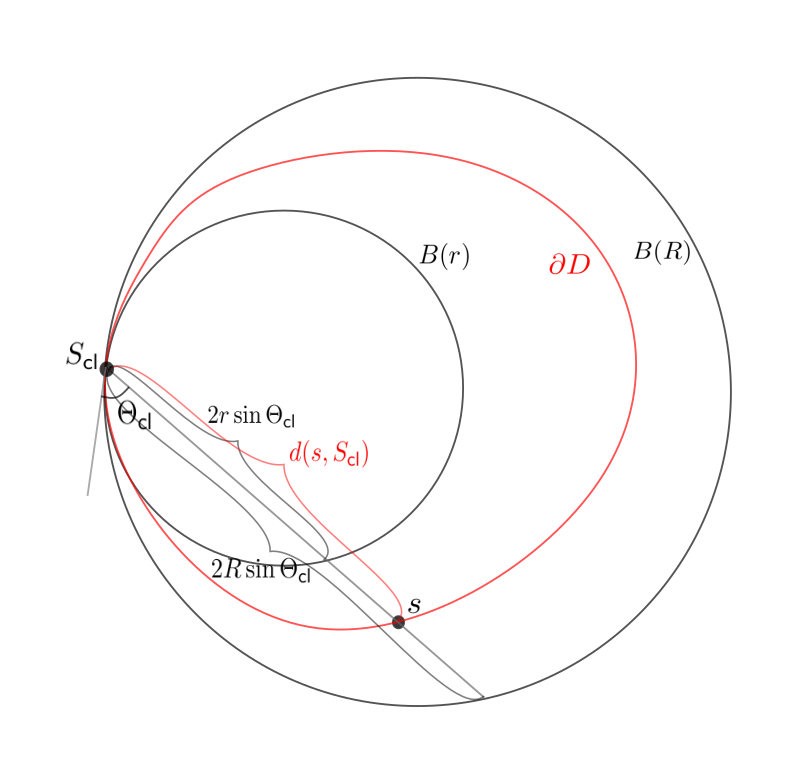} 
	\caption{A curve with curvature bounded between $\frac{1}{R}$ and $\frac{1}{r}$ lies between circles of radii $r$ and $R$ tangent to it.}
	\label{comparison_fig}
\end{figure}

This, in turn, implies that
	\be
	2r\sin\Theta_{\mathsf{cl}} \leq d(s, S_{\mathsf{cl}}) \leq 2R\sin\Theta_{\mathsf{cl}}\quad \Rightarrow \quad  2\frac{r}{R}\sin\Theta_{\mathsf{cl}} \leq \kappa(S_{\mathsf{cl}}) d(s, S_{\mathsf{cl}}) \leq 2\frac{R}{r}\sin\Theta_{\mathsf{cl}}.
	\ee
	Consequently, if $\tilde{\ell}' > -\frac{2r}{2({R}/{r})-1}$
	\be
	d(s, S_{\mathsf{cl}}) + \tilde{\ell}'(\Theta_{\mathsf{cl}})(\kappa(S_{\mathsf{cl}})d(s, S_{\mathsf{cl}}) - \sin \Theta_{\mathsf{cl}}) > 2r\sin\Theta_{\mathsf{cl}} -  \frac{2r}{2\frac{R}{r}-1}\cdot(2\frac{R}{r}-1)\sin\Theta_{\mathsf{cl}} = 0,
	\ee
	so \eqref{twist-condition} implies that $\mathsf{PB}$ is a right twist map. Analogously, if $\tilde{\ell}' < -\frac{2R}{2\frac{r}{R}-1}$
	\be
	d(s, S_{\mathsf{cl}}) + \tilde{\ell}'(\Theta_{\mathsf{cl}})(\kappa(S_{\mathsf{cl}})d(s, S_{\mathsf{cl}}) - \sin \Theta_{\mathsf{cl}}) < 2R\sin\Theta_{\mathsf{cl}} -  \frac{2R}{2\frac{r}{R}-1}\cdot(2\frac{r}{R}-1)\sin\Theta_{\mathsf{cl}} = 0,
	\ee
	so $\mathsf{PB}$ is a left twist map.
	
	Now we verify these conditions on  $\tilde{\ell}'$ for the puck and vortex billiards. 
	
	For the puck billiard with delay function $\tilde{\ell}(\theta) = h\cot\theta$ we have
$
 	\tilde{\ell}'(\theta) = -\frac{h}{\sin^2\theta} \in (-\infty, -h),
$
 	so if $h > \frac{2R}{2({r}/{R})-1}$, the condition of Theorem \ref{comparison_thm} on $\tilde{\ell}'$ is satisfied, and the  puck billiard  is a left twist map.
 	For vortex billiard one has $\tilde{\ell}(\theta) = L\big(1 - \frac{\cos\theta}{\sqrt{1 + \cos^2\theta}}\big)$, which implies that
 	$\tilde{\ell}'(\theta) = L\frac{\sin\theta}{(1 + \cos^2\theta)^{3/2}} > 0 > -\frac{2r}{2({R}/{r})-1}$
 	and, therefore, the vortex billiard map is a right twist map.
\end{proof}

\begin{remark}[Generating function via Twist condition]
  It is known (see \cite[Chapter 9.3]{Katok_Hasselblatt} for an excellent exposition) that for any area-preserving twist map $F: S^1 \times (0,1) \rightarrow F: S^1 \times (0,1)$, one can construct a generating function (i.e. a function satisfying \eqref{pensive-ham}) by setting $H(s, S)$ to be the area bounded between curves $\{S\} \times (0,1)$ and $F(\{S\} \times (0,1))$. In cases when the pensive billiard map is a twist map, this procedure gives the same generating function \eqref{genfun}.  
\end{remark}

Many other implications of the twist property can be found in \cite[Chapter 9.3, 13]{Katok_Hasselblatt}. As a sample of such applications we specialize the Birkhoff theorem on periodic orbits to pensive billiards.

\subsection{Examples of dynamical behaviors} 
We now give three examples where dynamical properties of the pensive billiard system can generally be derived from the classical results.

\subsubsection{Existence of periodic orbits} Recall that a  Birkhoff periodic orbit of type $(p,q)$ is a periodic orbit with period $q$ and rotation number $p/q$, along with some additional properties - see Definition 9.3.6 in  \cite{Katok_Hasselblatt}. We have
\begin{theorem}
	Let $D$ be a smooth convex billiard table with the boundary curvature in between $1/R$ and $1/r$ for some $0 < R/2 < r < R$. Then 
	\begin{itemize}
		\item there exists $h_0 = h_0(r,R)$ such that for any $h > h_0$, and any $p/q \in \mathbb{Q}$ with $p$ and $q$ relatively prime, puck billiard of height $h$ in $D$ has at least two Birkhoff periodic orbits of type $(p,q)$.
		\item for any $p/q \in \big(\frac{1}{2} - \frac{1}{2\sqrt{2}} ,\frac{3}{2} +\frac{1}{2\sqrt{2}} \big) \cap \mathbb{Q}$  with $p$ and $q$ relatively prime, the vortex billiard in $D$ has at least two Birkhoff periodic orbits of type $(p,q)$.
	\end{itemize} 
\end{theorem}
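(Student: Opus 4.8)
The plan is to derive the theorem from the Birkhoff theorem on periodic orbits of area-preserving monotone twist maps (see \cite[Chapter~9.3]{Katok_Hasselblatt}), in the form which, given such a map, produces for every rational $p/q$ in lowest terms lying in the rotation interval of the map a Birkhoff periodic orbit of type $(p,q)$. Two inputs are needed: the twist property and the rotation interval. The first is already in hand: the opening theorem of the paper shows $\mathsf{PB}$ preserves $\omega$ for any smooth delay, and Theorem~\ref{comparison_thm} shows that, under the pinch $\kappa\in[1/R,1/r]$ with $R/2<r<R$, the puck billiard of height $h>h_0(r,R)$ is a left twist map — with $h_0=\tfrac{2R}{2(r/R)-1}=\tfrac{2R^2}{2r-R}$, the threshold coming from $\sup_\theta\tilde\ell'(\theta)=-h$ for $\tilde\ell(\theta)=h\cot\theta$ — while the vortex billiard is a right twist map (Birkhoff's statement applies equally to left twist maps, e.g.\ after conjugating by $s\mapsto -s$). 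So only the rotation interval remains to be computed.

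For that I would study $\mathsf{PB}=\mathsf{Sh}\circ\mathsf{CB}$ near the two ends $\theta\to 0^+$ and $\theta\to\pi^-$ of the annulus $M=S^1\times(0,\pi)$. Recall the classical fact (e.g.\ \cite{Tab}) that the billiard map in a smooth convex domain has boundary rotation numbers $0$ and $1$: near grazing the incidence angle is nearly preserved, $\Theta_{\mathsf{cl}}\to\theta$, and the arc advance per reflection, $d(s,S_{\mathsf{cl}})/\sin\Theta_{\mathsf{cl}}+o(1)$, tends to $0$ as $\theta\to 0^+$ and to the full boundary length $2L$ as $\theta\to\pi^-$. Since the pensive step adds $\tilde\ell(\Theta_{\mathsf{cl}})$ to the classical arc advance, and $\tilde\ell(\Theta_{\mathsf{cl}})\to\tilde\ell(0^+)$, resp.\ $\tilde\ell(\pi^-)$, at the two ends, the boundary rotation numbers of $\mathsf{PB}$ are
\[
\rho_0=\frac{\tilde\ell(0^+)}{2L},\qquad \rho_\pi=1+\frac{\tilde\ell(\pi^-)}{2L},
\]
read as limits, possibly infinite. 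Plugging in the delay functions: for the puck, $\tilde\ell(\theta)=h\cot\theta$ gives $\tilde\ell(0^+)=+\infty$, $\tilde\ell(\pi^-)=-\infty$, so the rotation interval is all of $\mathbb{R}$ and Birkhoff yields an orbit of every type $(p,q)$ — the hypothesis $h>h_0(r,R)$ serving only to secure the twist property; for the vortex, $\tilde\ell(\theta)=L\bigl(1-\cos\theta/\sqrt{1+\cos^2\theta}\,\bigr)$ is bounded, with $\tilde\ell(0^+)=L(1-\tfrac1{\sqrt2})$ and $\tilde\ell(\pi^-)=L(1+\tfrac1{\sqrt2})$, so $\rho_0=\tfrac12-\tfrac1{2\sqrt2}$ and $\rho_\pi=\tfrac32+\tfrac1{2\sqrt2}$, exactly the claimed interval.

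The main obstacle is to make the rotation-interval step rigorous, since $\mathsf{PB}$ acts on the \emph{open} annulus and preserves no compact sub-annulus $S^1\times[\theta_-,\theta_+]$, so the closed-annulus Poincar\'e--Birkhoff statement does not apply verbatim. I would handle this either through a version of Birkhoff's theorem for monotone twist maps of the open annulus, or — more self-contained — through its variational counterpart: with the explicit generating function $H$ of \eqref{genfun}, a Birkhoff $(p,q)$-orbit is a critical (indeed minimizing) configuration of the periodic action $\sum_i H(s_i,s_{i+1})$ in the winding class determined by $p/q$, and existence follows by minimization once the required coercivity and the non-emptiness of the admissible winding class are read off from the twist condition and the asymptotics of $H$ near the diagonal. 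For the puck there is the extra wrinkle that $\tilde\ell$, hence the twist, blows up at both ends; this is dealt with by working on $S^1\times[\theta_-,\theta_+]$ and checking that, for $\theta_\pm$ close enough to $0$ and $\pi$, the boundary inequalities needed to run the argument on the relevant iterate hold for the prescribed $(p,q)$. The remaining pieces — the near-grazing asymptotics of the classical billiard and the elementary arithmetic of $\rho_0,\rho_\pi$ — are routine.
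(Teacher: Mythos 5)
Your proposal follows essentially the same route as the paper: establish the twist property via Theorem \ref{comparison_thm} (with the same threshold $h_0=\tfrac{2R}{2(r/R)-1}$ for the puck), then invoke the Birkhoff theorem on the twist (rotation) interval, computed from the limits of the normalized arc advance as $\theta\to 0^+$ and $\theta\to\pi^-$, yielding $(-\infty,\infty)$ for the puck and $\bigl(\tfrac12-\tfrac1{2\sqrt2},\ \tfrac32+\tfrac1{2\sqrt2}\bigr)$ for the vortex. Your additional care about the open-annulus/boundary-rotation-number issue is a refinement of a step the paper treats as immediate, not a different argument.
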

 \begin{proof} In view of Theorem \ref{comparison_thm}, the proof is an application of the Birkhoff theorem: such orbits exists on  twist intervals. Hence one needs to find the corresponding twist intervals for the puck and vortex billiards. For the puck billiard  its twist interval is 
 	\be(\lim_{\theta\rightarrow \pi}S, \lim_{\theta\rightarrow 0}S) = (1 + \frac{1}{2L}\lim_{\theta\rightarrow \pi}\tilde{\ell}(\theta), \frac{1}{2L}\lim_{\theta\rightarrow 0}\tilde{\ell}(\theta)) = (-\infty, \infty),
 	\ee while for the vortex billiard its twist interval is 
 	\be
 	(\lim_{\theta\rightarrow 0}S, \lim_{\theta\rightarrow \pi}S) = (\frac{1}{2L}\lim_{\theta\rightarrow 0}\tilde{\ell}(\theta), 1 + \frac{1}{2L}\lim_{\theta\rightarrow \pi}\tilde{\ell}(\theta)) = ( \frac{1}{2}(1 - \frac{1}{\sqrt{2}}), 1 + \frac{1}{2}(1 + \frac{1}{\sqrt{2}}))\,,
 	\ee
	and the statement follows.
 \end{proof}
\subsubsection{Caustics on the disk}\label{disk_example}
As the simplest example, let us  analyze the case of $D = B_1(0)$. The simplification comes from the fact that the angle  $\theta$ at which the point leaves the boundary is the same as the angle at which it hits next. Consequently, the phase space of any pensive billiard in the disk splits into invariant sets $M_\theta = \gamma \times \{\theta\}$, and the restriction of $\mathsf{PB}$ to each $M_\theta$ is a circle rotation by angle  
\be \label{disk-pb}
\tilde{\theta} = 2\theta + \tilde{\ell}(\theta).
\ee 


Hence any result for a standard billiard in a disk can be adapted to the pensive billiard case.  The induced transformation on this invariant circle is a rotation (depending also on the delay function $\tilde{\ell}(\theta)$)  through a fixed angle. As a consequence, we have, for example:

\begin{prop}\label{caustic}
Any nondegenerate pensive billiard in the disk has concentric circle caustics.
\end{prop}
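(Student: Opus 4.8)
The plan is to exploit the observation already recorded in \eqref{disk-pb}: on the unit disk the pensive billiard preserves each circle $M_\theta=\gamma\times\{\theta\}$, and restricted to $M_\theta$ it acts as the rigid rotation $s\mapsto s+\tilde\ell(\theta)+2\theta$ (in arclength, the classical billiard chord subtends a constant angle $2\theta$). The first step is to make precise what ``nondegenerate'' should mean here: I would take it to mean that the rotation amount $\rho(\theta):=2\theta+\tilde\ell(\theta)$ is not locally constant, equivalently $\rho'(\theta)=2+\tilde\ell'(\theta)\not\equiv 0$ on any interval; more simply one may assume $\rho$ is a nonconstant function of $\theta$, so that distinct circles $M_\theta$ are genuinely distinct invariant curves rather than collapsing the dynamics. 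Under this hypothesis the family $\{M_\theta\}_{\theta\in(0,\pi)}$ already provides a foliation of the phase cylinder $M$ by $\mathsf{PB}$-invariant circles.

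The second step is to translate ``invariant circle in phase space'' into ``caustic in the disk.'' Recall that a caustic of a billiard is a curve in $D$ to which every segment of the corresponding trajectory is tangent. For a trajectory on $M_\theta$, every chord makes the fixed angle $\theta$ with the boundary circle of radius $1$; an elementary Euclidean computation shows that such a chord stays at constant distance $\cos\theta$ from the center $0$, hence is tangent to the concentric circle of radius $\cos\theta$. (During the ``pensive'' phase the ball slides along $\gamma$ itself, which is the boundary and plays no role in the caustic discussion — the caustic is enveloped by the \emph{interior} chords, exactly as in the classical case.) Thus $M_\theta$ corresponds to the concentric circle caustic of radius $\cos\theta$, and letting $\theta$ range over $(0,\pi)$ — or equivalently $\cos\theta$ over $(-1,1)$ — we obtain a full one-parameter family of concentric circle caustics foliating the open disk minus the center. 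I would state the proposition's conclusion in exactly this form.

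The only genuinely delicate point is the nondegeneracy bookkeeping: without it the claim is false, as the paper's own example $\tilde\ell(\theta)=-2\theta$ shows ($\mathsf{PB}=\mathrm{id}$, every point lies on a ``periodic orbit'' and the caustic picture degenerates). So the main obstacle is not a hard computation but pinning down the hypothesis so that the statement is both true and matches the intended reading; I expect to resolve it by defining nondegeneracy as $2+\tilde\ell'\not\equiv 0$ and noting that then the invariant circles $M_\theta$ are non-trivial (the map is not the identity on an open set of them), which is all that is needed for the geometric conclusion. Everything else — the constancy of the subtended angle $2\theta$ for a chord of the unit circle meeting it at angle $\theta$, and the fact that a line at angle $\theta$ to a circle of radius $1$ is tangent to the concentric circle of radius $\cos\theta$ — is a one-line plane-geometry check that I would include but not belabor.
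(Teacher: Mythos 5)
Your argument is correct and is essentially the paper's own: the paper likewise derives the proposition directly from the splitting of the phase cylinder into the invariant circles $M_\theta$ on which $\mathsf{PB}$ acts as the rotation \eqref{disk-pb}, with each $M_\theta$ corresponding to the concentric caustic of radius $|\cos\theta|$ (note the absolute value for $\theta>\pi/2$). Your explicit discussion of what ``nondegenerate'' should mean, excluding cases like $\tilde\ell(\theta)=-2\theta$, is a reasonable reading consistent with the paper's remarks and adds a detail the paper leaves implicit.
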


We will return to this proposition in the context of vortex billiards in Section \ref{sect:numerics}.

\medskip


\subsubsection{Interval exchange maps for polygonal billiard tables}
As with the classical billiards, a rich source of examples is provided by polygonal billiard tables. Some statements translate literally, while some results cease to hold. 
An example of the former is the following. Consider a polygonal billiard table $D$ with angles $\{\frac{2\pi m_i}{n_i}\}$ where $\frac{m_i}{n_i}$ are irreducible fractions with $N = \text{lcm}(n_i)$. Then phase space of the pensive billiard splits into invariant subsets $M_c$, $0 \leq c \leq \pi/ N$, each of which are the products of $D$ and some finite sets (\cite{katok-iet}, \cite{zemlyakov-katok}). Indeed, starting the orbit with any angle $\theta_0$, only a finite number of angles $\theta_k \in \{\theta_0 + \frac{2\pi k}{N}\}$ are accessible, since the pensive billiard map modifies $\theta$ by a linear combination of the angles of $D$. Consequently, the restriction of pensive billiard map to $M_c$ can be realized as a \textit{flow over interval exchange transformation} (see Figure \ref{fig:triangle-iet} for an example), in a complete analogy with the procedure carried in \cite{katok-iet}.
The proof of the following theorem from \cite[Theorem 3]{katok-iet} then translates without modification.

\begin{theorem}
 Let $D$ be a polygon with angles commensurable with $\pi$. The restriction of the pensive billiard flow in $D $ to any manifold $M_c$ is not mixing.
\end{theorem}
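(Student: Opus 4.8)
The plan is to reduce the statement to Katok's theorem that a flow built over an interval exchange transformation with a roof function of bounded variation is never mixing \cite[Theorem 3]{katok-iet}, exactly as in the classical polygonal case \cite{zemlyakov-katok,katok-iet}. Concretely, I would first make explicit the representation of the pensive billiard flow on $M_c$ as such a special flow (this was sketched in the paragraph preceding the theorem), then verify that the roof function is of bounded variation, and finally invoke Katok's result.

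For the first step, recall that since $D$ has angles commensurable with $\pi$, along any trajectory lying on $M_c$ the direction takes only finitely many values $\theta_0+\tfrac{2\pi k}{N}$. After the standard unfolding of the polygon, the pensive flow on $M_c$ therefore becomes a straight-line flow in a fixed direction on a finite (translation) surface, punctuated by the extra displacement $\tilde{\ell}(\theta)$ along the sides. Choosing a transversal to the flow (a finite union of sides of the unfolded polygon) yields a first-return map which is an interval exchange transformation $T$ of an interval $I$, and the flow on $M_c$ is conjugate to the special flow $T^{r}$ with roof function $r\colon I\to\mathbb{R}_{>0}$ equal to the return time.

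The only place where the pensive modification enters is in checking that $r$ has bounded variation. The return time equals the classical free-path length between consecutive transversal crossings, to which the delay contributes the term $\tilde{\ell}(\theta)$ evaluated at the incidence angle. On $M_c$ this angle is locally constant — it ranges over the finite set $\{\theta_0+\tfrac{2\pi k}{N}\}$ — so the delay adds a piecewise-constant function with finitely many pieces, which does not affect bounded variation. Hence $r$ is of bounded variation (and, the flow being assumed well defined, bounded away from $0$ and $\infty$ away from the corners), exactly as for the classical polygonal billiard. With this, the theorem follows from \cite[Theorem 3]{katok-iet}: no flow over an interval exchange transformation with roof of bounded variation is mixing.

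I expect the main (and only genuine) obstacle to be the verification that the delay does not spoil the regularity hypotheses of Katok's theorem near the vertices of $D$, where the roof function has its singular behavior. But since the delay is piecewise constant, the behavior of $r$ near the corners is identical to that in the classical case, which is precisely the situation Katok's argument is designed to handle; this is the sense in which the proof of \cite[Theorem 3]{katok-iet} translates without modification.
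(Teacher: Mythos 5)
Your proposal takes essentially the same route as the paper: realize the restriction of the pensive flow to $M_c$ as a special flow over an interval exchange transformation (possible because only finitely many directions $\theta_0+\tfrac{2\pi k}{N}$ occur on $M_c$) and invoke Katok's theorem \cite{katok-iet} that such flows with roof function of bounded variation are never mixing. One small imprecision worth noting: the delay modifies not only the roof function but also the base return map itself, since it composes the classical interval exchange with a piecewise-constant translation along the boundary; but because the angle is constant on each piece, this composition is again an interval exchange transformation, so the argument goes through exactly as you (and the paper) claim.
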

\begin{figure}[htb]\centering
	\includegraphics[width=.9\columnwidth]{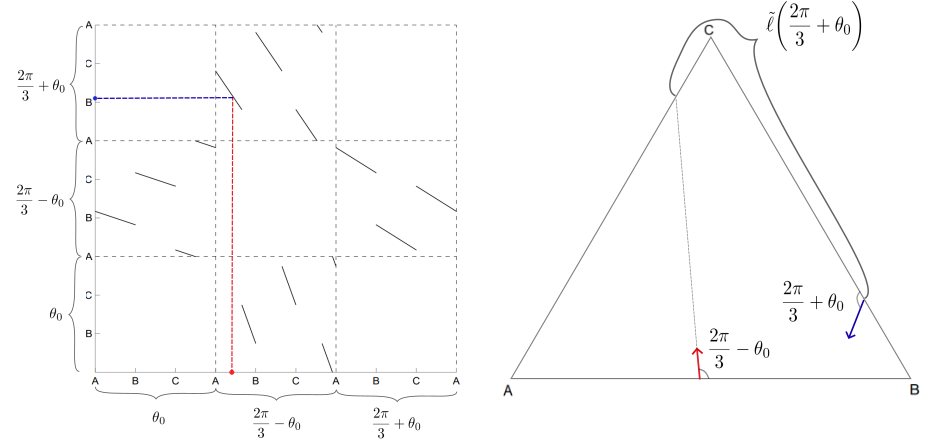} 
	\caption{Left: plot of realization of a pensive billiard on a polygon as an interval exchange transformation. Right: interpretation of the plot. The red vector corresponds to the red point on the plot, the blue vector is its image under $\mathsf{PB}$. Here $\tilde{\ell}(\theta)= \cot\theta$ and $\theta_0=\pi/9$. }
	\label{fig:triangle-iet}
\end{figure}

\newpage

\section{Generalized puck billiards}\label{sect:puck-billiard}
We start by generalizing the definition of a puck billiard by allowing general metrics on the side of the cylinder.
Let $\gamma$ be a smooth convex boundary of a plane domain $D$ as above,
and $N=\gamma   \times [0,h]$ be a cylinder of height $h$ over this curve with coordinates $(s, y)$. 

\begin{defi}
A {\it generalized puck billiard} in $D$ is the geodesic motion on the surface homeomorphic to a sphere formed by gluing the top and bottom copies of $D$ to the side cylinder $N$ with the metric on it  given by $ f(y)\,ds^2 + dy^2$, where $s$ is the arc-length parameter on $\gamma$, while $y\in [0,h]$ is the vertical coordinate on the cylinder, and $f \geq 1$ is a smooth function on $[0,h]$ satisfying  $f(0)=f(h) =1$. 
\end{defi}
 
We note that the requirements  imposed on the metric are natural if one wants to obtain a correspondence between geodesics and a pensive billiard. Indeed, if the metric were dependent on $s$ the corresponding delay function would be dependent on  both the point of incidence and the angle. Moreover, one can see from the proof of Theorem \ref{geodesiconside} that the restriction  $f \ge 1$ is necessary and sufficient for the geodesics starting at the top to reach the bottom.

The puck billiard above corresponds to the flat case
$f(y)\equiv 1$.

\begin{figure}[htb]\centering
    \includegraphics[width=.5\columnwidth]{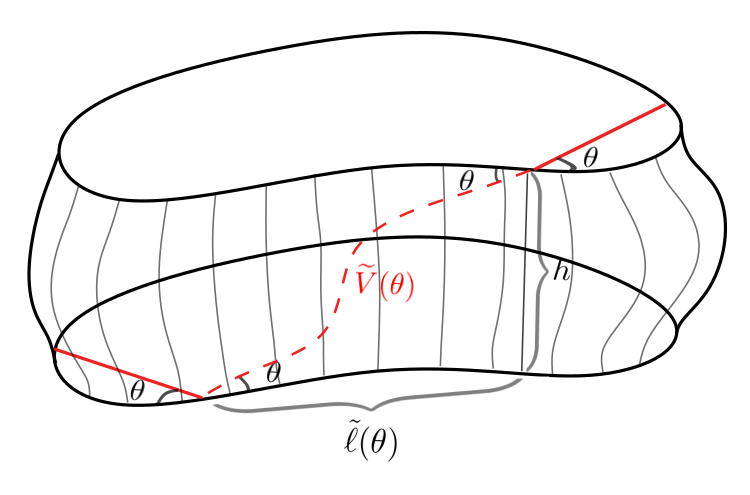} 
  \caption{Generalized Puck Billiard.}
\label{figgenpuck}
\end{figure}

\subsection{Length of geodesics on the side of a cylinder}
It turns out, the potential $V$ defined by Equation \ref{potential} has a natural meaning of the length of ``curvilinear hypotenuse''
in the case of a generalized puck billiard. Indeed, normalize the height of the cylinder: $h=1$.
Assume that a geodesic $\{\lambda(t)\}$ enters the side $N$ with the unit speed
$v_0 = p\partial_s + \sqrt{1 - p^2}\partial_y$, where,   as usual, $p=\cos\theta$, and it exits at the bottom after time $T$.

\begin{theorem}\label{geodesiconside}
For a cylinder with metric $ f(y)\,ds^2 + dy^2$ geodesic segment $\{\lambda(t)\}$ with initial velocity
$v_0 = p\partial_s + \sqrt{1 - p^2}\partial_y$ 
has length 
 	\be
 	\mathcal{V}(p) \big|^T_0 := \int_0^1 \frac{1}{\sqrt{1 - \frac{p^2}{f(y)}}} dy ,
	\ee
and it has horizontal shift (along $s$-coordinate)
	\be \label{Lpform}
	\ell (p)\big|^T_0  = \int_0^1 \frac{p}{f(y)}\frac{1}{\sqrt{1 - \frac{p^2}{f(y)}}}dy.
	\ee
In particular, $\mathcal{V}(p)$ is the potential for the delay function $\ell(p)$: $\mathcal{V}(p) = \int_0^{p}q\,\ell'({q})\,d{q}$.
\end{theorem}

\begin{proof}
	We observe that the corresponding Lagrangian $L(s, y, \dot{s}, \dot{y}, t) = \sqrt{f(y)\dot{s}^2 + \dot{y}^2}$ is invariant under shifts in variables $t$ and $s$. Hence from Noether's theorem $E = f(y)\dot{s}^2 + \dot{y}^2$ and $P = f(y)\dot{s}$ are conserved along the trajectory. Consequently, its length is given by 
	\be
	\mathcal{V}(p)\big|^T_0 = \int_0^T |\dot{\lambda}|dt = \int_0^1 \sqrt{E} \bigg(\frac{dy}{dt}\bigg)^{-1}dy =
	\int_0^1 \frac{\sqrt{E}}{\sqrt{E - \frac{P^2}{f(y)}}} dy.
	\ee
	Similarly, the horizontal shift is given by 
	\be
	\ell(p)\big|^T_0 = \int_0^T \frac{ds}{dt} dt = \int_0^1 \frac{ds}{dt} \bigg(\frac{dy}{dt}\bigg)^{-1} dy = \int_0^1 \frac{P}{f(y)}\frac{1}{\sqrt{E - \frac{P^2}{f(y)}}}dy.
	\ee
	Evaluating $E$ and $P$ at time $t = 0$ we get  $	E = 1$ and $ P  = p$, so that \eqref{Lpform} holds.

	To verify the relation between $\mathcal{V}$ and $\ell$ we compute
		\begin{align}
	\int_0^p q\ell'(q) \,dq &= p \ell(p) - \int_0^p \ell(q)\, dq \\
	&= \int_0^1\left(  \frac{p^2}{f(y)}\frac{1}{\sqrt{1 - \frac{p^2}{f(y)}}} + \sqrt{1 - \frac{p^2}{f(y)}}\right) dy =\int_0^1 \frac{1}{\sqrt{1 - \frac{p^2}{f(y)}}} dy = \mathcal{V}(p),
	\end{align}
	so $\mathcal V$ is the potential for the delay function $\ell$.
\end{proof}
\begin{cor}
	Suppose a pensive billiard with delay function $\ell$ is given. If there exists a function $f: [0,1] \to [1, \infty)$ satisfying the boundary conditions $f(0)= f(1) $ and the relation
$$
	\ell(p) = \int_0^1 \frac{p}{f(y)}\frac{1}{\sqrt{1 - \frac{p^2}{f(y)}}}dy\,
$$
	then  trajectories of the pensive billiard in $D$ naturally correspond  to geodesics on the closed cylinder $(D\times\{0\}) \cup (\partial D \times [0,1]) \cup(D\times\{1\})$  with the flat metric on the top and  bottom and the metric on the  sides given by $f(y)ds^2 + dy^2$.
\end{cor}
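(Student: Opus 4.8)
The plan is to observe that this corollary is essentially a restatement of the preceding theorem together with Theorem~\ref{thm:potential} on the variational principle, so the task is to assemble those pieces rather than to prove anything new. First I would invoke the theorem on the length of geodesics on the side of the cylinder: given the hypothesized $f\colon[0,1]\to[1,\infty)$ with $f(0)=f(1)$, that theorem shows that a geodesic entering the side $N=\partial D\times[0,1]$ at the top with unit velocity $v_0=p\,\partial_s+\sqrt{1-p^2}\,\partial_y$ exits at the bottom with the same $y$-component of velocity (hence the same angle $\theta$, since $P=f(y)\dot s$ is conserved and $f(0)=f(1)$ forces $\dot s$, and therefore $p=\cos\theta$, to be the same at entry and exit), having undergone horizontal shift exactly $\ell(p)$ and accumulated length $\mathcal V(p)=\int_0^p q\,\ell'(q)\,dq=V(p)$.

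Next I would describe how a geodesic on the glued surface decomposes: on each of the two flat copies of $D$ it is a straight segment, and on the side cylinder it is a geodesic of the form just analyzed. A geodesic that starts in the top copy of $D$ travels in a straight line until it reaches $\partial D=\gamma(s_1)$ at some angle $\theta_1$; it then crosses the side cylinder, emerging at the bottom copy of $D$ at the point $\gamma(s_1+\ell(\cos\theta_1))$ — here using the translation invariance in $s$ so the shift depends only on $\theta_1$, not on $s_1$ — and continues as a straight line in the bottom copy at the same angle $\theta_1$. Under the identification of the top and bottom copies of $D$, this is precisely one step of the pensive billiard map $\mathsf{PB}$ with delay function $\ell$: straight-line motion inside $D$, hit $\gamma$, shift by $\tilde\ell(\theta)=\ell(\cos\theta)$ along the boundary, reflect at the same angle. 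Iterating gives the claimed bijective correspondence between pensive billiard trajectories in $D$ and geodesics on the closed cylinder.

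For the correspondence to be faithful at the level of the variational/geometric structure, I would add one remark: the length of a geodesic arc on the glued surface between two interior points $A$ (top copy) and $B$ (bottom copy) that crosses the side once equals $\|A-\gamma(s_1)\|+\mathcal V(\cos\theta_1)+\|\gamma(s_1+\ell(\cos\theta_1))-B\|$, which is exactly the function $f_{A,B}(s_1)$ of Theorem~\ref{thm:potential} since $\mathcal V=V$; thus criticality of geodesic length (Fermat/geodesic principle on the glued surface) matches criticality of $f_{A,B}$, confirming that the geodesics picked out on the cylinder are the same curves as the pensive billiard trajectories, not merely formally analogous ones. The only genuine point requiring care — and the one I would treat as the main obstacle — is verifying that the entry and exit \emph{angles} on the side cylinder truly coincide, i.e. that the relation between $\theta$ and $\dot s$ at $y=0$ and $y=1$ is the same; this is exactly where the boundary condition $f(0)=f(1)$ is used, via conservation of $P=f(y)\dot s$ together with the unit-speed normalization $f(y)\dot s^2+\dot y^2=1$, and it is also the reason the hypothesis $f\ge 1$ (Clairaut) is needed to ensure the geodesic reaches the bottom rather than turning back. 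Everything else is bookkeeping already carried out in the theorems quoted above.
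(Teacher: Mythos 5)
Your proposal is correct and follows essentially the same route as the paper: the corollary is stated there as an immediate consequence of the preceding theorem (which identifies the horizontal shift of a side-cylinder geodesic with $\ell(p)$), combined with exactly the three structural points you spell out — $s$-translation invariance, $f\ge 1$ guaranteeing the geodesic crosses the side (Clairaut), and $f(0)=f(1)$ giving equality of entry and exit angles — so that identifying the top and bottom copies of $D$ turns the geodesic flow into the pensive billiard map. Your additional remark matching the geodesic length with the variational functional $f_{A,B}$ of Theorem~\ref{thm:potential} is consistent with the paper's interpretation of $V$ as the length of the ``curvilinear hypotenuse'' and is a nice, though not strictly necessary, confirmation.
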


\begin{remark}
It would be interesting to classify which pensive billiards allow the puck  interpretation. We note that not all of them do; for instance, as one can see 
from the above theorem, for all generalized puck billiards $\ell(0)=0$. Also, the computation above reveals that for any generalized puck billiard the delay $\ell$ is increasing.
In particular, either of these observations shows that the vortex billiard, whose delay function is $\ell(p):=L(1-\frac{p}{\sqrt{1 + p^2}})$,
is not equivalent to any generalized puck billiard and cannot be described in this way.
\end{remark}


\subsection{From the puck billiards to (almost) classical ones}

Assume that the function $f(y)$ for $y\in [0, h]$ has the symmetry property: 
$f(y)= f(h-y)$. Then instead of defining a delay function, one can consider the classical billiard on the ``half-puck'', simply reflecting geodesic according to the standard billiard law
in the curve $y=h/2$. Such a geodesic will reappear at the top copy of $D$ with the same delay
$\ell(p)$, as in the regular puck billiard.

This suggests the following consideration: extend the flat domain $D$ beyond its boundary 
$\gamma=\partial D$ to a new domain $D_{h/2}$ by a band of width $h/2$ with a special metric $f(y)ds^2 + dy^2$ for 
$y\in [0, h/2]$ and consider the classical billiard in this, generally speaking, non-flat domain  
$D_{h/2}$. Note that geodesics in $D_{h/2}$ refract at the points of $\gamma$, where the flat metric of $D$ changes to the non-flat cylindrical metric.

This point of view can be particularly interesting for small $h$. In particular, it would be interesting to study persistence of caustics or integrability in this enlargement of $D$  for special domains.


\section{Point vortex billiards}\label{sect:vortex-billiard}

In this section, we show that a singular dipole evolving according to 2D Euler equation in the limit of zero separation becomes  a pensive billiard with {\it vortex delay function} 
$$ 
\ell(p):=L-\frac{pL}{\sqrt{1 + p^2}}
$$
for $p=\cos\theta$, where $2L$ is the perimeter of the boundary of the domain $D$.
 The explicit form of the delay function $\ell(p)$ is based on the limiting motion 
of vortex dipoles, studied in Propositions~\ref{prop:fission} and \ref{prop:fusion}.

The goal of this section is to explain why and how this billiard system effectively describes dipoles in the small inter-vortex distance limit.

\subsection{Helmholtz--Kirchhoff point vortex system}
We recall the classical derivation of the point vortex system. On simply connected planar domains (see \cite[\S 2.2]{drivas2023singularity} and \cite{grotta2024interplay} for a discussion of the general case), 
the 2D Euler equation can be written as a closed evolution for the vorticity measure  $\omega=\nabla^\perp \cdot u$:
\begin{align}
\partial_t \omega + u\cdot\nabla\omega = 0, \\
u = \nabla^{\perp} \Delta^{-1}\omega\,,
\end{align}
which manifests in transport of vorticity $\omega$ by a velocity field $u$ generated from $\omega$ through Biot-Savart law. In the point vortex model, we assume that the initial vorticity is given by a finite linear combination of Dirac $\delta$-functions 
$\omega = \sum_{i=1}^{N} \Gamma_i\delta_{\mathbf{z}_i(t)}$
located at points $\mathbf{z}_i = (x_i, y_i)$ in a domain $D \subset \mathbb{R}^2$ and carrying  circulations $\Gamma_i\in \mathbb{R}$.
Formally plugging $\omega$ into the Biot--Savart law to obtain velocity, we find
\be
u(\cdot) = \nabla^{\perp}\Delta^{-1}\omega(\cdot) = \nabla^{\perp}\int_{D}G_D(\cdot ,\mathbf{w})\sum_{i=1}^{N} \Gamma_i\delta_{\mathbf{z}_i(t)}(\mathbf{w}) d\mathbf{w} = \nabla^{\perp}\sum_{i=1}^{N} \Gamma_iG_D({\mathbf{z}_i(t)}, \cdot),
\ee
where $G_D$ is Green's function for the Dirichlet Laplacian on $D$.
Consequently, dynamics of positions of point vortices obey the following system:
\begin{equation}\label{singvorteqns}
	\dot{\mathbf{z}}_i(t) = \nabla^{\perp}\sum_{j=1}^{N} \Gamma_jG_D(\mathbf{z}_i(t),\mathbf{z}_j(t)), \quad   \ i=1, \dots,N.
\end{equation}
Note that this calculation is formal, as the $i = j$ term in the preceding sum is infinite. However, the contribution of self-interaction can be rigorously computed by observing that the singularity is radially symmetric, and the quantity
\be
R_D({\mathbf{z}}) := \lim_{\mathbf{w} \to \mathbf{z}} \Big[ G_D(\mathbf{z}, \mathbf{w}) + \frac{\log(|\mathbf{z}- \mathbf{w}|)}{2 \pi}\Big]
\ee
is bounded (see e.g. \cite{MarchioroPulvirenti}, \cite{DGK24}). Consequently for any radially symmetric desingularization of the vortex, logarithmic singularity does not contribute to the motion, and the equations can be replaced by the regularized version:
\begin{equation}\label{nonsingvorteqns}
	\dot{\mathbf{z}}_i(t) = \nabla^{\perp}\sum_{j\neq i} \Gamma_jG_D(\mathbf{z}_i(t),\mathbf{z}_j(t)) + \nabla^{\perp}\Gamma_iR_D(\mathbf{z}_i), \quad   \ i=1, \dots,N\,.
\end{equation}
Kirchhoff observed that this system can be written in the canonical Hamiltonian form. Namely, the dynamics of $N$ point vortices  is governed by the following Hamiltonian system: 
\begin{equation}
		\Gamma_i \dot{x_i} = \frac{\partial H}{\partial y_i},\qquad 
		\Gamma_i \dot{y_i} = -\frac{\partial H}{\partial x_i}, \qquad \text{for each}  \ i=1, \dots,N\,,
\end{equation}
where the Hamiltonian function $H:D^N\setminus {\bf D} \to \mathbb{R}$ is
$$
H(\mathbf{z}_1,...,\mathbf{z}_N) = \sum_{1\leq i<j\leq N}\Gamma_i\Gamma_j G_D (\mathbf{z}_i, \mathbf{z}_j) + \frac{1}{2}\sum_{i=1}^{N}\Gamma_i^2R_D(\mathbf{z}_i)
$$
(where $ {\bf D}$ consists of all the diagonals $\mathbf{z}_i=\mathbf{z}_j$), 
and the symplectic structure on $D^N \subset \mathbb{R}^{2N}$ is given by $\sum_{i=1}^N \Gamma_i\, dx_i\wedge dy_i$.



We note that the motion of even a small number of point vortices in the presence of boundary is far from being completely analyzed. The complete analysis for one vortex in general domains was carried out by Gustafsson in \cite{gustafsson1979}. Details of motion of several vortices in special domains can be found in \cite{kimura,yang,ModinViviani,KhesinWang} and many other works. In particular, 
in the special case of a half-plane and two equal vortices, $\Gamma_1=\Gamma_2$, an interesting phenomenon of leapfrogging occurs \cite{love1893motion,mavroyiakoumou2020collinear,newton2001n,KhesinWang}.

\subsection{Derivation of vortex billiard system}
The vortex billiard is derived as a limit of zero separation of a vortex dipole evolving as Kirchhoff point vortices.   Specifically, consider two vortices with circulations  $\ve \Gamma$ and $-\ve \Gamma$ for $\Gamma\in \mathbb{R}$, initially located at 
 \be
\mathbf{z}^\ve_\pm := ({x}_0 \pm \ve\cos\theta, {y}_0 \pm \ve \sin \theta ),
 \ee
 for some $(x_0,y_0)\in D$, provided $2\ve< {\rm dist}((x_0,y_0),\partial D)$.
We separate the motion into three regions treated in the following steps.
\vspace{2mm}

\noindent \textbf{Step 1}: (far from boundary)
We claim that away from the boundary singular dipole moves in a straight line, with the relative distance between vortices being constant to first order. To be more precise, let $\delta = \ve^h$ for some $1/2<h<1$, and let 
\be
D^{in}_\delta = \{\mathbf{z} \in D \ |\  \text{dist}(\mathbf{z}, \partial D) > \delta\}.
\ee
We observe that restricted to $ D^{in}_\delta \times  D^{in}_\delta$, Green's and Robin's function satisfy
\be
\begin{aligned}
\nabla G(\mathbf{z}, \mathbf{w})&= -\frac{1}{2 \pi} \frac{\mathbf{z}-\mathbf{w}}{\|\mathbf{z}-\mathbf{w}\|^2} + o\bigg(\frac{1}{\|\mathbf{z}-\mathbf{w}\|}\bigg),\qquad \nabla R(\mathbf{z}) = o\bigg(\frac{1}{\|\mathbf{z}-\mathbf{w}\|}\bigg),
\end{aligned}
\ee
as $\|\mathbf{z}-\mathbf{w}\|\to 0$.
Consequently, to the leading order the equations of motions are given by those in the plane, where  the fact that the dipole moves along a straight line with constant speed is evident. See, for instance, \cite{boatto2015, DGK24} for further details.
\vspace{2mm}

\noindent \textbf{Step 2}: (near the boundary). We claim that as the dipole approaches boundary, the interaction is captured by that in the half-plane, which will be described in \S \ref{half-plane}. Specifically, we have
\begin{prop}
	\label{splitting}
	If dipole traveling at speed $v$ hits the boundary at angle $\theta$, it splits into two vortices traveling along the boundary in the opposite directions with speeds 
	\be v_{\pm} =  v\left(\sqrt{1 + \cos^2\theta} \mp \cos{\theta}\right).
	\ee
\end{prop}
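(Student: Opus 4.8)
The plan is to reduce to the local model of Step~2 and exploit the complete integrability of the two--vortex problem in a half--plane. By the reduction carried out in \S\ref{half-plane}, near the moment of impact it suffices to study the Kirchhoff system for the two vortices of circulation $\pm\ve\Gamma$ together with their images of circulation $\mp\ve\Gamma$ reflected across the (locally flat) wall. This configuration is governed by the half--plane Hamiltonian $H$, which is invariant under simultaneous horizontal translations of the real vortices; since the symplectic form is $\ve\Gamma\,(dx_+\wedge dy_+ - dx_-\wedge dy_-)$, the conserved quantity associated with that symmetry is the vertical impulse $I = \ve\Gamma\,(y_+ - y_-)$. The strategy is then to evaluate the two integrals of motion, $H$ and $I$, in the incoming and outgoing asymptotic regimes, and thereby pin down the outgoing configuration.

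The two regimes are as follows. Incoming: far from the wall the vortices form a dipole of separation $2\ve$ translating rigidly in a straight line (Step~1), so its speed is $v = \Gamma/(4\pi)$; since the segment joining the two vortices is orthogonal to the velocity, its component along the inward normal has length $2\ve|\cos\theta|$. Outgoing: each real vortex pairs with \emph{its own} image to form a vortex--image pair translating parallel to the wall, a vortex of circulation $\ve\Gamma$ sitting at height $b$ moving at speed $\ve\Gamma/(4\pi b)$; the two pairs move in \emph{opposite} directions because the real vortices carry opposite circulations. Thus, writing $b_1,b_2$ for the limiting heights of the two pairs, the outgoing speeds are $\ve\Gamma/(4\pi b_1)$ and $\ve\Gamma/(4\pi b_2)$, and everything reduces to computing $b_1,b_2$.

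Conservation of $I$ gives $|b_1 - b_2| = 2\ve|\cos\theta|$, since in both regimes $I$ equals $\ve\Gamma$ times the signed normal separation. For the energy, one checks that in each regime the mutual and cross--image logarithmic terms combine into the logarithm of a ratio of distances tending to $1$, and hence cancel; what survives is the mutual term $\tfrac{(\ve\Gamma)^2}{2\pi}\log(2\ve)$ in the incoming regime and the two Robin (self--image) terms $\tfrac{(\ve\Gamma)^2}{4\pi}\log(4 b_1 b_2)$ in the outgoing one. Equating gives $b_1 b_2 = \ve^2$, whence $(b_1+b_2)^2 = |b_1-b_2|^2 + 4 b_1 b_2 = 4\ve^2(1+\cos^2\theta)$ and therefore $\{b_1,b_2\} = \{\ve(\sqrt{1+\cos^2\theta} + |\cos\theta|),\ \ve(\sqrt{1+\cos^2\theta} - |\cos\theta|)\}$. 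Rationalizing, the two outgoing speeds $\ve\Gamma/(4\pi b_i)$ equal $v\big(\sqrt{1+\cos^2\theta}\mp\cos\theta\big) = v_\pm$, which together with the opposite--directions statement above is the assertion.

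The main obstacle is justifying the scattering picture behind this bookkeeping: one must show that the trajectory of the half--plane two--vortex system carrying energy $H_{\rm in}$ and impulse $I$ genuinely connects the incoming dipole state (small separation, large height) to the outgoing state of two widely separated translating pairs, rather than lying on a bounded orbit along which the dipole would turn around and re--emerge. This is a qualitative statement about the level curves of $(H,I)$ in the reduced two--dimensional phase space, and it is the real content of \S\ref{half-plane}; the remaining points -- matching the local half--plane regime to the global straight--line motion of Step~1, and controlling the $\ve\to 0$ limit together with the logarithmically divergent image terms that cancel in each asymptotic regime -- are routine.
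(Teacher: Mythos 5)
Your conservation-law bookkeeping is correct and is, at heart, the same mechanism the paper uses: the proof of Proposition~\ref{prop:fission} also rests on exactly two invariants, the half-plane Hamiltonian and the momentum $\mu=y_+-y_-$. The difference is packaging. You evaluate $H$ asymptotically in the two regimes and cancel the logarithms to get $b_1b_2=\ve^2$, then combine with $|b_1-b_2|=2\ve|\cos\theta|$; the paper instead writes the level set of $H$ at fixed $\mu$ as an explicit curve in the $(x_{\mathsf{rel}},y_{\mathsf{abs}})$-plane and reads off $y_{\mathsf{abs}}\to\ve\sqrt{1+\cos^2\theta}+O(\ve^{2-h})$ as $x_{\mathsf{rel}}$ grows, which with $\mu=-2\ve\cos\theta$ gives the same heights $y_\pm=\ve(\sqrt{1+\cos^2\theta}\mp\cos\theta)$ and hence, via the vortex--image speed $\ve\Gamma/4\pi y_\pm$, the claimed $v_\pm$. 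Your identity $1/(\sqrt{1+\cos^2\theta}\mp\cos\theta)=\sqrt{1+\cos^2\theta}\pm\cos\theta$ and the final speeds check out.

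The genuine gap is the one you flag yourself and then set aside: you never show that the orbit with these values of $(H,\mu)$ actually \emph{connects} the incoming dipole regime to the outgoing regime of two separated vortex--image pairs, nor that the passage near the corner takes negligible time, nor that the limit holds uniformly in $t$ (which is what Proposition~\ref{prop:fission} asserts). Declaring this ``the real content of \S\ref{half-plane}'' is circular, since the section you are appealing to is precisely the proof you are asked to supply; and calling the $\ve\to0$ control ``routine'' undersells it, since it occupies most of the paper's argument. Concretely, the missing step is handled there in two moves: (i) the conservation law, solved for $y_{\mathsf{abs}}$ as a function of $x_{\mathsf{rel}}$, shows the level curve is a graph extending to arbitrarily large $x_{\mathsf{rel}}$ with horizontal asymptote $\ve\sqrt{1+\cos^2\theta}$ --- this is what rules out a bounded orbit on which the dipole would turn around (for fusion, by contrast, this is exactly where the silver-ratio threshold enters, so it cannot be waved away); (ii) the time parametrization is controlled by sign and size estimates for $\dot x_{\mathsf{rel}}$, $\dot y_{\mathsf{abs}}$ in the three overlapping regions $A_x$, $A_O$, $A_y$, showing $\dot x_{\mathsf{rel}}>0$ near the corner, that the corner is traversed in time $O(\ve^{4h-3})$, and that afterwards the motion is the straight boundary translation at the stated speeds, with errors $O(\ve^{1-h-h'})$. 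Your energy argument would also need the (true, but unproven in your sketch) statement that in the outgoing regime the mutual and cross-image terms are genuinely subleading, which is again part of the same region analysis. So: right formula, right invariants, but the dynamical traversal --- the actual content of the proposition --- is asserted rather than proved.
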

\begin{remark}[Silver ratio] In the limiting case of zero angle, i.e.,  $\cos\theta=1$, normalizing $v=1$, the speeds $v_{\pm}$ of the separated vortices are given by the {\it silver ratio} $\chi = 1+ \sqrt{2}$, as
\be
v_+ = \frac{1}{\chi}, \qquad v_- = \chi.
\ee
Alternatively the silver ratio can be observed in the distances to the boundary after splitting, as by Theorem \ref{gustaf} the speed of a single vortex is inversely proportional to the distance to the boundary.

\end{remark}

	\begin{figure}[h!]\centering
	\includegraphics[width = 0.4\columnwidth]{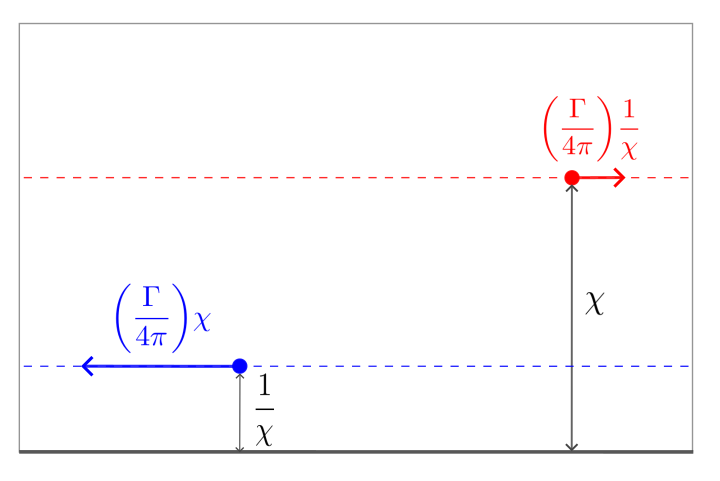}
	\caption{Dipole hitting the boundary ``at zero angle" splits into vortices traveling along the boundary at speeds (respectively, distances to the boundary) proportional (respectively, inversely proportional) to the silver ratio and inverse silver ratio.}
	\label{silver_limit}
\end{figure}

Proof for Proposition \ref{splitting} in the half-plane will be given in the next section, while in the meantime, we provide an argument for the validity of the half-plane approximation.
First, by the discussion above, the vortex dipole Hamiltonian in $ D^{b}_\delta := D \setminus  D^{in}_\delta$ is given by its expression on the whole plane, up to an $o(\ve)$ correction. 
For $x\in M$ lying  within $\delta$-neighborhood of the boundary, the Laplacian can be expressed as
\be
\Delta = \delta^2(\Delta_{\tilde{z},\tilde{s}} + \delta^2 R_\Delta),
\ee
where $\Delta_{\tilde{z},\tilde{s}} = \partial^2_{\tilde{z}} + \partial^2_{\tilde{s}}$ is the Laplacian in the coordinates 
\be
\begin{aligned}
	\tilde{z} &:= \delta\text{ dist}(x, \partial  D)\\
	\tilde{s} &:= \delta\text{ proj}_{\partial D} x,
\end{aligned}
\ee
and $R_\Delta$ is a bounded correction, see  \cite{DIN2023feynman}.
Consequently, Green's functions of $\Delta$ and $\Delta_{\tilde{z}, \tilde{s}}$ in $D^{b}_\delta \cap \{\tilde{s} < \delta\}$ coincide up to $O(\delta^2) = o(\ve)$ correction.  Note that $\Delta_{\tilde{z}, \tilde{s}}$ is essentially the (rescaled) half-plane Laplacian, hence the conclusion.
\vspace{2mm}

\noindent \textbf{Step 3}: (along the boundary)
As vortices leave the region $ D^{b}_\delta \cap \{\tilde{s} < \delta\}$, interaction between them (equivalently, the Green's function term in the Hamiltonian) becomes of lower order compared to the interactions with the boundary (Robin's function term). Consequently, to the leading order the dynamics is given by the following theorem, proved in \cite[Corollary 14]{flucher1997vortex}:
\begin{theorem}[\cite{flucher1997vortex}]\label{gustaf}
	Let $ D\subset\mathbb{R}^2$ be simply connected. A vortex with circulation $\Gamma$ at a small distance $\ve$ from $\partial D$ stays within a distance $\ve + O(\ve^2)$, and travels along the boundary at speed
	\be
	v = -\frac{\Gamma}{4\pi \ve} + O(\ve).
	\ee
\end{theorem}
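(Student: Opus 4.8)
The plan is to reduce the single–vortex motion to the one–degree–of–freedom Hamiltonian flow of the Robin function $R_D$ and then extract the near–boundary asymptotics from an explicit conformal form of $R_D$. For $N=1$ the Kirchhoff system is a one–degree–of–freedom Hamiltonian system on $D$ whose Hamiltonian is a fixed multiple of $R_D(\mathbf{z})$ (equivalently, the one–vortex case of \eqref{nonsingvorteqns}); in particular $R_D$ is an invariant of the motion, so the vortex travels exactly along a level curve $\{R_D=\mathrm{const}\}$, with velocity a fixed multiple of $\nabla^{\perp}R_D(\mathbf{z})$ and speed the same multiple of $|\nabla R_D(\mathbf{z})|$. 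Both assertions of the theorem are therefore statements about the level sets and the gradient of $R_D$ in a thin collar of $\partial D$, with the numerical constant in the speed to be pinned at the end by comparison with one reference domain.

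Since $D$ is simply connected, fix a Riemann map $\phi\colon D\to\mathbb{D}$. By conformal invariance $G_D(\mathbf{z},\mathbf{w})=G_{\mathbb{D}}(\phi(\mathbf{z}),\phi(\mathbf{w}))$, and comparing regular parts via $|\phi(\mathbf{z})-\phi(\mathbf{w})|=|\phi'(\mathbf{z})|\,|\mathbf{z}-\mathbf{w}|\,(1+o(1))$ gives
\be
R_D(\mathbf{z})=R_{\mathbb{D}}(\phi(\mathbf{z}))-\frac{1}{2\pi}\log|\phi'(\mathbf{z})|=\frac{1}{2\pi}\log\big(1-|\phi(\mathbf{z})|^2\big)-\frac{1}{2\pi}\log|\phi'(\mathbf{z})|=\frac{1}{2\pi}\log\big(2\rho(\mathbf{z})\big),
\ee
where $\rho(\mathbf{z}):=(1-|\phi(\mathbf{z})|^2)/(2|\phi'(\mathbf{z})|)$. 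The one essential analytic input is the Kellogg–Warschawski boundary–regularity theorem: since $\partial D$ is smooth, $\phi$ extends to a smooth diffeomorphism $\overline D\to\overline{\mathbb{D}}$ with $\phi'$ nowhere vanishing on $\overline D$. Hence $\rho$ is a smooth function on $\overline D$ vanishing exactly on $\partial D$; since $\phi$ carries the inward normal of $\partial D$ to the inward radial direction of $\partial\mathbb{D}$ one gets $|\phi(\mathbf{z}_0+\ve\hat{n})|=1-\ve|\phi'(\mathbf{z}_0)|+O(\ve^2)$ for $\mathbf{z}_0\in\partial D$ with inward unit normal $\hat{n}$, so $\nabla\rho|_{\partial D}=\hat{n}$ and $\rho(\mathbf{z})=\operatorname{dist}(\mathbf{z},\partial D)+O(\operatorname{dist}(\mathbf{z},\partial D)^2)$ throughout the collar.

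The two claims then follow by routine expansion. A trajectory is a level set $\{\rho=\rho_0\}$; if the vortex starts at Euclidean distance $\ve$ from $\partial D$ then $\rho_0=\ve+O(\ve^2)$ and $\{\rho=\rho_0\}$ lies within $O(\ve^2)$ of $\{\operatorname{dist}(\cdot,\partial D)=\ve\}$, so the vortex stays at distance $\ve+O(\ve^2)$ — the first claim. For the speed, $\nabla R_D=\frac{1}{2\pi}\,\nabla\rho/\rho$, which along the trajectory equals $\frac{1}{2\pi\ve}\,\hat{n}\,(1+O(\ve))$; thus $\nabla^{\perp}R_D$ is tangent to $\partial D$ to leading order (the vortex does travel along the boundary), and $|\dot{\mathbf{z}}|=c_0|\Gamma|\cdot\frac{1}{2\pi\ve}(1+O(\ve))$ for a universal constant $c_0$. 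Evaluating $c_0$ in the flat half-plane of Section~\ref{sect:vortex-billiard}, where the image vortex sits at distance $2\ve$ and induces the exact speed $|\Gamma|/(4\pi\ve)$, forces $c_0=\tfrac12$ and yields $v=-\frac{\Gamma}{4\pi\ve}$ to leading order, the subleading correction being read off by carrying the expansion of $\rho$ (equivalently of $R_D$) one further step. I expect the only real obstacle to be the Kellogg–Warschawski input — uniform control of $\phi$, $\phi'$ up to $\partial D$, hence of $\rho$ and $\nabla\rho$ in a shrinking collar; once that is in hand the rest is elementary Taylor expansion. (An alternative avoiding the Riemann map — and also covering multiply connected $D$ — is to write $R_D(\mathbf{z})=\frac{1}{2\pi}\log\big(2\operatorname{dist}(\mathbf{z},\partial D)\big)+\Psi(\mathbf{z})$ with $\Psi\in C^{1}(\overline D)$, constructed by reflecting across the smooth curve $\partial D$ to produce an approximate Green's function with a smooth remainder, and then arguing verbatim.)
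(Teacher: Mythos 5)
The paper does not actually prove this statement --- it is quoted verbatim from Flucher--Gustafsson \cite{flucher1997vortex} (their Corollary 14) --- so there is no internal proof to compare against. Your sketch reconstructs what is essentially the standard argument behind that citation: for one vortex the Kirchhoff--Routh Hamiltonian is a multiple of the Robin function, so the trajectory is a level curve of $R_D$ and the speed a fixed multiple of $|\nabla R_D|$; conformal transplantation gives $R_D=\frac{1}{2\pi}\log\bigl(2\rho\bigr)$ with $\rho=(1-|\phi|^2)/(2|\phi'|)$, and Kellogg--Warschawski regularity gives $\rho=\operatorname{dist}(\cdot,\partial D)+O(\operatorname{dist}^2)$ with $\nabla\rho|_{\partial D}=\hat n$, from which both claims follow at leading order. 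This is correct and is the right route. One remark on the normalization: rather than calibrating the ``universal constant'' $c_0$ against the half-plane, you could read it off directly from the Hamiltonian $H=\tfrac12\Gamma^2R_D$, which gives $\dot{\mathbf z}=\tfrac{\Gamma}{2}\nabla^{\perp}R_D$; note that the paper's equation \eqref{nonsingvorteqns} as printed omits this factor $\tfrac12$, so your external calibration by the image-vortex computation is in fact a safe way to fix the constant.

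The one genuine quantitative gap is the size of the error term. Your expansion yields $v=-\frac{\Gamma}{4\pi\ve}\bigl(1+O(\ve)\bigr)=-\frac{\Gamma}{4\pi\ve}+O(1)$, and the $O(1)$ term does not vanish in general: it is a curvature correction. For the unit disk the exact image computation gives, at distance $\ve$ from the boundary,
\begin{equation}
v=\frac{\Gamma}{2\pi}\,\frac{1-\ve}{1-(1-\ve)^2}
=\frac{\Gamma}{4\pi\ve}-\frac{\Gamma}{8\pi}+O(\ve),
\end{equation}
so the stated remainder $O(\ve)$ cannot be obtained from this (or any) argument for fixed $\Gamma$; carrying your expansion of $\rho$ ``one further step'' produces precisely this $O(1)$, curvature-proportional term, not $O(\ve)$. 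The statement as transcribed in the paper is to be read in the regime used there, where the circulations are rescaled as $\Gamma\sim\ve$, so that the curvature correction is of size $O(\ve)$ and, as the paper notes, ``does not contribute'' at leading order. If you make that normalization (or simply weaken the remainder to $O(1)$, or record the curvature term explicitly), your proof sketch is complete up to the routine uniformity bookkeeping you already flag (uniform control of $\phi,\phi'$ up to the boundary in a shrinking collar).
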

Since in our renormalization of dipole $\Gamma \sim \ve$,  to the first order the speed of vortices stays as described in Proposition \ref{splitting}. In particular, the boundary curvature does not contribute to the vortex' speed.

\smallskip

Finally, after traveling along the boundary in opposite directions, vortices meet again the distance $2L\frac{v_+}{v_+ + v_-} = L\frac{\sqrt{1 + \cos^2\theta} - \cos\theta}{\sqrt{1 + \cos^2\theta}}$ away from the point of splitting in the direction of motion of the positive vortex. The argument in \textbf{Step 2} can be ran backwards to conclude that vortices will merge into a dipole and enter $ D^{in}_\delta$, and then \textbf{Steps 1-3} repeat. 
\begin{figure}[h!]\centering
        \includegraphics[width=1\columnwidth]{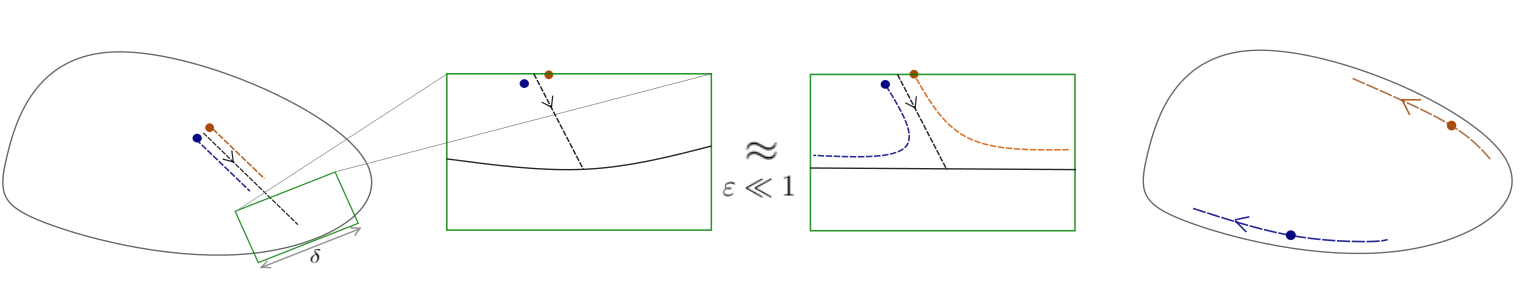}  
  \caption{Cartoon of time evolution of a vortex dipole hitting the boundary.}
\label{fig1}
\end{figure}

This defines the {\it vortex billiard} as the limit of the dipole motion. Next we describe this limit as a pensive billiard with a specific delay function.




\subsection{Dipoles on the half-plane: Fission-fusion rules}
\label{half-plane}

Dipoles in a half-plane can have different types of motion, depending on the strength 
of their interaction as compared to the interaction with their mirrors (or equivalently, ``with the boundary''). 

In the half-plane, Green's function has the following form:
\be
G_{\mathbb{R}^2_+}(\mathbf{z}, \mathbf{w}) = -\frac{1}{2\pi}\log{|\mathbf{z} - \mathbf{w}|} + \frac{1}{2\pi}\log{|\mathbf{z} - \overline{\mathbf{w}}|},
\ee
where $\overline{\mathbf{w}}$ is the mirror image of $\mathbf{w}$. Consequently,
\be
R_{\mathbb{R}^2_+}(\mathbf{z}) := \lim_{\mathbf{w} \to \mathbf{z}} \big[ G_{\mathbb{R}^2_+}(\mathbf{z}, \mathbf{w}) + \frac{\log(|\mathbf{z}- \mathbf{w}|)}{2 \pi}\big] = \lim_{\mathbf{w} \to \mathbf{z}} \big[\frac{1}{2\pi}\log{|\mathbf{z} - \overline{\mathbf{w}}|}\big] = \frac{1}{2\pi}\log{|\mathbf{z} - \overline{\mathbf{z}}|}.
\ee
Combining two preceding equations, Hamiltonian for two vortices in a half-plane is written as 
\be
H(\mathbf{z}_1,\mathbf{z}_2) = \frac{\Gamma_1\Gamma_2}{2\pi} \log{\frac{|\mathbf{z}_1 - \overline{\mathbf{z}}_2|}{|\mathbf{z}_1 - \mathbf{z}_2|}} + \frac{\Gamma_1^2}{4\pi}\log{|\mathbf{z}_1 - \overline{\mathbf{z}}_1|}
+\frac{\Gamma_2^2}{4\pi}\log{|\mathbf{z}_2 - \overline{\mathbf{z}}_2|}.
\ee
For a dipole with $\Gamma_1 = -\Gamma_2 = \Gamma$, we denote for  convenience
\be 
\begin{aligned}
	x_1 &= x_+ \quad x_2 = x_-,\\
	y_1 &= y_+ \quad y_2 = y_-,
\end{aligned}
\ee to simplify the expression for the Hamiltonian. Namely, introducing relative coordinates 
\be
(x_{\mathsf{rel}}, y_{\mathsf{abs}}) := \left(x_+ - x_-, \frac{y_+ + y_-}{2}\right),
\ee
 the Hamiltonian becomes
\be
H(x_{\mathsf{rel}},y_{\mathsf{abs}}) = \frac{-\Gamma^2}{4\pi}\log\bigg[\frac{1}{\mu^2 + x_{\mathsf{rel}}^2} + \frac{1}{4y_{\mathsf{abs}}^2 - \mu^2}\bigg]
\ee
where 
\be\label{mudef}
\mu := y_+ - y_-
\ee
is a conserved ``momentum'' corresponding to the translational invariance. 

Figure \ref{gold_dipole_fig} contains different types of motion for a pair of vortices with the opposite circulations in a nutshell, as the interaction parameter $\mu$ changes.
	\begin{figure}
		\includegraphics[width = 0.9\columnwidth]{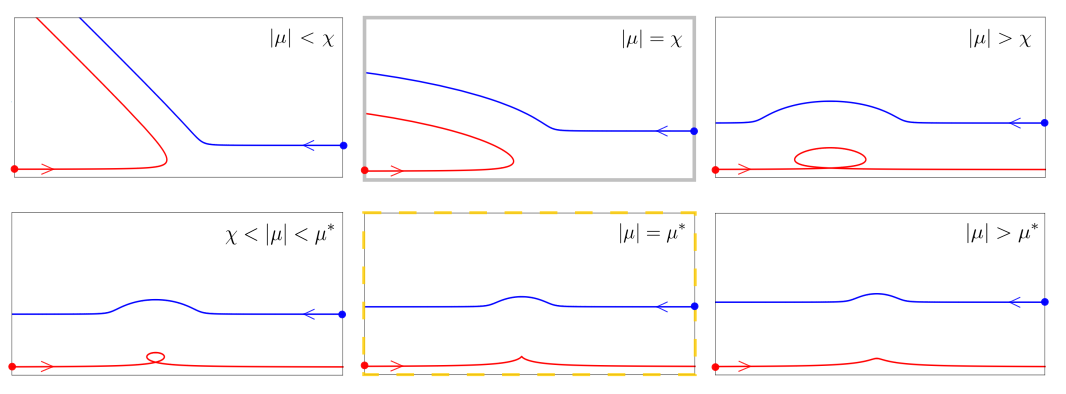}
		\caption{\small Types of motion for two vortices of opposite circulations in the half-plane (bottom of each panel corresponds to the boundary). Top: As the parameter $\mu$ defined in \eqref{mudef} increases through the silver ratio, the dipole stops being formed and vortices start  passing each other. {Bottom: As the parameter $\mu$ increases beyond $\mu^* = 1 + 2\phi + 2\sqrt{1 + 2\phi}\approx 8.35$}, the reverse motion ceases and vortices pass each other smoothly, see \cite{KhesinWang}.}
		\label{gold_dipole_fig}
	\end{figure}
We are interested in the strongest interaction when vortices form a dipole, which is depicted at the upper-left panel.
\medskip

\subsubsection{Fission Rule}
If a vortex dipole `hits' the boundary, it splits  into positive and negative vortices, traveling with different speed along the boundary in the opposite directions.

\medskip

\begin{prop}\label{prop:fission}
 In the half-plane, consider a dipole $\mathbf{z}^\ve_+, \mathbf{z}^\ve_-$ with circulations $\ve\Gamma:=\ve\Gamma_+ = - \ve\Gamma_-$, initially located at $(x_0 \pm \ve\sin\theta, y_0 \mp \ve \cos \theta )$.
 Set  $t_* =4\pi y_0/(\Gamma\sin\theta)$ to be the ``time of hitting the boundary''. 
 Then the motion of the dipole vortices converges to the following trajectories
 $\widetilde{\mathbf{z}}^\ve_+(t), \widetilde{\mathbf{z}}^\ve_-(t)$ in the sense that for  any $t \geq 0$,  $|\mathbf{z}^\ve_\pm(t)- \widetilde{\mathbf{z}}^\ve_\pm(t)|\to 0$
 as $\ve \rightarrow 0$:\\
 \noindent Prior to hitting the boundary (for $t< t_*$):
	\begin{itemize}
		\item $\widetilde{\mathbf{z}}^\ve_i(t) = (x_0 \pm \ve \sin\theta, y_0 \mp \ve\cos\theta) + \frac{\Gamma}{4\pi}t(-\cos\theta, -\sin\theta)$.
	\end{itemize}
	 \noindent After hitting the boundary (for $t>t_*$):
		\begin{itemize}
		 		\item $\widetilde{\mathbf{z}}^\ve_i(t) = \Big((t-t_*)\frac{\pm\Gamma}{4\pi\left(\sqrt{1 + \cos^2\theta} \mp \cos\theta\right)}, \ve\big(\sqrt{1 + \cos^2\theta} \mp \cos\theta\big)\Big)$.
	\end{itemize}
\end{prop}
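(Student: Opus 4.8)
The plan is to make quantitative the three-stage picture of Steps 1--3 above, working throughout with the \emph{exact} half-plane Hamiltonian so that no Green's-function approximation enters. Taking the Hamiltonian displayed above in the variables $(x_{\mathsf{rel}},y_{\mathsf{abs}})$ with $\Gamma$ there replaced by $\ve\Gamma$, one has that $\mu=y_+-y_-$ is conserved (the momentum of the half-plane's $x$-translation symmetry, as noted above), and on the given initial data $\mu\equiv-2\ve\cos\theta$. Since $\{H,\mu\}=0$ the system is Liouville integrable, so the orbit stays for all time on the level set $\{\mu=-2\ve\cos\theta,\ H=H_0(\ve)\}$, where a direct computation gives $H_0(\ve)=\frac{\Gamma^2\ve^2}{4\pi}\log(4\ve^2)+O(\ve^4)$. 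The non-conserved center coordinate $X:=\frac12(x_++x_-)$ is recovered afterwards by the quadrature $\dot X=(\ve\Gamma)^{-1}\partial_\mu H$.

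The first step is to describe this level curve in two overlapping regimes. While $y_{\mathsf{abs}}\gg\ve$ the image terms in $H$ are negligible and $H\approx\frac{\Gamma^2\ve^2}{4\pi}\log(x_{\mathsf{rel}}^2+\mu^2)$ depends on $x_{\mathsf{rel}}$ alone, so the level curve is the near-vertical segment $x_{\mathsf{rel}}=2\ve\sin\theta+O(\ve^2)$, descended at speed $\dot y_{\mathsf{abs}}=-\frac{\Gamma\sin\theta}{4\pi}+O(\ve)$ with $\dot X=-\frac{\Gamma\cos\theta}{4\pi}+O(\ve)$: this is the rigid translating dipole. Integrating, with a Gr\"onwall bound on the merely logarithmically divergent (hence $O(\ve\log\frac1\ve)$) accumulated image correction, reproduces $\widetilde{\mathbf z}^\ve_\pm(t)$ for $0\le t\le t_*-C\ve$, and the prescribed time $t_*=4\pi y_0/(\Gamma\sin\theta)$ is exactly when $y_{\mathsf{abs}}$ would reach $0$. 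To resolve the collision, rescale $\xi=x_{\mathsf{rel}}/\ve$, $\eta=y_{\mathsf{abs}}/\ve$, $m=\mu/\ve=-2\cos\theta$, $\tau=t/\ve$: the rescaled flow is $\ve$-independent and Hamiltonian, and on the relevant energy level the orbit lies on the explicit curve $\frac{1}{4\cos^2\theta+\xi^2}+\frac{1}{4\eta^2-4\cos^2\theta}=\frac14$. One checks that this curve misses the line $\xi=0$ and that its $\xi>0$ component is a monotone graph $\eta=\eta(\xi)$ joining $(\xi,\eta)\to(2\sin\theta,+\infty)$ --- which matches the incoming dipole --- to $(\xi,\eta)\to(+\infty,\sqrt{1+\cos^2\theta})$; combined with conservation of $\mu$ this forces the outgoing heights $y_\pm=\ve(\sqrt{1+\cos^2\theta}\mp\cos\theta)$. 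Because $\dot X=O(1)$ over the $O(\ve)$ duration of this layer, the splitting happens at a single point, which I normalize to the origin (equivalently $x_0=y_0\cot\theta$).

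Once $x_{\mathsf{rel}}$ is of order one (i.e. $\xi\to\infty$), the interactions of each vortex with the other and with the other's image are all $O(\ve)$, so to leading order each vortex is a lone vortex in the half-plane: by Theorem~\ref{gustaf} (elementary and exact here) a vortex of circulation $\pm\ve\Gamma$ at height $d$ drifts parallel to the wall at signed speed $\mp\frac{\ve\Gamma}{4\pi d}$ with $d$ constant, and setting $d=y_\pm$ yields exactly $\widetilde{\mathbf z}^\ve_\pm(t)$ for $t>t_*$. Assembling the regimes: for each fixed $t\ne t_*$ and all small $\ve$ we are in the approach or the slide regime, so $|\mathbf z^\ve_\pm(t)-\widetilde{\mathbf z}^\ve_\pm(t)|=O(\ve\log\frac1\ve)\to0$; on the remaining $O(\ve)$-wide time window around $t_*$ both $\mathbf z^\ve_\pm(t)$ and $\widetilde{\mathbf z}^\ve_\pm(t)$ sit within $O(\ve)$ of the splitting point, which closes the estimate --- in fact uniformly on compact $t$-intervals.

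The main difficulty lies not in any single regime but in the matched-asymptotics gluing: one must bound the accumulation of the image perturbation on the dipole as $y_{\mathsf{abs}}\downarrow\ve$ (which produces the $\ve\log\frac1\ve$ rate), carry the non-conserved coordinate $X$ through the collision by quadrature, and rule out the vortices colliding with the wall or with one another --- the last being automatic because the relevant level curve stays strictly inside $\{y_\pm>0,\ x_{\mathsf{rel}}\ne0\}$. The phase-portrait analysis itself is easy, the level curve being explicit, and, the statement being confined to the half-plane, no Green's-function estimate is needed.
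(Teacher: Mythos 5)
Your proposal is correct and follows essentially the same route as the paper: reduce to $(x_{\mathsf{rel}},y_{\mathsf{abs}})$ via conservation of $\mu$ and $H$, read off the incoming width and outgoing heights $\ve(\sqrt{1+\cos^2\theta}\mp\cos\theta)$ from the explicit level curve, and control the speeds in the approach/collision/slide regimes while showing the collision layer is asymptotically short. Your rescaled $(\xi,\eta,\tau)$ inner problem and the lone-vortex drift law are essentially a repackaging of the paper's region decomposition $A_x\cup A_O\cup A_y$ and its reduced velocity estimates, so there is no substantive difference in method.
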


\begin{proof} From the conservation of Hamiltonian, we can express the equation of the trajectory in $(y_{\mathsf{abs}}, x_{\mathsf{rel}})$--variables:
	\be
	\frac{1}{\ve^2\cos^2\theta + (x_{\mathsf{rel}}/2)^2} + \frac{1}{y_{\mathsf{abs}}^2 - \ve^2\cos^2\theta} = \frac{1}{\ve^2} + \frac{1}{y_0^2 - \ve^2\cos^2{\theta}}.
	\ee
	Let $ 3/4< h< 1$ be fixed. Expressing $y_{\mathsf{abs}}$ as a function of $x_{\mathsf{rel}}$, we see that for any $x_{\mathsf{rel}} > \ve^h$
	\be
	\begin{aligned}
	y_{\mathsf{abs}}  &= \sqrt{\frac{1}{\frac{1}{\ve^2} + \frac{1}{y_0^2 - \ve^2\cos^2{\theta}} - \frac{1}{\ve^2\cos^2\theta + (x_{\mathsf{rel}}/2)^2}} +  \ve^2\cos^2\theta} = \sqrt{\ve^2(1 + \cos^2\theta) + O(\ve^{4-2h})}\\
	& = \ve\sqrt{1 + \cos^2\theta} + O(\ve^{2 - h}),
	\end{aligned}
\ee
	where implicit constant in big-$O$ notation may depend of $\theta, y_{\mathsf{abs}},$ and $h$. Together with the conservation of momentum $\mu = y_+ - y_- = -2\ve\cos\theta$, we get that in $\{x_{\mathsf{rel}} > \ve^h\}$
	\be
	y_\pm = y_\mathsf{abs} \pm \frac{\mu}{2} =  \ve(\sqrt{1 + \cos^2\theta} \mp \cos\theta) + O(\ve^{2 - h}).
	\ee
	Analogously, for any $y_{\mathsf{abs}} > \ve^h$, we have that
$
	x_{\mathsf{rel}} = 2\ve\sin\theta + O(\ve^{2 - h}).
$
	This establishes the shape of the limiting trajectory. 
	
			To obtain the velocity, we also fix  $0< h' < 1-h$ and
	notice that for sufficiently small $\ve$ the whole trajectory lies in the region (see Figure \ref{trajectory-xr-ya})
	\be
	\begin{aligned}
		A_x\cup A_O \cup A_y  :=&  \big\{|x_{\mathsf{rel}} - 2\ve\sin(\theta)|< \ve^{2-h-h'}, y_{\mathsf{abs}}> \ve^h\big\}\cup\big\{0<x_{\mathsf{rel}} < \ve^h,0<y_{\mathsf{abs}} < \ve^h\big\} \cup \\
		&\quad\quad\big\{x_{\mathsf{rel}} > \ve^h,|y_{\mathsf{abs}} -\ve\sqrt{1 + \cos^2\theta}| <\ve^{2-h-h'}\big\}. 
	\end{aligned}
	\ee
	\begin{figure}
		\hspace*{-2cm}     
		\includegraphics[width=0.5\columnwidth]{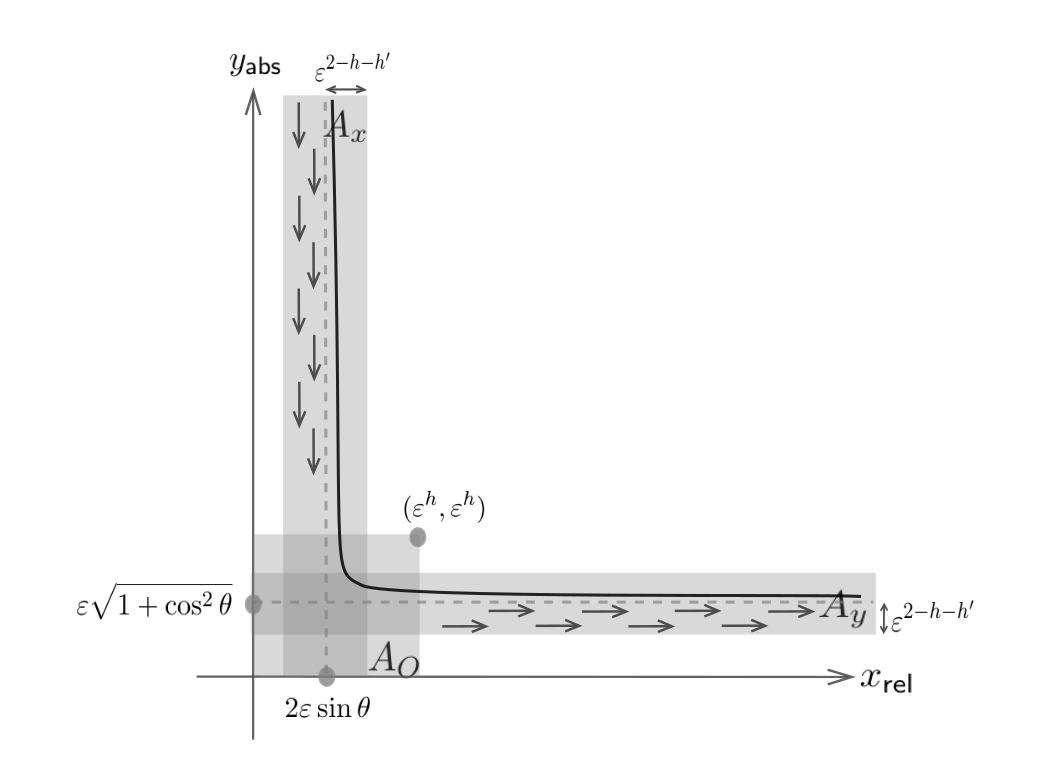}
		\caption{Trajectory of a dipole in $(x_{\mathsf{rel}}, y_{\mathsf{abs}})$ coordinates for small $\ve$.}
		\label{trajectory-xr-ya}
	\end{figure}
	Recalling that evolution of $x_{\mathsf{rel}}, y_{\mathsf{abs}}$ reads
	\be
		\dot{x}_{\mathsf{rel}} =\frac{\Gamma}{4\pi}\frac{\ve}{\frac{1}{4\ve^2} + \frac{1}{y_0^2 - 4\ve^2\cos^2{\theta}}}\frac{8y_{\mathsf{abs}}}{(4y_{\mathsf{abs}}^2 - 4\ve^2\cos^2\theta)^2}, \qquad 
		\dot{y}_{\mathsf{abs}} =\frac{\Gamma}{4\pi}\frac{-\ve}{\frac{1}{4\ve^2} + \frac{1}{y_0^2 - 4\ve^2\cos^2{\theta}}}\frac{2x_{\mathsf{rel}}}{(x_{\mathsf{rel}}^2 + 4\ve^2\cos^2\theta)^2}.
	\ee
	we compute that inside $A_x, A_O$ and $A_y$ we have, respectively, 
	 
	\noindent\begin{minipage}{.3\linewidth}
		\be
		\begin{cases}
			\dot{x}_{\mathsf{rel}} = O(\ve^{3-3h}) \\
			\dot{y}_{\mathsf{abs}} = -\frac{\Gamma}{4\pi}\sin\theta + O(\ve^{1-h-h'})
		\end{cases}\hspace{-2mm},
		\ee
	\end{minipage}\quad
	\begin{minipage}{.3\linewidth}
		\be
		\begin{cases}
			\dot{x}_{\mathsf{rel}} \gtrsim \ve^{3-3h} \\
			\dot{y}_{\mathsf{abs}} < 0
		\end{cases}\hspace{-2mm},{\rm ~and ~}
		\ee
	\end{minipage}
	\begin{minipage}{.3\linewidth}
		\be
		\begin{cases}
			\dot{x}_{\mathsf{rel}} = \frac{\Gamma}{2\pi}\sqrt{1 + \cos^2\theta} + O(\ve^{1-h-h'}) \\
			\dot{y}_{\mathsf{abs}} = O(\ve^{3-3h}) 
		\end{cases}\hspace{-2mm}.
		\ee
	\end{minipage}	

	From the first system we conclude that for $t < t_* - O(\ve^{1-h-h'})$ the trajectory stays in $A_x$ and satisfies
$
	 y_{\mathsf{abs}}(t) = \hat{y} -  \frac{\Gamma}{4\pi}t\sin\theta + O(\ve^{1-h-h'}).
$

From the second system, we conclude that dipole spends at most  time $\ve^{4h-3}$ in $A_O$ before entering $A_y$ at time $t_{**} = t_* + O(\ve^{4h-3})$. 

From the third system we conclude that for $t > t_{**}$ the trajectory stays in $A_y$ and satisfies
$
x_{\mathsf{rel}}(t) = \frac{\Gamma}{2\pi}(t - t_{**})\sqrt{1 + \cos^2\theta} + O(\ve^{1-h-h'}).
$
With that, $x_{\rm abs}(t)$ can be recovered by integrating its evolution equation. Then, individual positions of the vortices can be recovered from absolute and relative coordinates algebraically. Sending $\ve \to 0$ concludes the proof.
\end{proof}	

\subsubsection{Fusion rule}
The following proposition is a counterpart of the previous one, except that we consider merging instead of splitting. First we note that a single vortex of strength $\Gamma$ in the half-plane $y>0$
starting at the point $(x,y)$ moves along the boundary, keeping constant height $y$ at the constant speed $v:= |{\Gamma}/{4\pi {y}}|$ in the direction depending on the sign of $\Gamma$.

\begin{prop}\label{prop:fusion}
	Consider two point vortices $\mathbf{z}^\ve_+, \mathbf{z}^\ve_-$ with circulations $\ve\Gamma:=\ve\Gamma_+ = - \ve\Gamma_-$ in the half-plane, initially located at $({x}_+, \ve\hat{y}_+)$, $({x}_-, \ve\hat{y}_-)$ (with ${x}_+ < {x}_-$). Set $t_* = (x_- - x_+)/(\frac{\Gamma}{4\pi\hat{y}_+} + \frac{\Gamma}{4\pi\hat{y}_-})$ to be the ``time of merging''. As $\ve \rightarrow 0$, the motion converges to the following trajectories $\widetilde{\mathbf{z}}^\ve_+(t), \widetilde{\mathbf{z}}^\ve_-(t)$ in the sense that for any $t \geq 0$,  $|\mathbf{z}^\ve_\pm(t)- \widetilde{\mathbf{z}}^\ve_\pm(t)|\to 0$ as $\ve \rightarrow 0$
	\begin{itemize}
		\item Before merging, for $t<t_*$, the vortices move along the boundary:
		 $$\widetilde{\mathbf{z}}^\ve_\pm(t) = \Big(x_i \pm  t\frac{\Gamma}{4\pi\hat{y}_\pm}, \ve\hat{y}_\pm\Big).$$
		\item If $\chi^{-2} < \frac{\hat{y}_+}{\hat{y}_-} <\chi^2 $ where $\chi$ is the silver ratio, for $t> t_*$ the vortices merge into a dipole:
		
		 $$\widetilde{\mathbf{z}}^\ve_i(t) = (x_* \mp\ve\sqrt{\hat{y}_+\hat{y}_-}  \sin\theta, \pm \ve\sqrt{\hat{y}_+\hat{y}_-}\cos\theta) + \frac{\Gamma}{4\pi\sqrt{\hat{y}_+\hat{y}_-}}t(-\cos\theta, -\sin\theta),$$ 
		 where  $x_* = \frac{x_+\hat{y}_+ + x_-\hat{y}_-}{\hat{y}_+ +\hat{y}_-}$ and $\theta = \tan^{-1}\sqrt{\frac{(\hat{y}_+ - \hat{y}_-)^2}{-\hat{y}_+^2 + 6 \hat{y}_+\hat{y}_- - \hat{y}_-^2}}$.
		\item  Otherwise,  for $t > t_*,$ the vortices pass each other:
				 $$\widetilde{\mathbf{z}}^\ve_\pm(t) = \Big(x_i \pm  t\frac{\Gamma}{4\pi\hat{y}_\pm}, \ve\hat{y}_\pm\Big).$$
	\end{itemize}

\end{prop}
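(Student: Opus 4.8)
The plan is to follow the strategy of Proposition~\ref{prop:fission}, exploiting that the two-vortex system in the half-plane is integrable, with the Hamiltonian $H(x_{\mathsf{rel}},y_{\mathsf{abs}})$ and the momentum $\mu=y_+-y_-$ (which here equals $\ve(\hat{y}_+-\hat{y}_-)$) both conserved. Evaluating the conserved value of $\frac{1}{\mu^2+x_{\mathsf{rel}}^2}+\frac{1}{4y_{\mathsf{abs}}^2-\mu^2}$ on the initial data --- at which point $x_{\mathsf{rel}}=x_+-x_-$ is of order one while $y_{\mathsf{abs}}=\tfrac{\ve}{2}(\hat{y}_++\hat{y}_-)$ --- shows that, to leading order, the whole orbit lies on the level set
\[
\frac{1}{\mu^2+x_{\mathsf{rel}}^2}+\frac{1}{4y_{\mathsf{abs}}^2-\mu^2}=\frac{1}{4\ve^2\hat{y}_+\hat{y}_-}\bigl(1+O(\ve^2)\bigr),\qquad \mu=\ve(\hat{y}_+-\hat{y}_-).
\]
The first bullet (motion before merging) is the easy part: when $x_{\mathsf{rel}}$ is of order one the mutual interaction is of lower order, and each vortex slides along the wall at constant height $\ve\hat{y}_\pm$ and constant speed $\Gamma/(4\pi\hat{y}_\pm)$, as in Theorem~\ref{gustaf}.

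Next I would read off the geometry of the above level curve in the physical quadrant $\{x_{\mathsf{rel}}>0,\ 4y_{\mathsf{abs}}^2>\mu^2\}$ and ask whether it meets the axis $\{x_{\mathsf{rel}}=0\}$. Substituting $x_{\mathsf{rel}}=0$, a solution with $4y_{\mathsf{abs}}^2>\mu^2$ exists precisely when $\mu^{-2}<\tfrac{1}{4\ve^2\hat{y}_+\hat{y}_-}$, i.e. $(\hat{y}_+-\hat{y}_-)^2>4\hat{y}_+\hat{y}_-$, i.e. $\hat{y}_+/\hat{y}_-\notin[\chi^{-2},\chi^2]$ (using $3\pm2\sqrt{2}=\chi^{\pm2}$). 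In that regime the level curve runs from $x_{\mathsf{rel}}=+\infty$ through $x_{\mathsf{rel}}=0$ to $x_{\mathsf{rel}}=-\infty$ with $y_{\mathsf{abs}}$ bounded and symmetric under $x_{\mathsf{rel}}\mapsto-x_{\mathsf{rel}}$; since $\mu$ is conserved the two vortices return to their initial heights $\ve\hat{y}_\pm$ while merely interchanging their left--right order, and as the interaction region collapses to a point when $\ve\to0$ the limiting trajectory is the straight continuation of the pre-collision one --- this is the ``pass each other'' alternative. When instead $\hat{y}_+/\hat{y}_-\in(\chi^{-2},\chi^2)$, the branch issuing from $x_{\mathsf{rel}}=+\infty$ never reaches $x_{\mathsf{rel}}=0$; rather $y_{\mathsf{abs}}\to+\infty$ while $x_{\mathsf{rel}}$ decreases to $x_{\mathsf{rel}}^\infty:=\ve\sqrt{-\hat{y}_+^2+6\hat{y}_+\hat{y}_--\hat{y}_-^2}$, so a dipole escapes the boundary. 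Together with the conserved $\mu=\ve(\hat{y}_+-\hat{y}_-)$ this pins down the half-separation of that dipole as $\tfrac12\sqrt{(x_{\mathsf{rel}}^\infty)^2+\mu^2}=\ve\sqrt{\hat{y}_+\hat{y}_-}$ and its orientation angle $\theta$, yielding the formulas in the statement; the translational point $x_*$ is then fixed by matching to the pre-merging along-boundary motion (speeds $\Gamma/(4\pi\hat{y}_\pm)$ and collision instant $t_*$), which gives $x_*=(x_+\hat{y}_++x_-\hat{y}_-)/(\hat{y}_++\hat{y}_-)$.

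To upgrade this qualitative picture to the time-parametrized statement I would reuse the nested-region argument of Proposition~\ref{prop:fission}: split the level curve into (i) a ``boundary-transport'' piece, where $y_{\mathsf{abs}}=\tfrac{\ve}{2}(\hat{y}_++\hat{y}_-)+o(\ve)$ and $|x_{\mathsf{rel}}|$ is large, on which the evolution of $(x_{\mathsf{rel}},y_{\mathsf{abs}})$ reduces to leading order to the two vortices sliding along the wall; (ii) a small ``corner'' region --- around $x_{\mathsf{rel}}\approx0$ in the passing case, or around the turning point of $x_{\mathsf{rel}}$ in the merging case --- which is the analogue of the set $A_O$ there and where one checks that $\dot{x}_{\mathsf{rel}}$ and $\dot{y}_{\mathsf{abs}}$ keep a fixed sign so the orbit is traversed in time $o(1)$; and (iii) a ``dipole'' piece, where $y_{\mathsf{abs}}$ is large and the equations reduce to uniform rectilinear translation at speed $\Gamma/(4\pi\sqrt{\hat{y}_+\hat{y}_-})$, exactly as in Step~1 of the derivation of the vortex billiard. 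The absolute coordinate $x_{\mathsf{abs}}=\tfrac12(x_++x_-)$, which the conserved quantities do not see, is recovered by integrating its own evolution equation along each piece; individual positions $\mathbf{z}^\ve_\pm$ are then reconstructed algebraically from $x_{\mathsf{rel}},y_{\mathsf{abs}},\mu$ and $x_{\mathsf{abs}}$, and letting $\ve\to0$ gives the claimed limits.

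The main obstacle, exactly as for the fission rule, is the uniform control of the brief passage through the ``corner'' region: one must calibrate the exponents defining the nested neighborhoods so that the orbit provably stays inside them, the dwell time there is $o(1)$, and the entry/exit estimates glue continuously to the pieces (i) and (iii); the velocity estimates in (i)--(iii) are then routine. A secondary point is that the thresholds $\hat{y}_+/\hat{y}_-=\chi^{\pm2}$ are genuinely degenerate --- there the level curve is simultaneously asymptotic to $\{x_{\mathsf{rel}}=0\}$ and to $\{y_{\mathsf{abs}}=\infty\}$ and $\sqrt{-\hat{y}_+^2+6\hat{y}_+\hat{y}_--\hat{y}_-^2}=0$, so no limiting dipole forms --- and these non-generic ratios are excluded from both alternatives in the statement.
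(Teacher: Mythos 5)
Your proposal is correct and follows essentially the same route as the paper: conservation of the Hamiltonian and of $\mu$, the level-curve dichotomy (vertical asymptote versus bounded $y_{\mathsf{abs}}$, giving the $\chi^{\pm2}$ threshold), asymptotics of the level set yielding the dipole width $2\ve\sqrt{\hat{y}_+\hat{y}_-}$, angle $\theta$, and speed, with the time-parametrized/all-time convergence deferred to the nested-region argument of the fission Proposition~\ref{prop:fission}, exactly as the paper does. Your additional matching computation for $x_*$ is a detail the paper leaves implicit, and it is correct.
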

\begin{proof}
	We only prove the convergence at the level of trajectories; upgrading to all-time convergence is analogous to the proof of the `fission' Proposition \ref{prop:fission}. Again from conservation of Hamiltonian,
	\be
	\frac{1}{\mu^2 + x_{\mathsf{rel}}^2} + \frac{1}{4y_{\mathsf{abs}}^2 - \mu^2} = \exp\left({-\frac{4\pi H}{\ve^2\Gamma^2}}\right),
	\ee
	where $\mu := y_+ - y_-$ is the conserved ``momentum'' corresponding to the translational invariance.
	If $\exp\left({-\frac{4\pi H}{\ve^2\Gamma^2}}\right) < {1}/{\mu^2}$, this curve in the $(x_{\mathsf{rel}}, y_{\mathsf{abs}})$-plane has a vertical asymptote, which implies that as vortices approach each other they merge into a dipole. Otherwise, $y_{\mathsf{abs}}$ is bounded, so the vortices pass each other. Evaluating all quantities at the initial data, we observe that 
	\be
	\exp\left({-\frac{4\pi H}{\ve^2\Gamma^2}}\right) = \frac{1}{4\ve^2\hat{y}_+\hat{y}_+} + O(1) \quad\text{and}\quad \frac{1}{\mu^2} = \frac{1}{\ve^2(\hat{y}_+ - \hat{y}_-)^2}.
	\ee
	Consequently, for small $\ve$, condition $\exp\left({-\frac{4\pi H}{\ve^2\Gamma^2}}\right)\leq \frac{1}{\mu^2}$ will be satisfied if and only if
	\be
	(y_+ - y_-)^2 \leq 4y_+ y_- \Leftrightarrow 3  -2\sqrt{2} < \frac{y_+}{y_-} <  3 + 2\sqrt{2}.
	\ee
	Note that, in terms of the silver ratio, $\chi^2=  3 + 2\sqrt{2} $ and $ \chi^{-2} = 3 - 2\sqrt{2} $.
	In this case, as $y_{\mathsf{abs}} \rightarrow \infty$ (corresponding to the limit $t \rightarrow \infty$) we have
$
	\exp\left({\frac{4\pi H}{\ve^2\Gamma^2}}\right) \xrightarrow{t\to \infty} \mu^2 + x_{\mathsf{rel}}^2,
$
	so
	\be
	\tan\theta^\ve(t) = \frac{\mu}{x_{\mathsf{rel}}(t)} \xrightarrow{t\to \infty} \frac{\mu}{\sqrt{\exp\left({\frac{4\pi H}{\ve^2\Gamma^2}}\right) - \mu^2}} \xrightarrow{\ve\to 0} \sqrt{\frac{(\hat{y}_+ - \hat{y}_-)^2}{-\hat{y}_+^2 + 6 \hat{y}_+\hat{y}_- - \hat{y}_-^2}} = \tan\theta.
	\ee
Finally, the width of the merged dipole can again be found from conservation of energy:
\be
\sqrt{\mu^2 + x_{\mathsf{rel}}^2} \xrightarrow{t\to \infty} \exp\left({\frac{2\pi H}{\ve\Gamma}}\right) \xrightarrow{\ve\to 0} \sqrt{4\ve^2\hat{y}_+\hat{y}_-}= 2\ve\sqrt{\hat{y}_+\hat{y}_-}.
\ee
The speed is computed from the fact that it is inversely proportional to the width:
$
v = \frac{\Gamma}{4\pi\sqrt{\hat{y}_+\hat{y}_-}}.
$
\end{proof}

\subsubsection{The golden and silver ratios for vortex dipoles}
The motion of vortex dipoles is somewhat mysteriously related to the golden and silver ratios.
Recall their definitions.

\begin{defi}
The {\it golden ratio} $\phi$ is the ratio of length to width for a rectangle, which preserves this ratio after cutting out {\it the square}. It satisfies the quadratic equation $\phi^2-\phi+1=0$ and explicitly is $\phi=\frac{1+\sqrt 5}{2}= 1.618034...$.

Similarly, the {\it silver ratio} $\chi$ is the ratio of length to width for another rectangle, which preserves that ratio after cutting out {\it two squares}. It satisfies the  equation $\chi^2-2\chi+1=0$ and explicitly is
$\chi=1+\sqrt 2= 2.4142...$.

The golden and silver ratios can be written as continued fractions, $\phi=[1;1,1,1,...]$ and $\chi=[2;2,2,2,...]$. Similarly one can define the bronze ratio, etc, as quadratic irrationals $[n;n,n,n,...]$, which are all called {\it metallic means}.
\end{defi}

Return to point vortices and 
consider the following normalization of the ``dipole at infinity''  in half-plane $\{y>0\}$ with circulations $\Gamma_1 = -\Gamma_2=1$, similar to that used in the fusion above. Recall that 
two point vortices $z_i=( x_i,  y_i)$ have limits $\lim_{t\to-\infty} {y}_i={y}_i^*$. We normalize them  so that $0<y_1^*\le y_2^*$ and for the lowest of them the distance between its  limit ${y}_1^*$ and its mirror image $\bar y_1^*$ equals 1: 
$|{y}_1^*-\bar y_1^*|=1$ (i.e. ${y}_1^*=1/2$ and $\bar y_1^*=-1/2$).

\begin{theorem}
The type of motion for the vortices approaching each other along the boundary changes as follows, depending on the momentum $|\mu|=y_2^*- y_1^*$, see Figure \ref{gold_dipole_fig}:
\begin{itemize}
		\item for $|\mu|$ less than the {\rm silver ratio}, 		
		$0\le|\mu|<\chi$, the vortices  merge into a dipole and travel together (along a slanted or vertical asymptote);
		\item for $|\mu|\ge \chi$, vortices pass one another  and remain at a finite distance from the boundary.
\end{itemize}
\end{theorem}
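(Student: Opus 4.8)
The plan is to read this theorem off the phase-plane analysis already behind Proposition~\ref{prop:fusion}, re-expressed in the ``dipole at infinity'' normalization, and to notice that the transition value is the positive root of a quadratic, which turns out to be $\chi$. First I would fix circulations $\Gamma_1=-\Gamma_2=1$ and pass to the relative coordinates $(x_{\mathsf{rel}},y_{\mathsf{abs}})$ of Section~\ref{half-plane}; there the momentum $\mu=y_+-y_-$ and the Hamiltonian $H$ are conserved, so the orbit lies on a level set of
\be
\Phi(x_{\mathsf{rel}},y_{\mathsf{abs}}):=\frac{1}{\mu^2+x_{\mathsf{rel}}^2}+\frac{1}{4y_{\mathsf{abs}}^2-\mu^2}=e^{-4\pi H},\qquad y_{\mathsf{abs}}>|\mu|/2.
\ee
Moreover, from the explicit equations of motion written out in the proof of Proposition~\ref{prop:fission} one has $\dot x_{\mathsf{rel}}>0$ throughout $\{y_{\mathsf{abs}}>0\}$ (as $\Gamma>0$), so $x_{\mathsf{rel}}$ increases strictly and monotonically from $-\infty$ along the orbit, and the long-time behaviour is dictated by the ends of the connected component of the level curve carrying it.

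Next I would evaluate the conserved value $c:=e^{-4\pi H}$ using the $t\to-\infty$ data. By construction of the normalized dipole at infinity the two vortices are then infinitely separated horizontally, hence --- using the motion of a single vortex near the boundary recalled in Theorem~\ref{gustaf} together with the vanishing of their mutual interaction at infinite separation --- each keeps essentially constant height converging to $y_i^*$, so $|x_{\mathsf{rel}}|\to\infty$ and $y_{\mathsf{abs}}\to(y_1^*+y_2^*)/2$. Substituting,
\be
c=\frac{1}{(y_1^*+y_2^*)^2-(y_1^*-y_2^*)^2}=\frac{1}{4\,y_1^*y_2^*}.
\ee

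The heart of the proof is then a soft analysis of $\{\Phi=c\}$ in $\{y_{\mathsf{abs}}>|\mu|/2\}$. Along any path with $y_{\mathsf{abs}}\to\infty$ one has $\Phi\to(\mu^2+x_{\mathsf{rel}}^2)^{-1}\le\mu^{-2}$, with equality only in the limit $x_{\mathsf{rel}}\to0$; consequently: if $c<\mu^{-2}$ the level curve escapes to $y_{\mathsf{abs}}=\infty$ along the slanted asymptote $x_{\mathsf{rel}}^2=c^{-1}-\mu^2$ and the two vortices merge into a translating dipole; if $c=\mu^{-2}$ the same happens along the vertical asymptote $x_{\mathsf{rel}}=0$; and if $c>\mu^{-2}$ then $\Phi<c$ for all sufficiently large $y_{\mathsf{abs}}$, so $y_{\mathsf{abs}}$ stays bounded on $\{\Phi=c\}$, forcing $x_{\mathsf{rel}}\to+\infty$ and $y_{\mathsf{abs}}\to(y_1^*+y_2^*)/2$, i.e. the vortices pass one another and recede along the boundary. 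It then remains to insert the normalization $y_1^*=1/2$, so that $y_2^*=1/2+|\mu|$ and $4y_1^*y_2^*=1+2|\mu|$; the condition ``$c<\mu^{-2}$'' reads $\mu^2<1+2|\mu|$, i.e. $|\mu|^2-2|\mu|-1<0$, whose boundary is the positive root $1+\sqrt2=\chi$ of $x^2-2x-1$ (at which $y_2^*/y_1^*=3+2\sqrt2=\chi^2$, recovering the ratio threshold of Proposition~\ref{prop:fusion}). Hence the vortices merge into a translating dipole for $|\mu|<\chi$ (along a slanted asymptote), in the borderline case $|\mu|=\chi$ along a vertical one, and for $|\mu|>\chi$ they pass and recede --- which is the stated dichotomy.

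The main obstacle I anticipate is the rigor of the two preliminary points rather than the numerology: first, making the $t\to-\infty$ dipole-at-infinity asymptotics precise, so that the value of $c$ above is genuinely the conserved energy --- this leans on Theorem~\ref{gustaf} and on decay of the far-field interaction; second, upgrading the topology of $\{\Phi=c\}$ to honest dynamical conclusions, namely that the orbit actually traverses its level curve to the relevant end with no finite-time stalling, and that what reaches $y_{\mathsf{abs}}=\infty$ in the merging case is a genuine translating dipole. The strict monotonicity $\dot x_{\mathsf{rel}}>0$ and the quantitative estimates already established in Propositions~\ref{prop:fission}--\ref{prop:fusion} handle exactly these points, so that the theorem is in effect a corollary of the fusion analysis with the silver-ratio computation spelled out.
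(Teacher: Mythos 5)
Your proposal is correct and follows essentially the same route as the paper: the paper's own proof is precisely the observation that the theorem is Proposition~\ref{prop:fusion} restated in the $\mu$-normalization, with the ratio threshold $y_2^*/y_1^*=3+2\sqrt{2}=\chi^2$ turning into the difference threshold $|\mu|=\chi$ once $y_1^*=1/2$, and your explicit level-set analysis (the conserved value $c=1/(4y_1^*y_2^*)$, the dichotomy $c\lessgtr\mu^{-2}$, the quadratic $\mu^2-2|\mu|-1$) is exactly the computation underlying the proof of that proposition. The only point of divergence is the borderline value $|\mu|=\chi$: your argument places that orbit on the branch escaping to $y_{\mathsf{abs}}=\infty$ along $x_{\mathsf{rel}}\to 0$ (so the vortices neither pass nor stay near the boundary), whereas the statement assigns $|\mu|\ge\chi$ to the passing case and its parenthetical ``vertical asymptote'' is meant for the head-on case $\mu=0$ (dipole escaping perpendicular to the boundary), not for $|\mu|=\chi$ --- a measure-zero borderline that the paper itself treats loosely (Proposition~\ref{prop:fusion} uses strict inequalities in its statement but a non-strict one in its proof).
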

\begin{proof}
This dichotomy can be regarded as a reformulation of  Proposition \ref{prop:fusion} above, but in the normalization defined by the momentum $\mu$.
Note that if ${y}^*_2/{y}^*_1 = 3 + 2\sqrt{2} $ and $y_1^*=1/2$ (the latter corresponds to $|{y}_1^*-\bar y_1^*|=1$), then $|\mu|={y}^*_2-{y}^*_1 = (3 + 2\sqrt{2})/2-1/2=1+ \sqrt{2}=\chi$.
\end{proof}
\begin{remark}
The {\it golden ratio} also manifests itself in the motion of vortex dipoles. Namely, typically 
vortices pass each other without stopping and their trajectories are smooth submersed curves in the half-plane.
However, there is a special value of the momentum $\mu$, when the lower vortex has an instantaneous stop, and its trajectory has a cusp. Normalize the dipole motion at the {\it cusp moment} rather than at infinity by setting
$|{y}_1-\bar y_1|=1$ (or, equivalently, ${y}_1=1/2$ and $\bar y_1=-1/2$)  at the moment of cusp.
Then at the same instant, the momentum  is $|\mu|={y}_2-{y}_1 = \phi$, where $\phi $ is the golden ratio, see \cite{KhesinWang} and Figure \ref{gold_dipole_fig} above.
\end{remark}


\subsubsection{Vortex pairs, leapfrogging and the metallic means}
It turns out that both  the golden and silver ratios also manifest themselves in the 
motion of vortex pairs in the half-plane, i.e. point vortices of the same sign and strength.
We present it here to demonstrate 
ubiquitous appearance of the metallic means.  
Figure \ref{gold_vortex_pair_fig} contains different types of  motions for a vortex pair for 
different values of $\mu$.

\begin{figure}
\includegraphics[width = 0.9\columnwidth]{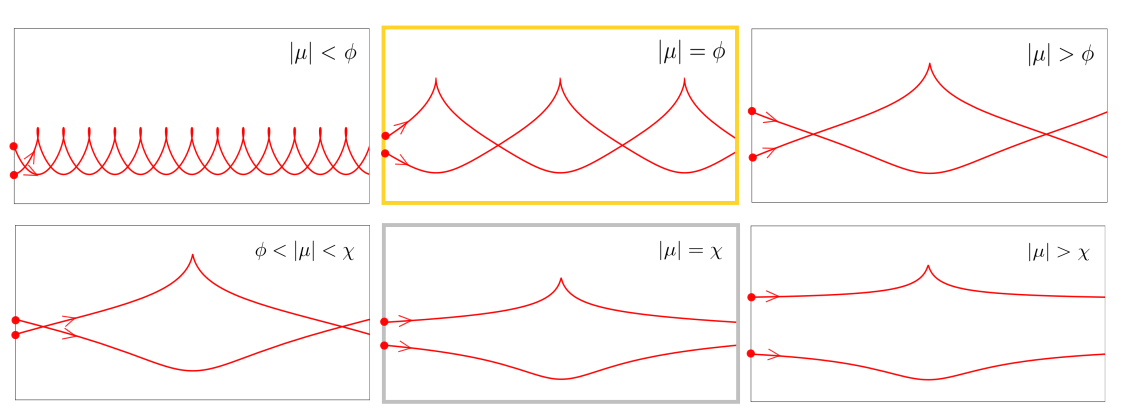}
\caption{\small Types of motion of two vortices of equal circulations in the half-plane. Top: As the parameter $\mu$ defined in \eqref{mudef}  exceeds the golden ratio, the vortices stop reversing. Bottom: As the parameter $\mu$ exceeds the silver ratio, periodic leapfrogging motion ceases and the vortices start passing each other only once. See \cite{KhesinWang}.}
\label{gold_vortex_pair_fig}
\end{figure}
Since vortex pairs behave differently at infinity, they need a different normalization.
We use the fact that they necessarily happen to be at the same vertical at least once. 
Let $0<y_1^*\le y_2^*$ be their $y$-coordinates when the vortices are on the same vertical.
We again normalize them  so that  the distance between the lowest vortex ${y}_1^*$ and its mirror image $\bar y_1^*$ equals 1:  $|{y}_1^*-\bar y_1^*|=1$. Then

\begin{theorem}
The motion for the vortex pair changes as follows depending on $|\mu|:=y_2^*- y_1^*$:
\begin{itemize}
		\item for $0\le|\mu|<\phi$ the vortices  leapfrog with periodic reversing of the top one;
		\item for $|\mu|=\phi$ the upper vortex has an instantaneous stop, and its trajectory has a cusp;
		\item for $\phi\le |\mu|<\chi$ the vortices  have a periodic leapfrogging motion without reversing;
		\item for $\chi\le |\mu|$ the vortices pass each other only once.
\end{itemize}
\end{theorem}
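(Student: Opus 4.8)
The plan is to reduce the two-vortex Hamiltonian system in the half-plane to one degree of freedom and read off the three transitions from the resulting phase portrait, exactly as in the proofs of Propositions~\ref{prop:fission} and \ref{prop:fusion} but now with \emph{equal} circulations $\Gamma_1=\Gamma_2=\Gamma$ (normalize $\Gamma=1$). Writing $x_{\mathsf{rel}}=x_1-x_2$, $\sigma=y_1+y_2$, $\delta=y_1-y_2$, the Hamiltonian from \S\ref{half-plane} becomes
\[
e^{4\pi H}=\frac{(x_{\mathsf{rel}}^2+\sigma^2)\,(\sigma^2-\delta^2)}{x_{\mathsf{rel}}^2+\delta^2},
\]
and $\sigma$ is the conserved momentum for horizontal translations. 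Hence, after quotienting by translations, a trajectory is a level set $\{e^{4\pi H}=C\}$ in the $(x_{\mathsf{rel}},\delta)$-plane with $\sigma$ fixed, and the whole classification is a statement about the topology of this curve and the signs of the velocities along it as $(C,\sigma)$ vary. The qualitative catalogue of orbit types is due to \cite{KhesinWang}; the job here is to re-express its thresholds in the normalization of the statement, namely heights $y_1^\ast\le y_2^\ast$ at an instant when $x_1=x_2$, scaled so that $y_1^\ast=1/2$, with $|\mu|=y_2^\ast-y_1^\ast$.

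\textbf{The dictionary between $(C,\sigma)$ and $\mu$.}
At any instant when the vortices lie on a common vertical ($x_{\mathsf{rel}}=0$), with heights $a<b$, one has $\sigma=a+b$ and $e^{4\pi H}=4\sigma^2ab/(b-a)^2$; since $C$ and $\sigma$ are conserved, the ratio $t:=b/a$ is the same at every such instant, and the normalization $a=y_1^\ast=1/2$ gives $\mu=(t-1)/2$, i.e.\ $t=1+2\mu$.

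\textbf{The two thresholds.}
For the leapfrogging-versus-passing alternative I would examine boundedness of the level curve: letting $x_{\mathsf{rel}}^2\to\infty$ forces $C=\sigma^2-\delta^2$, which is solvable with $\delta^2\ge0$ only when $C\le\sigma^2$, so the orbit is bounded (periodic, i.e.\ leapfrogging) precisely when $C>\sigma^2$ and unbounded (the vortices meet on a common vertical only once and then recede) when $C\le\sigma^2$. Translating through the dictionary, $C>\sigma^2\Leftrightarrow 4ab>(b-a)^2\Leftrightarrow t^2-6t+1<0\Leftrightarrow \chi^{-2}<t<\chi^2$, and since $t=1+2\mu\ge1$ this is exactly $|\mu|<\chi$ — note that $4ab\gtrless(b-a)^2$ is the very same inequality that controlled the dipole case of Proposition~\ref{prop:fusion}. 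For the reversal threshold I would compute the horizontal velocity of the upper vortex at a crossing instant: from the Hamiltonian, with the top vortex at height $b$ and $x_{\mathsf{rel}}=0$,
\[
4\pi\,\dot x_{\mathsf{top}}\big|_{x_{\mathsf{rel}}=0}=\frac{2}{a+b}+\frac1b-\frac{2}{b-a},
\]
which, clearing denominators and using $b>a$, vanishes iff $b^2-4ab-a^2=0$, i.e.\ $t^2-4t-1=0$, i.e.\ $t=2+\sqrt5=2\phi+1$, i.e.\ $|\mu|=\phi$. The right-hand side is negative for $1<t<2\phi+1$ and positive for $t>2\phi+1$, so the upper vortex backtracks against the pair's mean drift precisely when $|\mu|<\phi$ — this is the reversal — and at $|\mu|=\phi$ it has $\dot x_{\mathsf{top}}=0$; since $\partial_{x_{\mathsf{rel}}}H$ also vanishes at every crossing, $\dot y_{\mathsf{top}}=0$ there as well, so the upper vortex is instantaneously at rest and its trajectory has a cusp. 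Combining these two computations with the phase-portrait description of \cite{KhesinWang} gives the four regimes $0\le|\mu|<\phi$, $|\mu|=\phi$, $\phi\le|\mu|<\chi$, $\chi\le|\mu|$.

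\textbf{Main obstacle.}
It is not any individual computation but the bookkeeping needed to justify that the thresholds detected at the crossing instants govern the behaviour everywhere on the orbit: one must check that a bounded orbit is genuinely periodic (so that ``leapfrogging'' is the right word), that the upper vortex does not reverse at some point away from a crossing (so that $|\mu|=\phi$ really is the \emph{only} reversal threshold — equivalently, that $\dot x_{\mathsf{top}}$ attains its minimum over the relevant half-period exactly at the crossing), and that in the unbounded range the reduced curve is a single branch realizing exactly one crossing. All of this follows from the monotonicity of $t\mapsto t/(t-1)^2$ together with the explicit shape of the level curves obtained above, but carrying it out carefully — and reconciling conventions with \cite{KhesinWang} — is where the work lies.
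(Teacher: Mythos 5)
Your computations are correct and reproduce the stated thresholds, but your route is genuinely different from the paper's. The paper's proof is essentially a citation-plus-normalization argument: the golden-ratio (reversal/cusp) threshold is imported from \cite{KhesinWang}, stated there via a cross-ratio of the four points $y_2^*,y_1^*,\bar y_1^*,\bar y_2^*$ and translated into the normalization $|y_1^*-\bar y_1^*|=1$, while the silver-ratio threshold is imported from Love \cite{love1893motion}, whose criterion $3-2\sqrt2<y_2^*/y_1^*<3+2\sqrt2$ for periodic leapfrogging is rewritten as $|\mu|<\chi$ exactly as in your dictionary $t=1+2\mu$. You instead re-derive both thresholds from scratch: your reduced Hamiltonian $e^{4\pi H}=(x_{\mathsf{rel}}^2+\sigma^2)(\sigma^2-\delta^2)/(x_{\mathsf{rel}}^2+\delta^2)$ with conserved $\sigma=y_1+y_2$ is correct, the boundedness criterion $C>\sigma^2\Leftrightarrow 4ab>(b-a)^2\Leftrightarrow t^2-6t+1<0$ recovers (rather than cites) Love's condition, and the crossing-velocity computation $4\pi\dot x_{\mathsf{top}}=\tfrac{2}{a+b}+\tfrac1b-\tfrac{2}{b-a}$, vanishing iff $t^2-4t-1=0$, i.e.\ $t=1+2\phi$, recovers the golden-ratio threshold that the paper takes from \cite{KhesinWang}. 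What the paper's route buys is brevity and the outsourcing of exactly the global statements you flag as remaining bookkeeping: that bounded reduced orbits are genuinely periodic, that a reversal of the top vortex can only be created or destroyed at a crossing instant (equivalently that the sign of $\dot x_{\mathsf{top}}$ at the crossing, where the reversing symmetry $x\mapsto -x$, $t\mapsto -t$ makes $\dot x_{\mathsf{top}}$ even in time and $\dot y_{\mathsf{top}}$ vanish, controls the whole half-period), and that the borderline trajectory has a cusp rather than a smooth stop; these qualitative facts are proved in the cited works, whereas in your write-up they rest on a one-sentence appeal to monotonicity that you would still need to carry out. What your route buys is a self-contained proof that makes the algebraic origin of $\phi$ and $\chi$ (the quadratics $t^2-4t-1$ and $t^2-6t+1$) completely transparent and ties the pair case to the dipole fusion condition $4ab\gtrless(b-a)^2$ of Proposition \ref{prop:fusion}, which the paper only does implicitly through the shared normalization.
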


\begin{proof}
The golden ratio part was proved in \cite{KhesinWang}  in the normalization using the cross-ratio
involving 4 points, $y_2^*, y_1^*, \bar y_1^*, $ and $\bar y_2^*$. It is equivalent to the one above upon fixing  $|{y}_1^*-\bar y_1^*|=1$. 

The silver ratio part follows from the paper \cite{love1893motion}. Indeed, 
Love proved that a periodic leapfrogging motion with or without reversing exists under the condition
$3 - 2\sqrt{2} < {y}^*_2/{y}^*_1 < 3 + 2\sqrt{2} $. As we have shown above, it is equivalent to 
$|\mu|<\chi$. 
\end{proof}

There arises a natural question of whether other metallic means also appear in the problem of point vortices or in 2D hydrodynamics in general. 


\subsection{Numerical experiments with vortex dipoles}
\label{sect:numerics}

In this section, we present a few numerical examples of point vortex billiards.  One set of examples is for a family of domains where convexity is eventually lost.  Another example comprises multiple dipole billiards on the same disk, resulting in a chaotic motion.
\vspace{2mm}


Let us start with the case of a single dipole on disk.  Recall that  the motion of two vortices in the disk is integrable
(due to the rotational and time-translation symmetries of the system, the angular momentum and Hamiltonian serve as two integrals of motion in involution.),
see e.g. \cite{geshev2018motion, yang}. 
Both the motion of a non-singular vortex dipole pair on the disk
and the vortex billiard as its limit have caustics, see Figure \ref{fig:caustic} for a demonstration of Proposition \ref{caustic}  for such caustics. In particular, the corresponding trajectories can be found explicitly, rather than numerically. 
Bifurcation diagrams as interaction parameters vary have been studied in \cite{ryabov2019bifurcation, ryabov2021dynamics,sokolov2017bifurcation}.
\begin{figure}[htb]\centering
    \includegraphics[width=.32\columnwidth]{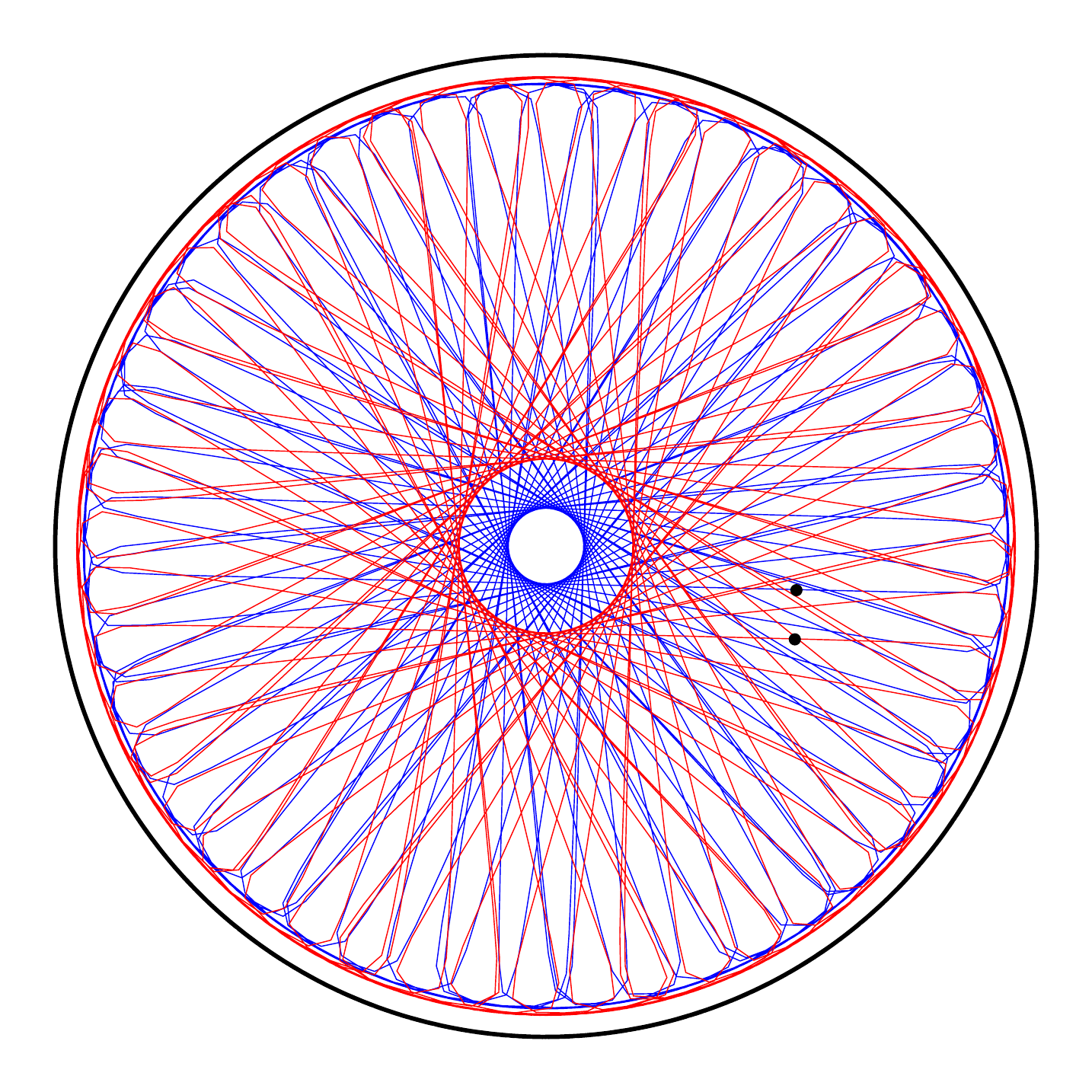} 
        \includegraphics[width=.32\columnwidth]{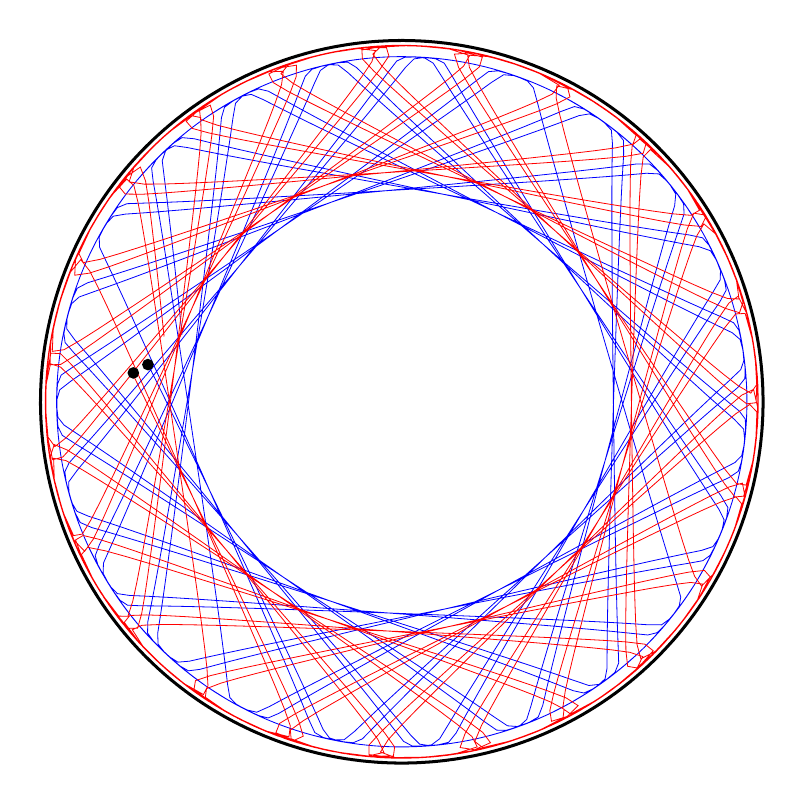} 
                \includegraphics[width=.32\columnwidth]{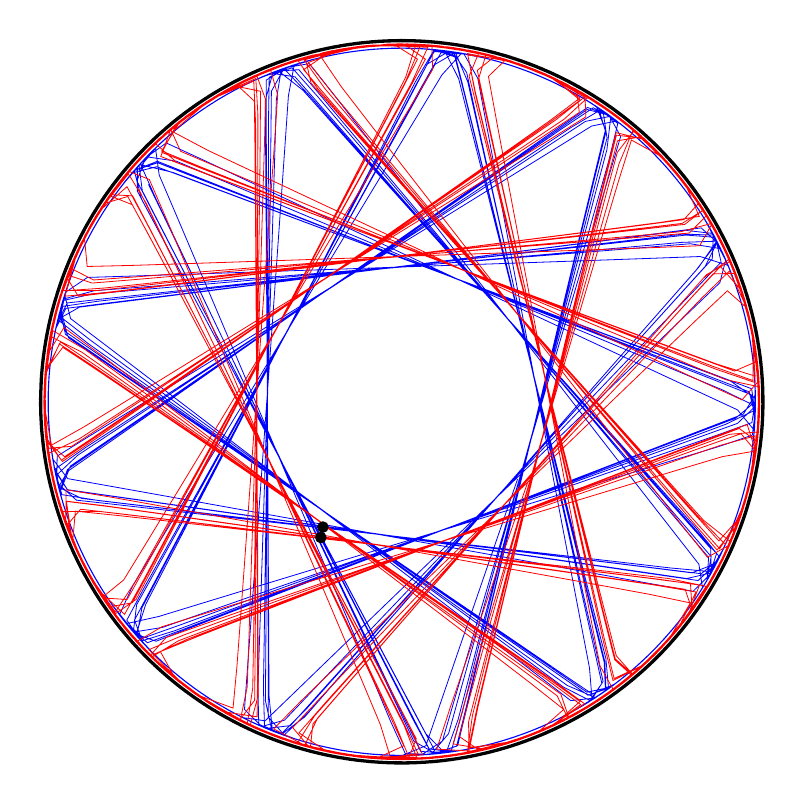} 
  \caption{The caustic circle for one dipole on the disk. Three different initial conditions, with decreasing separations left to right. Trajectories of vortices with positive and negative circulations are shown, respectively, in red and blue.}
\label{fig:caustic}
\end{figure}

Even far from the singular limit, the motion resembles a billiard system (Figure \ref{figdisk}, left panel). 
 On the disk, interaction with image vortices sitting at `inverted' points outside the circle completely characterize the interaction with the boundary (Figure \ref{figdisk}, right panel).

\begin{figure}[htb]\centering
   \includegraphics[width=.36\columnwidth]{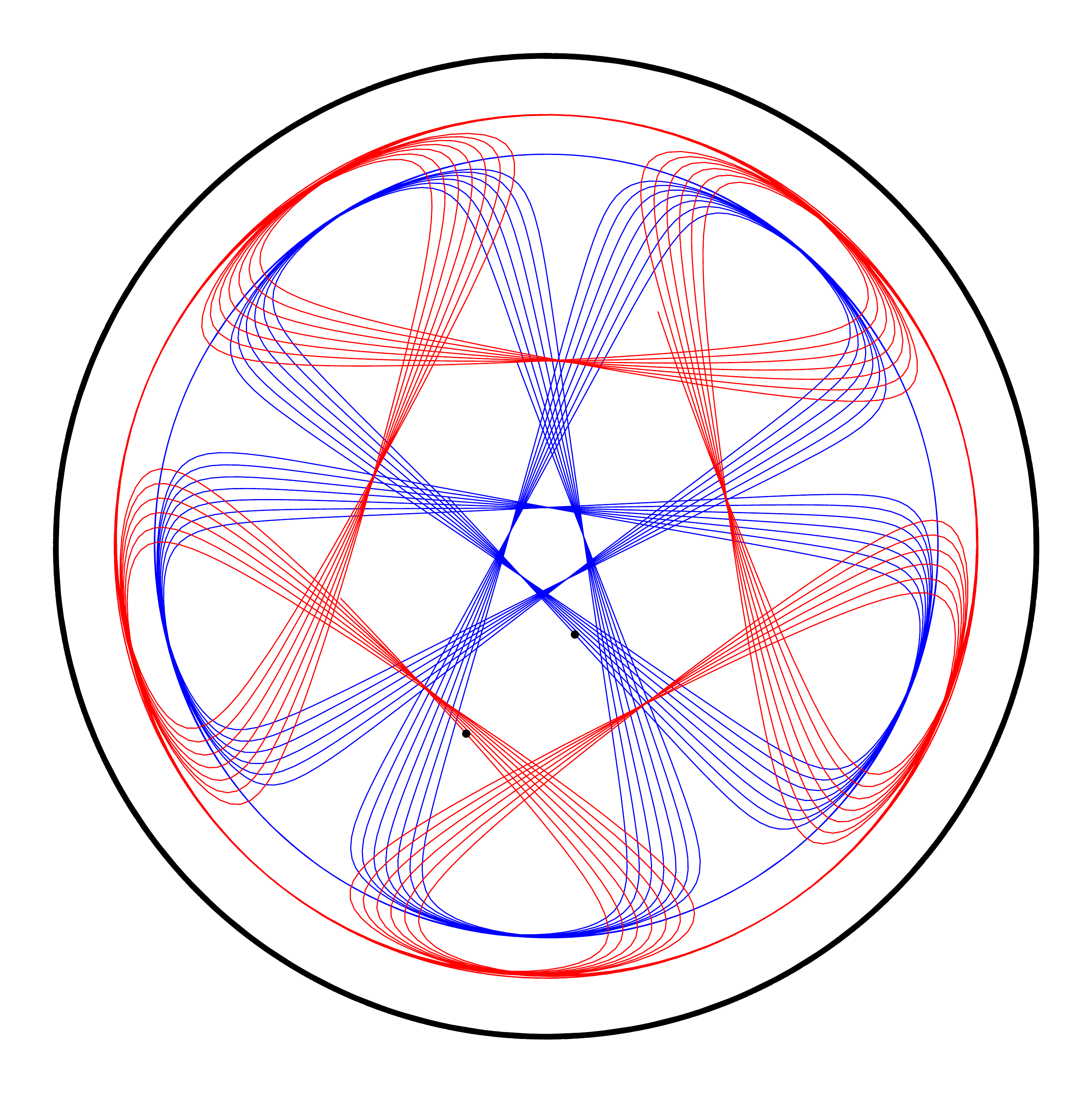} \hspace{6mm}
      \includegraphics[width=.39\columnwidth]{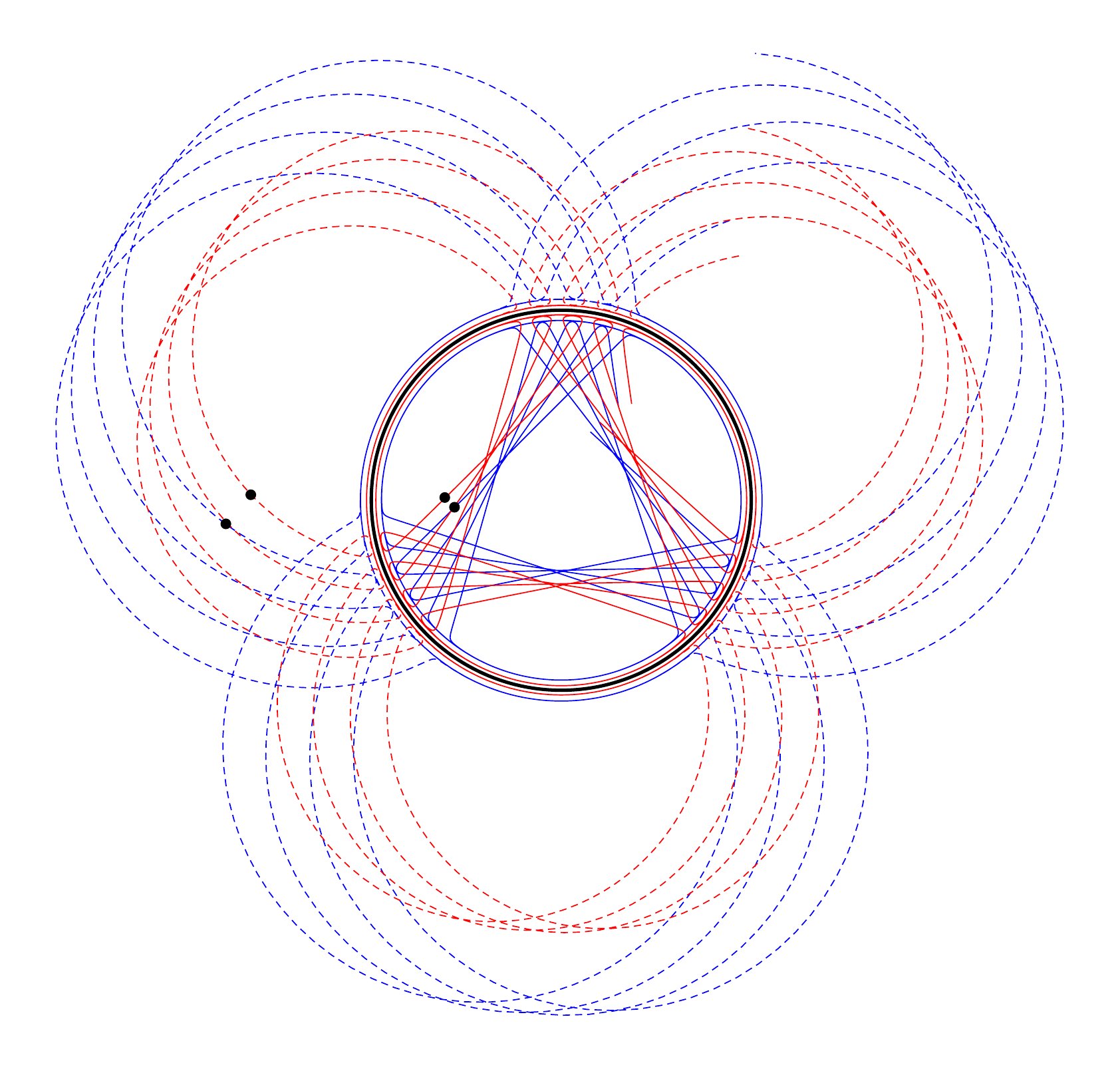} 
  \caption{Left: Vortex dipole on disk with large inter-vortex distance. Right: trajectories of the dipole together with its image charges.}
\label{figdisk}
\end{figure}

\subsubsection{Single dipole on Neumann ovals}
We present here a numerical study of the point vortex system on Neumann ovals, a family of domains which interpolates from a disk domain to a two-disk domain through hourglass-type domains.  All of these can be defined as conformal images of unit disk under the two-parameter ($a,\lambda$) mapping
\be
z=F_{a,\lambda}(Z) := \frac{aZ}{1 - \lambda^2 Z^2}. 
\ee
The parameter $a$ can be chosen to fix the area of the domain to be $\pi$ by taking $a(\lambda) = \frac{1-\lambda^4}{(1+\lambda^4)^{1/2}}$.  
The parameter $\lambda$ controls the shape of the oval: $\lambda=0$ is the disk, while for $\lambda\in (0,1)$ is an interpolating family of hourglass domains which limit as $\lambda\to 1$ to two disks.  

The practical utility of this family of domains is that one can relate  explicit Green's function on the disk domain (obtained by the method of inversion), to  Green's function on the Neumann oval domain via the (inverse) conformal map. Specifically, identifying $\mathbb{R}^2$ with $\mathbb{C}$ and letting $f$ be the inverse map of $F_\lambda$,   Green's function on the Neumann oval is
\be
G(z,w) = G_{\rm disk}(f(z),f(w)), \qquad z,w\in {\rm Neumann\ oval}.
\ee
The Hamiltonian for the vortex system is based on this adjusted Green's function.
We exploit this structure to numerically integrate two point vortices on Neumann oval domains, see Figure \ref{figOval}.
 For more details and interesting simulations of point vortices on these domains, see \cite{ashbee2013generalized} and \cite[Appendix A.2.2]{ashbee2014dynamics}.
 
 We remark that, although the domains are conformally equivalent to the disk, the two-vortex system is not necessarily integrable on the Neumann oval despite being so on the disk.  The reason is that, while the Green's function is conformally invariant, the Robin function is not and hence the corresponding Hamiltonian is not conformally invariant.   It would be interesting to study ergodic properties of pensive billiards on this family of domains, and whether or not caustics survive for small values of $\lambda$.

\begin{figure}[htb]\centering
   \hspace{-5mm} \includegraphics[width=.3\columnwidth]{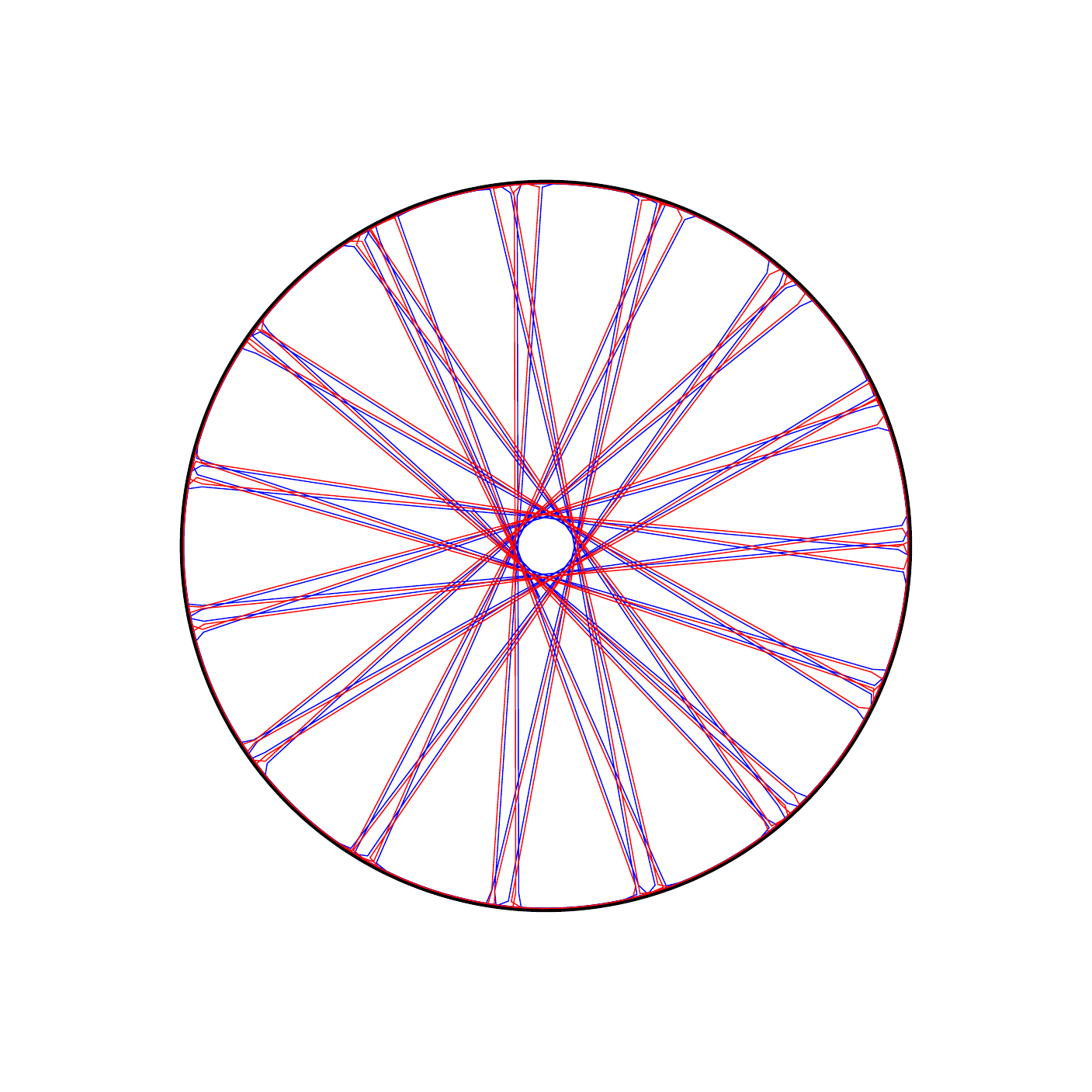} \hspace{-7mm}  \includegraphics[width=.3\columnwidth]{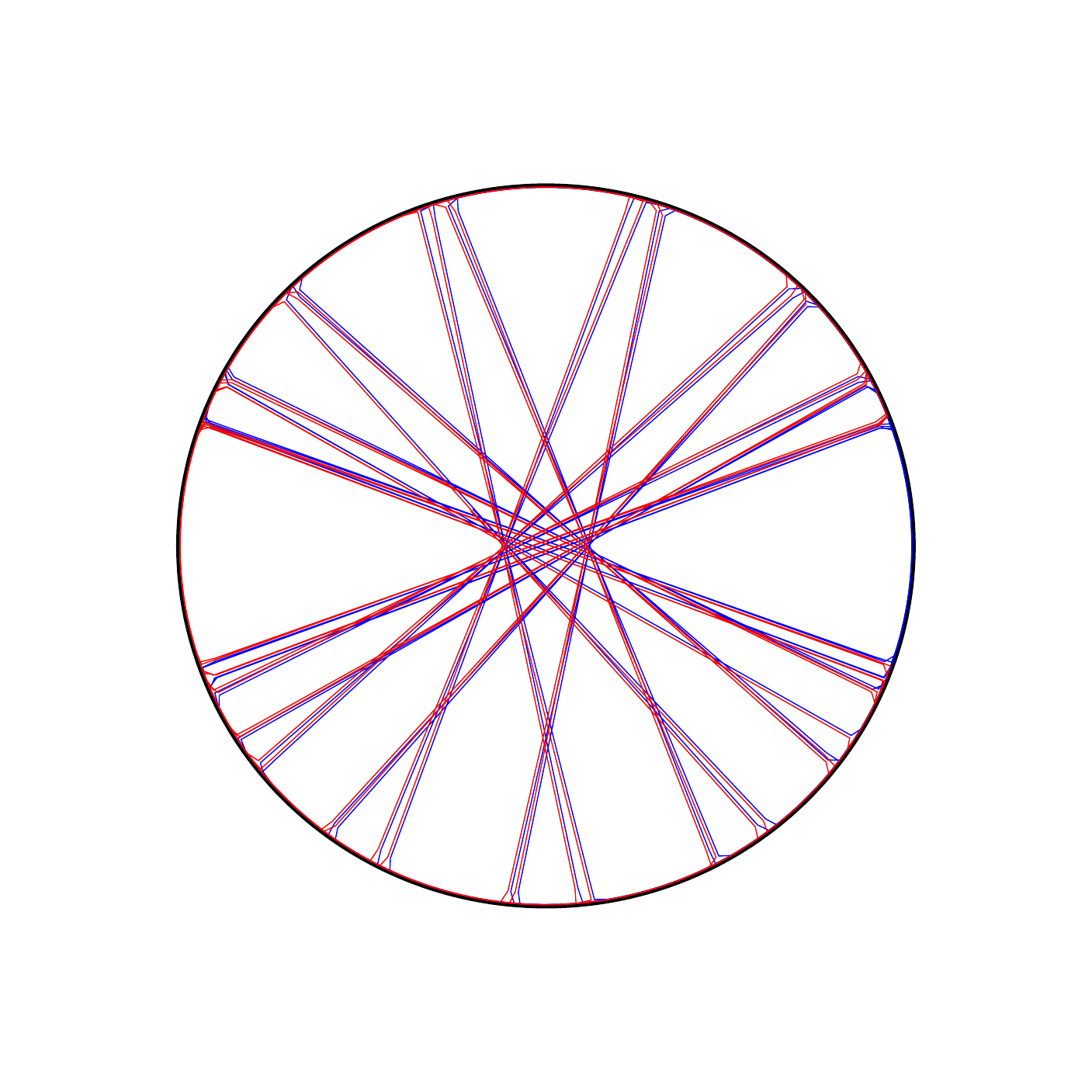} \hspace{-5mm}   
   \includegraphics[width=.3\columnwidth]{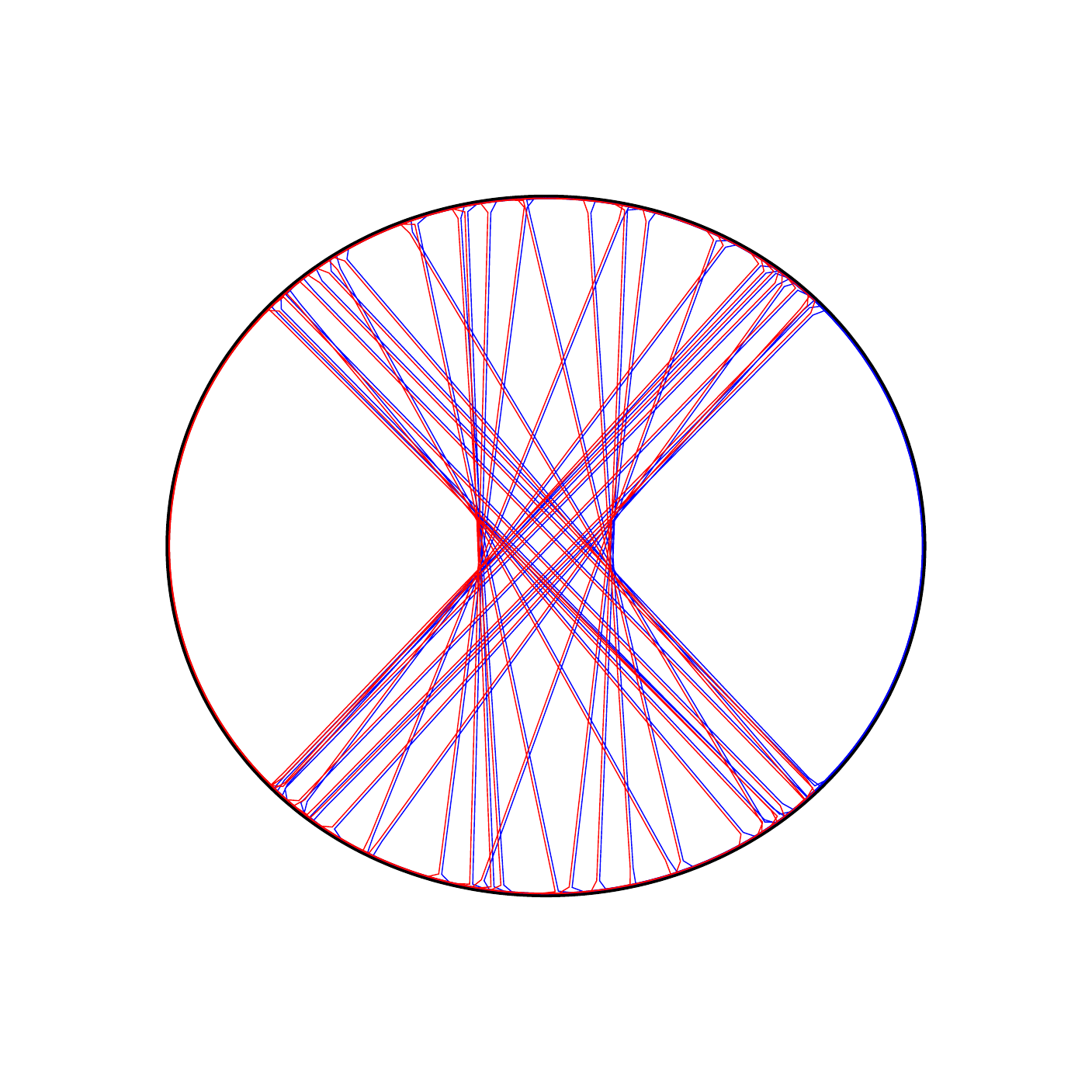} \\
    \vspace{-15mm}
            \includegraphics[width=.3\columnwidth]{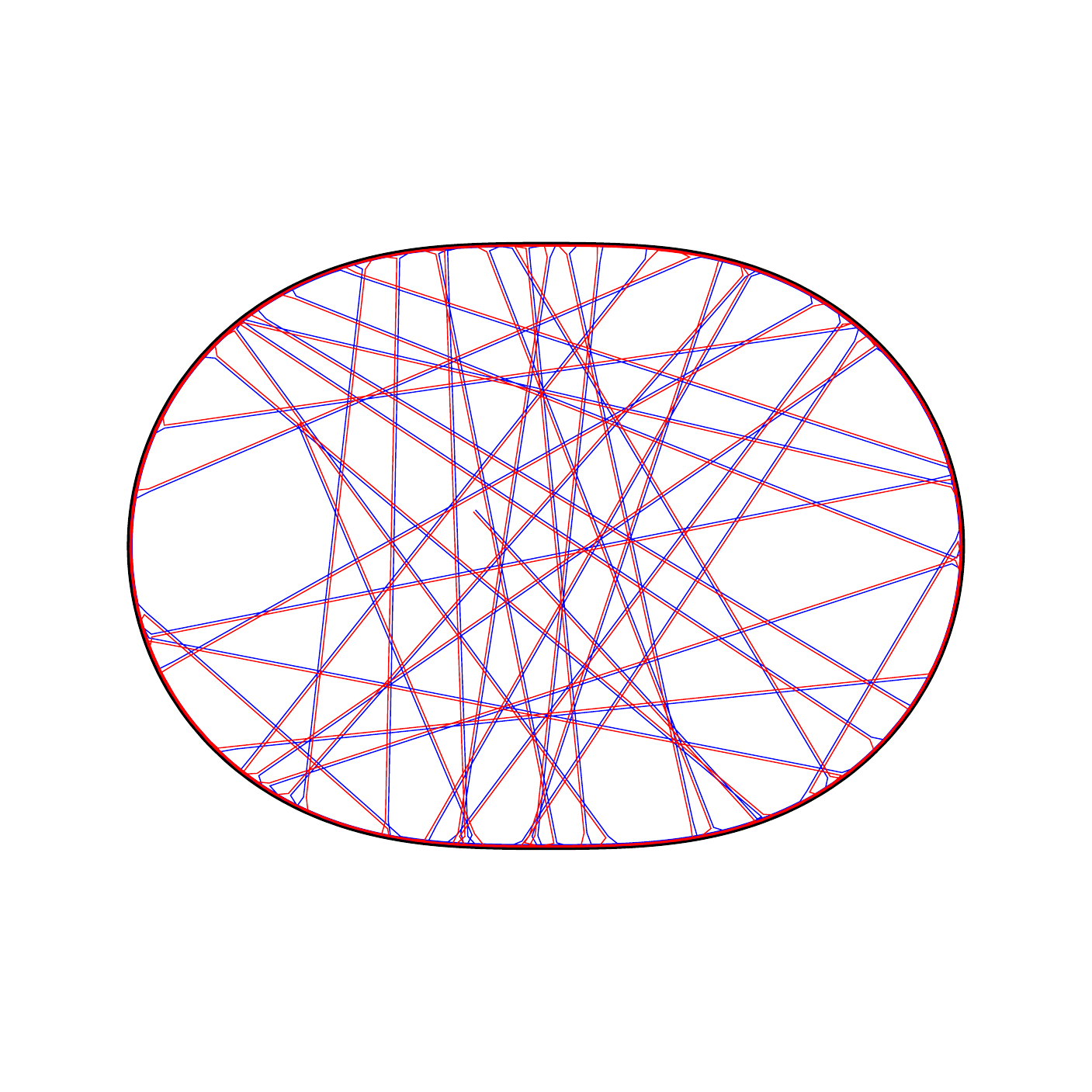} \hspace{-7mm}
                \includegraphics[width=.3\columnwidth]{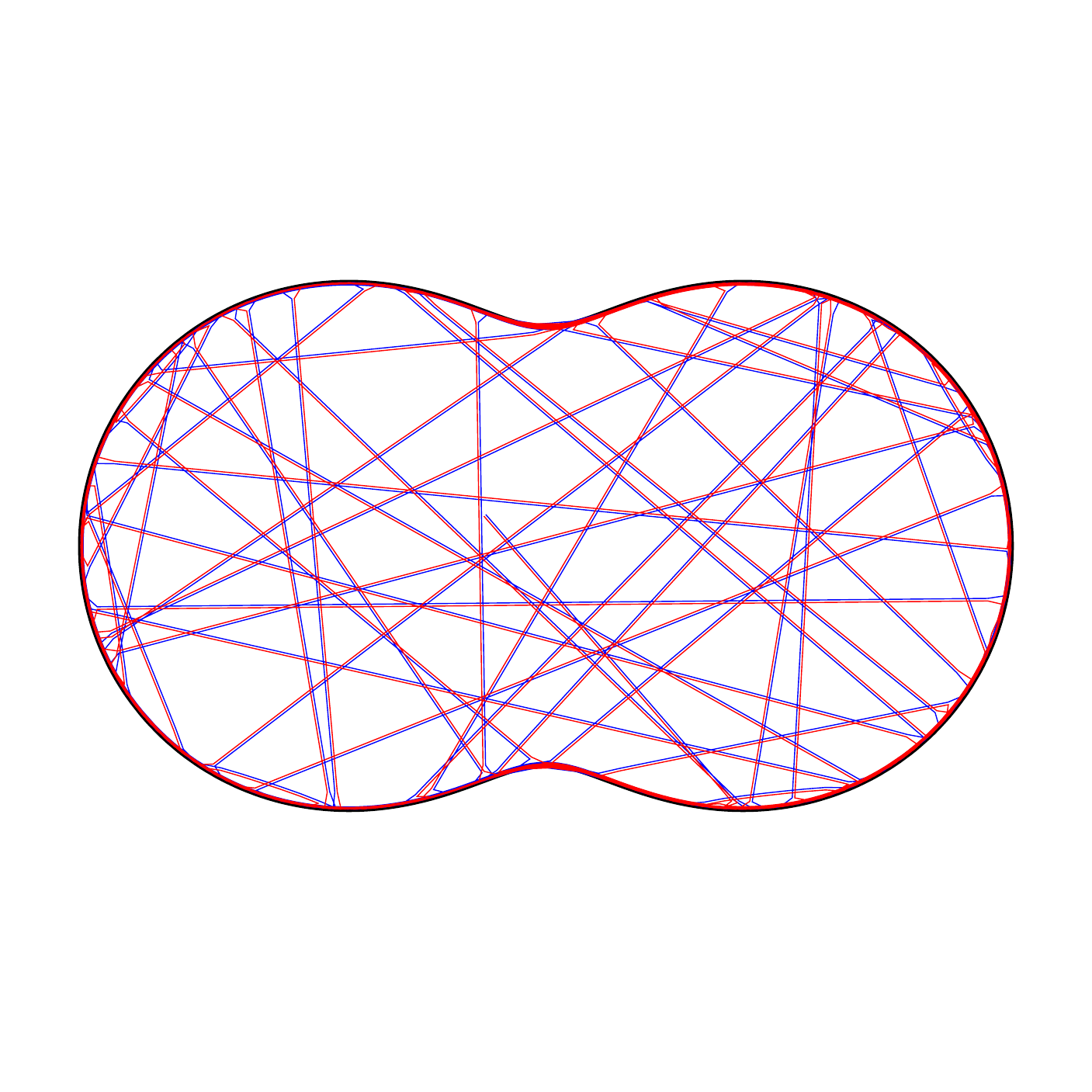} \hspace{-5mm}
                    \includegraphics[width=.3\columnwidth]{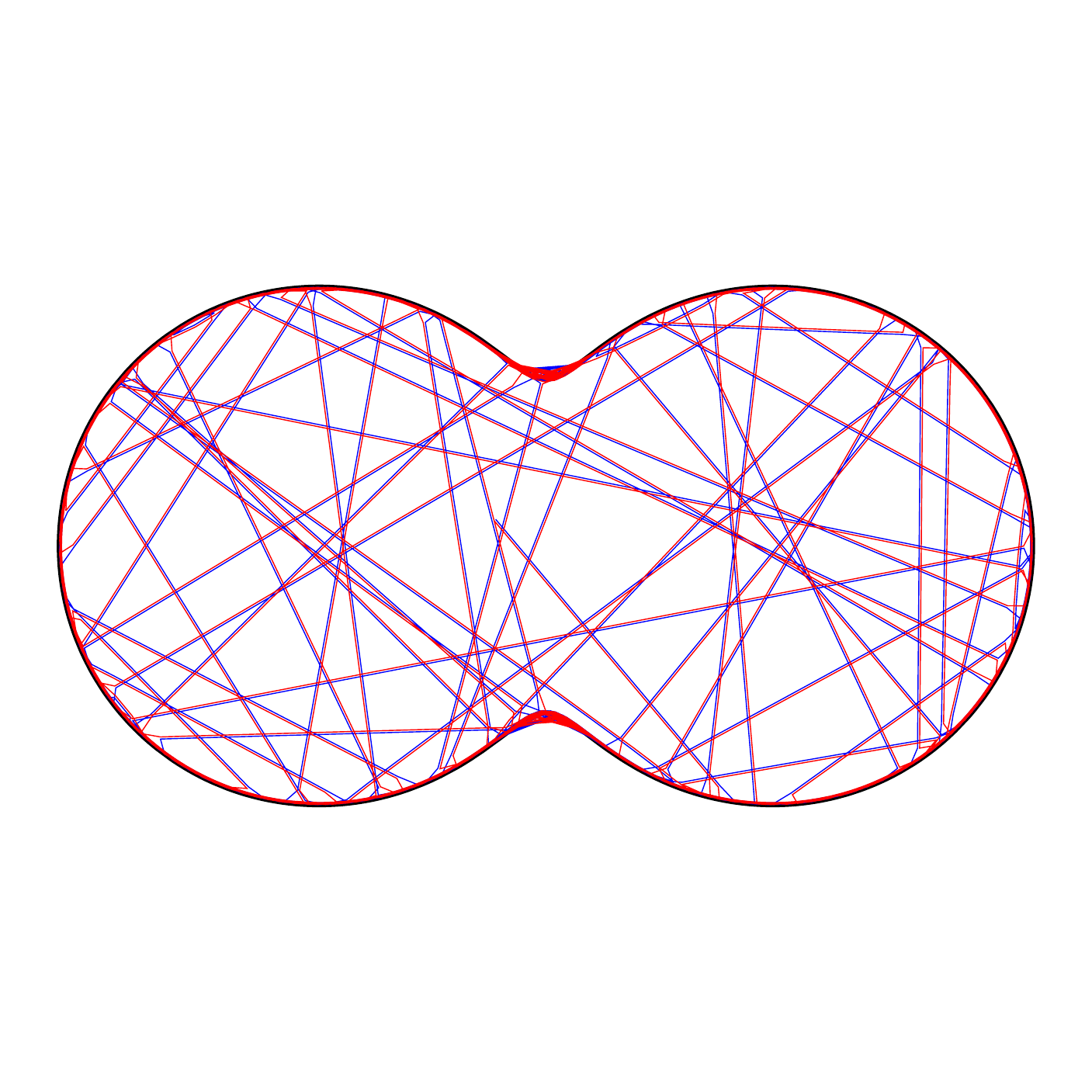} \\
                        \vspace{-10mm}
  \caption{Dipole on Neumann Oval domains with $\lambda=0,0.1,0.3,0.4,0.5,0.7$ respectively.}
\label{figOval}
\end{figure}
\vspace{2mm}

\subsubsection{Multiple interacting dipoles}

There is an important difference between the vortex billiard system as compared to other pensive billiards (such as the puck billiard).  Namely, if several classical billiards are superimposed on the same table, their interaction is almost surely absent.  On the contrary, it is fully generic that different vortex billiards on the same domain will interact, although only on the boundary of the domain, during those time intervals when the dipole is split into two monopole vortices  traveling 
separately along the boundary.  Since this boundary is one-dimensional, collisions are guaranteed and can easily happen between vortices of different pairs. That is, vortices generically exchange partners and reappear inside the domain in different vortex pairs.  See Figure \ref{figmultiple} for a  demonstration of this behavior.

For this reason, to consider more than one vortex dipole pair,   one needs to  
derive the correct limiting dynamics from the original Kirchhoff point vortex system.  However, it turns out that there is only one extra constraint in addition to the above study: if two  vortices of the same sign meet when traveling along the boundary, they simply pass by each in the limit of initial separations taken to zero.    We prove this in greater generality here, allowing the circulations of the vortices in the pair to be not equal or opposite.  The result is that there are no mergers of vortices of the same sign 
in the half-plane:

\begin{prop}\label{nomerge}
	Consider two point vortices of circulations $\ve\Gamma_1$ and $\ve\Gamma_2$ with $\Gamma_1 \neq -\Gamma_2$ in the half-plane, starting from initial positions $(x_1, \ve \hat{y}_1)$, $(x_2, \ve \hat{y}_2)$. Assume in addition that  $x_1 \neq x_2$.
	Then, there exists $\ve_0 > 0$ such that for all $0 < \ve < \ve_0$ vortices pass one another.
\end{prop}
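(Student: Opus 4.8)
\emph{Proof sketch.} The plan is to reduce the half--plane two--vortex system by its translational symmetry, use conservation of the reduced Hamiltonian together with the momentum to keep both vortices within an $O(\ve)$--collar of the boundary, and then invoke that this rules out the only merging mechanism available, namely the ``fly--off dipole'' of Proposition~\ref{prop:fusion}; the two monopoles are thereby forced to pass one another. The hypothesis $\Gamma_1\neq-\Gamma_2$ is used precisely at the trapping step. Concretely, writing $x_{\mathsf{rel}}=x_1-x_2$, the Hamiltonian of the pair (with the common factor $\ve^2$ removed from the circulations) is
\[
4\pi H=\Gamma_1\Gamma_2\log\frac{x_{\mathsf{rel}}^2+(y_1+y_2)^2}{x_{\mathsf{rel}}^2+(y_1-y_2)^2}+\Gamma_1^2\log(2y_1)+\Gamma_2^2\log(2y_2),
\]
and translation invariance in $x$ yields the conserved momentum $\mathcal{P}:=\Gamma_1y_1+\Gamma_2y_2$, so after reduction the system is one degree of freedom and each orbit lies on a level curve of $H$ at fixed $\mathcal{P}$. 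Evaluating at the initial data $(x_i,\ve\hat y_i)$ and using $x_1\neq x_2$ (which makes the interaction term $O(\ve^2)$), one gets $\mathcal{P}=\ve(\Gamma_1\hat y_1+\Gamma_2\hat y_2)=O(\ve)$ and $4\pi H=(\Gamma_1^2+\Gamma_2^2)\log\ve+\Gamma_1^2\log(2\hat y_1)+\Gamma_2^2\log(2\hat y_2)+o(1)$, whence $e^{4\pi H}\asymp\ve^{\Gamma_1^2+\Gamma_2^2}$.

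The key step is the a priori bound $y_1(t),y_2(t)=O(\ve)$ for all $t$, which traps the vortices near the boundary and hence forbids a dipole. I would argue by cases on whether the two heights are comparable. When $|y_1-y_2|<c(y_1+y_2)$, write $y_{1,2}=\bar y(1\pm s)$ with $|s|<c$: then $\mathcal{P}=\bar y[(\Gamma_1+\Gamma_2)+s(\Gamma_1-\Gamma_2)]$, and since $\Gamma_1+\Gamma_2\neq0$ the bracket is bounded away from $0$ for $c$ small, so $\bar y=O(|\mathcal{P}|)=O(\ve)$. When $|y_1-y_2|\ge c(y_1+y_2)$, the ratio in $e^{4\pi H}$ lies in $[1,c^{-2}]$, hence $(2y_1)^{\Gamma_1^2}(2y_2)^{\Gamma_2^2}\lesssim e^{4\pi H}\asymp\ve^{\Gamma_1^2+\Gamma_2^2}$; combined with $\mathcal{P}=\Gamma_1y_1+\Gamma_2y_2=O(\ve)$ and $y_i>0$, this forces $y_1,y_2=O(\ve)$ --- immediately if $\Gamma_1,\Gamma_2$ have the same sign, and otherwise by noting that if, say, $y_2>M\ve$ then $y_1=(\mathcal{P}+|\Gamma_2|y_2)/\Gamma_1\gtrsim M\ve$, making $(2y_1)^{\Gamma_1^2}(2y_2)^{\Gamma_2^2}\gtrsim M^{\Gamma_1^2+\Gamma_2^2}\ve^{\Gamma_1^2+\Gamma_2^2}$, a contradiction for $M$ large.

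With $y_1,y_2=O(\ve)$ in hand, by Proposition~\ref{prop:fusion} no dipole can form, and by Theorem~\ref{gustaf}, away from a vanishingly short window around $t_*:=(x_1-x_2)/\big(\tfrac{\Gamma_1}{4\pi\hat y_1}-\tfrac{\Gamma_2}{4\pi\hat y_2}\big)$, each vortex moves as a single monopole along the boundary. Rescaling both heights and $x_{\mathsf{rel}}$ by $\ve$, the limiting reduced orbit is the level curve of
\[
\widehat H(\xi,\eta_1,\eta_2)=\Gamma_1\Gamma_2\log\frac{\xi^2+(\eta_1+\eta_2)^2}{\xi^2+(\eta_1-\eta_2)^2}+\Gamma_1^2\log(2\eta_1)+\Gamma_2^2\log(2\eta_2)
\]
through the value $\lim_{\xi\to\infty}\widehat H(\xi,\hat y_1,\hat y_2)=\Gamma_1^2\log(2\hat y_1)+\Gamma_2^2\log(2\hat y_2)$, subject to $\Gamma_1\eta_1+\Gamma_2\eta_2=\Gamma_1\hat y_1+\Gamma_2\hat y_2$; since $\widehat H\to-\infty$ as $\eta_1\to0$ or $\eta_2\to0$, this curve stays in a compact subset of $\{\eta_1,\eta_2>0\}$ while reaching $\xi=\pm\infty$, and tracing it shows it runs from $x_{\mathsf{rel}}=+\infty$ to $x_{\mathsf{rel}}=-\infty$, i.e.\ the vortices pass. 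For $\Gamma_1=\Gamma_2$ one can make this explicit: at $\xi=0$ a short computation gives $(\eta_1-\eta_2)^2/(\eta_1\eta_2)=(\hat y_1+\hat y_2)^2/(\hat y_1\hat y_2)\ge4$, so the encounter configuration lies outside the leapfrogging window $(3-2\sqrt2,\,3+2\sqrt2)$ and no trapping occurs. Upgrading from trajectory convergence to all times is identical to Proposition~\ref{prop:fission}.

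The main obstacle is the a priori height bound in the opposite--sign case together with the exclusion of a leapfrogging (bound) state: momentum conservation alone does not confine the heights there, and one genuinely needs $\Gamma_1\neq-\Gamma_2$ --- geometrically, $\Gamma_1+\Gamma_2\neq0$ means a bound pair would have to orbit rather than translate, so it cannot creep along the boundary at the very negative energy level fixed by the initial configuration.
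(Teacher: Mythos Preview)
Your approach is genuinely different from the paper's.  You first spend effort proving an a~priori height bound $y_1,y_2=O(\ve)$ from the conserved momentum and Hamiltonian, then rescale and study the level curve of the $\ve$--independent reduced Hamiltonian $\widehat H$.  The paper, by contrast, never isolates a height bound: it writes the conservation laws as a single relation, solves it algebraically for $x_{\mathsf{rel}}^2=g_\ve(y_{\mathsf{rel}})/f_\ve(y_{\mathsf{rel}})$, and shows directly (by locating a sign change of $f_\ve$ near the maximizer of $(\hat y_{\mathsf{abs}}+\tfrac{k}{1+k}y)^{1/k}(\hat y_{\mathsf{abs}}-\tfrac{1}{1+k}y)^{k}$) that the curve has a vertical asymptote, hence $x_{\mathsf{rel}}$ is unbounded.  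So where you separate ``trapping'' from ``scattering,'' the paper gets both at once from the asymptote.

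Your trapping step looks correct and is a nice observation in its own right; the case split on $|y_1-y_2|\lessgtr c(y_1+y_2)$ works, with the caveat that in Case~2 the implicit constants depend on $c$, which must first be chosen small enough for Case~1.  However, the trapping by itself does not imply ``passing'': a pair with $y_i=O(\ve)$ could still be a bound (leapfrogging) state with $x_{\mathsf{rel}}$ oscillating on an $O(\ve)$ scale.  This is precisely the content of your last step, and that is where the gap lies.  The sentence ``tracing it shows it runs from $x_{\mathsf{rel}}=+\infty$ to $x_{\mathsf{rel}}=-\infty$'' is the heart of the matter and is not justified for general $\Gamma_1,\Gamma_2$.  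You give a clean computation only in the special case $\Gamma_1=\Gamma_2$ (where the leapfrogging threshold is available), but the proposition is for all $\Gamma_1\neq-\Gamma_2$.  Also note that for finite $\ve$ the level value of $\widehat H$ is $h_0+O(\ve^2)$, not exactly $h_0$, so even a correct analysis at the limiting value needs a robustness argument to transfer to small $\ve$.  Finally, the appeals to Proposition~\ref{prop:fusion} and Theorem~\ref{gustaf} are misplaced: the former concerns only the case $\Gamma_1=-\Gamma_2$ (excluded here), and the latter treats a single vortex, so neither does the work you want.

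In short: your height bound is correct and elegant, but it reduces the problem rather than solving it.  The decisive step --- excluding a bound orbit on the rescaled level set for arbitrary $\Gamma_1\neq-\Gamma_2$ --- still needs a proof, and the paper's algebraic route (finding the zero of $f_\ve$) is exactly a way to supply it without passing through the height bound at all.
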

\begin{proof}
	Introduce relative and absolute coordinates:
	\be 
	x_{\mathsf{rel}} = x_1-x_2, \quad y_{\mathsf{rel}} = y_1 - y_2,\quad
	x_{\mathsf{abs}}= \frac{\Gamma_1 x_1 + \Gamma_2 x_2}{\Gamma_1 + \Gamma_2},\quad
	y_{\mathsf{abs}}= \frac{\Gamma_1 y_1 + \Gamma_2 y_2}{\Gamma_1 + \Gamma_2}, 
	\ee
	and denote
	\be 
	\hat{y}_{\mathsf{rel}} = \hat{y}_1 - \hat{y}_2,\quad
	\hat{y}_{\mathsf{abs}}= \frac{\Gamma_1 \hat{y}_1 + \Gamma_2 \hat{y}_2}{\Gamma_1 + \Gamma_2}.
	\ee
Conservation of linear momentum $y_{\mathsf{abs}}$ and energy $H$ yields the equation of trajectory in $(x_{\mathsf{rel}}, y_{\mathsf{rel}})$ coordinates
	\begin{align}
	&\left(\hat{y}_{\mathsf{abs}} + \frac{\Gamma_2}{\Gamma_1 + \Gamma_2}\frac{y_{\mathsf{rel}}}{\ve}\right)^{\Gamma_1^2} \left(\hat{y}_{\mathsf{abs}} - \frac{\Gamma_1}{\Gamma_1 + \Gamma_2}\frac{y_{\mathsf{rel}}}{\ve}\right)^{\Gamma_2^2}\bigg[\frac{x_{\mathsf{rel}}^2 + \ve^2(2\hat{y}_{\mathsf{abs}} - \frac{\Gamma_1 - \Gamma_2}{\Gamma_1 + \Gamma_2}\frac{y_{\mathsf{rel}}}{\ve})^2}{x_{\mathsf{rel}}^2 + y_{\mathsf{rel}}^2}\bigg]^{\Gamma_1\Gamma_2} = \\
	&\qquad	\left(\hat{y}_{\mathsf{abs}} + \frac{\Gamma_2}{\Gamma_1 + \Gamma_2}\hat{y}_{\mathsf{rel}}\right)^{\Gamma_1^2} \left(\hat{y}_{\mathsf{abs}} - \frac{\Gamma_1}{\Gamma_1 + \Gamma_2}\hat{y}_{\mathsf{rel}}\right)^{\Gamma_2^2}\bigg[\frac{\hat{x}_{\mathsf{rel}}^2 + \ve^2(2\hat{y}_{\mathsf{abs}} - \frac{\Gamma_1 - \Gamma_2}{\Gamma_1 + \Gamma_2}\hat{y}_{\mathsf{rel}})^2}{\hat{x}_{\mathsf{rel}}^2 +\ve^2 \hat{y}_{\mathsf{rel}}^2}\bigg]^{\Gamma_1\Gamma_2}.
	\end{align}
	Rearranging,  the above equality gives
	\begin{align}
		x_{\mathsf{rel}}^2 &= \tfrac{y_{\mathsf{rel}}^2
			\left(\hat{y}_{\mathsf{abs}} + \frac{k}{1+k}\hat{y}_{\mathsf{rel}}\right)^{1/k}
			\left(\hat{y}_{\mathsf{abs}} - \frac{1}{1+k}\hat{y}_{\mathsf{rel}}\right)^{k} 
			\left(\frac{\hat{x}_r^2 + \ve^2(2\hat{y}_{\mathsf{abs}} - \frac{1-k}{1+k}\hat{y}_{\mathsf{rel}})^2}{\hat{x}_{\mathsf{rel}}^2 + \ve^2\hat{y}_{\mathsf{rel}}^2}\right) - \ve^2(2\hat{y}_{\mathsf{abs}} - \frac{1 - k}{1 + k}\frac{y_{\mathsf{rel}}}{\ve})^2\left(\hat{y}_{\mathsf{abs}} + \frac{k}{1+k}\frac{y_{\mathsf{rel}}}{\ve}\right)^{1/k}
			\left(\hat{y}_{\mathsf{abs}} - \frac{1}{1+k}\frac{y_{\mathsf{rel}}}{\ve}\right)^{k}}{
			\left(\hat{y}_{\mathsf{abs}} + \frac{k}{1+k}\frac{y_{\mathsf{rel}}}{\ve}\right)^{1/k}
			\left(\hat{y}_{\mathsf{abs}} - \frac{1}{1+k}\frac{y_{\mathsf{rel}}}{\ve}\right)^{k} -
			\left(\hat{y}_{\mathsf{abs}} + \frac{k}{1+k}\hat{y}_{\mathsf{rel}}\right)^{1/k}
			\left(\hat{y}_{\mathsf{abs}} - \frac{1}{1+k}\hat{y}_{\mathsf{rel}}\right)^{k} 
			\left(\frac{\hat{x}_{\mathsf{rel}}^2 + \ve^2(2\hat{y}_{\mathsf{abs}} - \frac{1-k}{1+k}\hat{y}_{\mathsf{rel}})^2}{\hat{x}_{\mathsf{rel}}^2 + \ve^2\hat{y}_{\mathsf{rel}}^2}\right)
		}\\ &=:\frac{g_{\ve}(y_{\mathsf{rel}})}{f_{\ve}(y_{\mathsf{rel}})}, \quad \text{where} \quad k := \frac{\Gamma_2}{\Gamma_1}.
	\end{align}
	We aim to show that, for sufficiently small $\ve$, $x_{\mathsf{rel}}$ is unbounded along the trajectory.  This would prove that after the two vortices meet, they do not become bound forever. For that, it is sufficient  to establish the existence of $y_{\mathsf{rel}}^{\text{crit}}$ for which 
	\be
	f_\ve(y_{\mathsf{rel}}^{\text{crit}}) = 0, \quad f'_\ve(y_{\mathsf{rel}}) \neq 0, \quad \text{and } g_\ve(y_{\mathsf{rel}}^{\text{crit}}) \neq 0.
	\ee
	To this end, consider
	\be 
	y_{\mathsf{rel}}^* =\arg\max_{y \in (-\frac{k}{1+k} \hat{y}_{\mathsf{abs}},\frac{1}{1+k} \hat{y}_{\mathsf{abs}})} \left(\hat{y}_{\mathsf{abs}} + \frac{k}{1+k}y\right)^{1/k}
	\left(\hat{y}_{\mathsf{abs}} - \frac{1}{1+k}y\right)^{k},
	\ee
and observe that for sufficiently small $\ve >0$, $f_{\ve}(y_{\mathsf{rel}}^*) > 0 $ and $ f_{\ve}(y_{\mathsf{rel}}) < 0$.
Hence, for sufficiently small $\ve$ such $y_{\mathsf{rel}}^{\text{crit}}$ satisfying $f_{\ve}(y_{\mathsf{rel}}^{\text{crit}}) = 0$ exists.  The other two conditions follow by a direct verification.
\end{proof}

\begin{cor}
Only vortices of equal strength and opposite sign can exchange their pairs. They form a multiple dipole billiard  system satisfying the fission--fusion rules. In other options the vortices of different pairs do not interact in the limit of zero initial separation.
\end{cor}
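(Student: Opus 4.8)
The plan is to assemble the corollary from the single-dipole analysis of Section~\ref{sect:vortex-billiard} --- in particular Steps~1--3, Proposition~\ref{splitting}, and Propositions~\ref{prop:fission} and \ref{prop:fusion} --- together with the ``no merger'' statement of Proposition~\ref{nomerge}, and to argue that for generic initial data the only mechanism by which a vortex can change its partner is a binary collision of two monopoles on the boundary. So I would first set up $n$ dipoles, the $k$-th carrying circulations $\pm\ve\Gamma_k$ with an $O(\ve)$ intra-pair separation, all the $O(1)$ distances, incidence points, and fission/fusion data being in general position, and ask for the $\ve\to 0$ limit of this system.

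The first step is to observe that distinct dipoles do not interact while both lie in the interior $D^{in}_\delta$: a dipole is charge-neutral to leading order, so at an $O(1)$ distance the two monopole contributions (each of circulation $O(\ve)$) nearly cancel and the induced velocity on another dipole is $O(\ve^2)$. Hence, away from $\partial D$, each dipole travels on its own straight line exactly as in Step~1 and fissions at the boundary into two monopoles of circulations $\pm\ve\Gamma_k$, independently of the others, as in Propositions~\ref{splitting} and \ref{prop:fission}. The resulting monopoles then slide along $\partial D$ at $O(1)$ speeds governed by Theorem~\ref{gustaf}; for generic data these speeds are pairwise distinct, and since $\partial D$ is one-dimensional, monopoles originating from different dipoles are forced to meet, and (again generically) only two at a time.

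The second step is the local analysis of such a meeting. By the localization argument of Step~2 --- rewriting the Laplacian in the boundary-adapted coordinates $(\tilde z,\tilde s)$, so that Green's function agrees with the half-plane one up to $o(\ve)$ --- the motion of any two monopoles in a boundary neighborhood is, to leading order, governed by the two-vortex half-plane Hamiltonian, with the curvature not contributing. Thus a collision of a monopole of circulation $\ve\Gamma_a$ with one of circulation $\ve\Gamma_b$ falls under Proposition~\ref{prop:fusion} when $\Gamma_a=-\Gamma_b$ and under Proposition~\ref{nomerge} when $\Gamma_a\neq-\Gamma_b$. In the first case the two monopoles re-form a dipole --- possibly pairing vortices that came from different original dipoles --- whenever the ratio of their scaled heights lies in the silver window $(\chi^{-2},\chi^2)$, and pass otherwise; in the second case Proposition~\ref{nomerge} gives that for all small $\ve$ they simply pass, i.e.\ there is no interaction in the limit. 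Running Step~2 backwards re-injects any newly formed dipole into $D^{in}_\delta$, and the cycle repeats. Since, with dipoles of the form $\pm\ve\Gamma$, the equality $\Gamma_a=-\Gamma_b$ forces $|\Gamma_a|=|\Gamma_b|$, only vortices of equal strength and opposite sign can ever exchange partners; all other pairs are inert in the limit, and the equal-strength dipoles assemble into the multiple-dipole billiard obeying the fission--fusion rules.

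The main obstacle is controlling the limit through the sequence of collision events. One must exclude, on a full-measure set of initial conditions, simultaneous or triple collisions and tangential grazing of monopoles; check that the $o(\ve)$ errors accumulated over the successive interior flights, fissions, boundary travels, and fusions do not compound --- since up to a fixed time there are only $O(1)$ events, this is a matter of uniformity of the estimates in Propositions~\ref{prop:fission}--\ref{nomerge} rather than a genuinely new difficulty; and verify that a ``passing'' collision leaves no residual kick in the limit, which is precisely the unboundedness of $x_{\mathsf{rel}}$ along the trajectory established in Proposition~\ref{nomerge}, transported from the half-plane to the curved boundary via the Step~2 localization.
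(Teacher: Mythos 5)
Your proposal is correct and follows essentially the same route as the paper: the corollary is obtained by combining the half-plane localization near the boundary (Step 2) with the fission--fusion analysis of Propositions~\ref{prop:fission} and \ref{prop:fusion} for opposite equal circulations, and with Proposition~\ref{nomerge}, which rules out mergers whenever $\Gamma_a\neq-\Gamma_b$, so partner exchange can only occur between vortices of equal strength and opposite sign. The extra points you raise (the $O(\ve^2)$ dipole--dipole interaction in the interior, genericity of binary collisions, and uniformity of the error estimates over finitely many events) are details the paper leaves implicit rather than a different argument.
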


As a consequence of Proposition \ref{nomerge}, the system for multiple dipoles, strictly speaking, is no longer a billiard.
Indeed, one must keep track of the relative speeds (or, equivalently, distances to the boundary) of vortices of opposite signs
to determine the angle for their reentering the domain if  they ever meet on the boundary.  Unlike the case of one dipole, such an angle  is no longer conserved, since a vortex might meet a stranger vortex, of different speed, before its original partner, while it travels along the boundary.  One can check that the new ``fusion rule'' for 
vortices meeting along the boundary is:
\begin{enumerate}
			\item if $\Gamma_1 = -\Gamma_2$ and the ratio of their boundary velocities satisfies
			\be
			\chi^{-2}\leq \bigg|\frac{v_1}{v_2}\bigg| \leq \chi^2
			\ee
			where $\chi$ is the silver ratio, then the  monopoles form a dipole which enters the interior at angle $\theta = \theta({v_1}/{v_2})$ and moves at speed $\sqrt{-v_1v_2}$.  Here $\theta({v_1}/{v_2})$ is the explicit function derived in Proposition \ref{prop:fusion}:
			\be
				\theta(r) = \tan^{-1}\sqrt{\frac{(r - 1/r)^2}{-r^2 + 6 r - 1}}.
			\ee
			\item otherwise, monopoles continue traveling along the boundary  without interaction.
			\end{enumerate}		
It is interesting  that the addition of just one extra dipole of equal strength seems to yield a completely chaotic dynamical system.  See Figure \ref{figmultiple} for numerical simulations on a disk domain.

\begin{figure}[htb]\centering
    \includegraphics[width=.32\columnwidth]{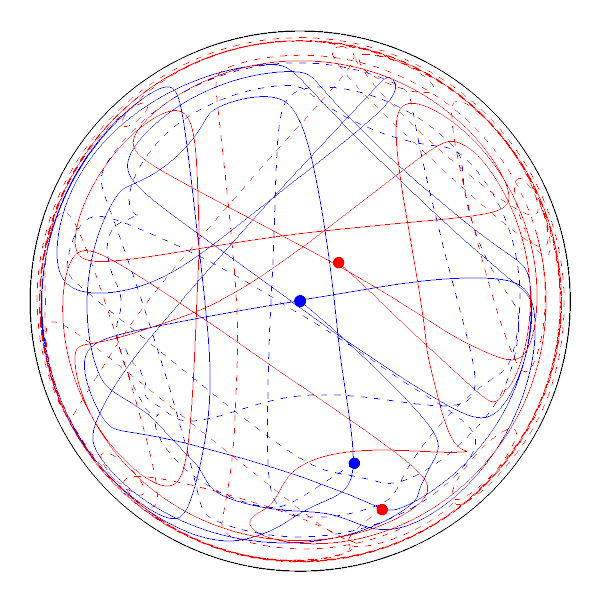} 
        \includegraphics[width=.32\columnwidth]{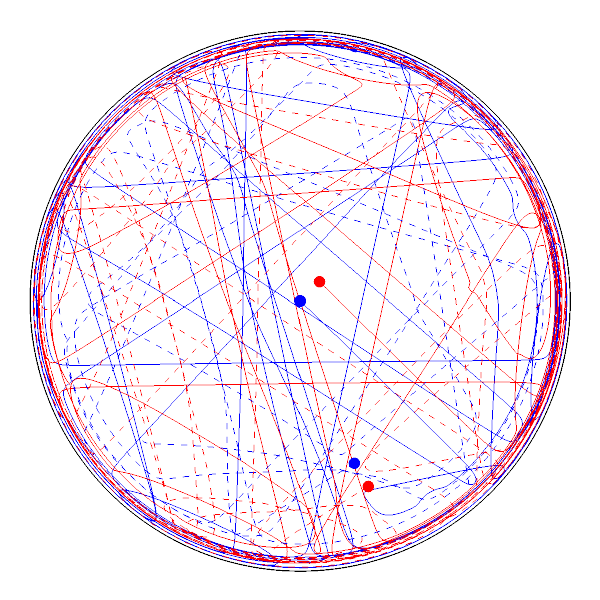} 
            \includegraphics[width=.32\columnwidth]{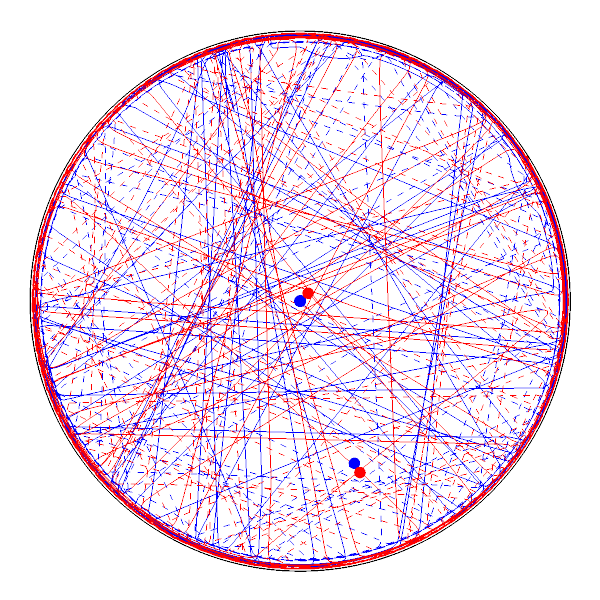} 
  \caption{Two dipole pairs on the disk. Trajectories of vortices with  positive circulation are shown in red, 
  while for vortices with   negative circulation in blue. Initially, two dipoles are formed by the solid and dashed vortices.  The separation in their initial positions is decreasing left to right.}
\label{figmultiple}
\end{figure}


\section{Pensive outer  billiards}\label{sect:outer}

\subsection{Definition of pensive outer billiards}
Recall the definition of an outer billiard, following \cite{Tab}.
Given a smooth strictly convex curve $\gamma$ in the plane, the outer 
billiard is the following map from the exterior of $\gamma$  into itself. Let $X$ be a point outside $\gamma$. 
There exist two tangent lines to $\gamma$  passing through $X$. Choose one of them, say, the right one 
from the viewpoint of $X$, and reflect $X$ in the tangency point $P$ to obtain a new point $Y=\mathsf{OB}(X)$ (see left panel of Figure \ref{figouter}).
The map $\mathsf{OB}$ is called the {\it outer billiard map} for the curve $\gamma$.

\begin{figure}[htb]\centering
	\includegraphics[width=.9\columnwidth]{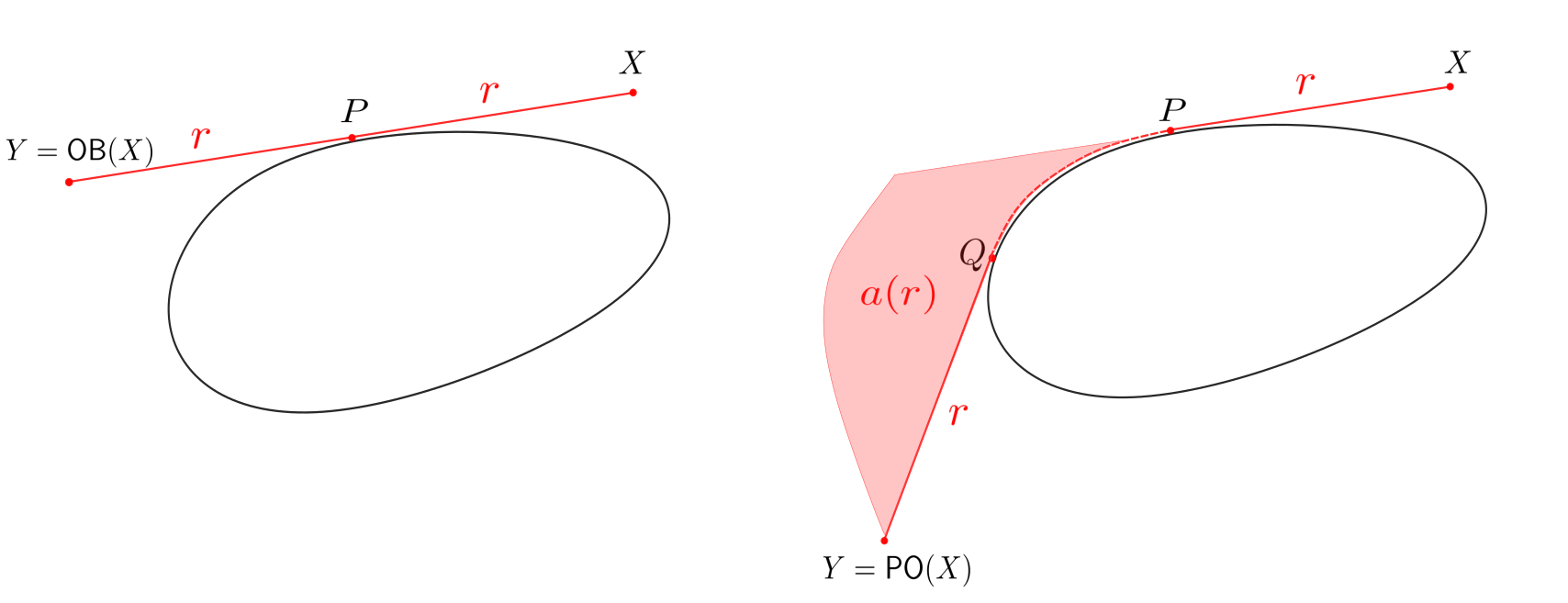} 
	\caption{Left: outer billiard. Right: pensive outer billiard.}
	\label{figouter}
\end{figure}

\begin{defi}\label{def:swept-area}
Given a strictly convex domain $D\subset \R^2$ with boundary $\gamma = \partial D$ and a delay function
$a (r), \, r>0$  a {\it pensive outer  billiard} $\mathsf{PO}:\,\R^2\setminus D \to \R^2\setminus D$ is a map
which sends a point $X$ outside $D$ to the following point $Y=\mathsf{PO}(X)$.
Take the right tangent line from $X$ to $\gamma$, suppose that the tangency point is $P\in \gamma$, and the length 
of the tangent segment is $r:=|XP|$. Now continue the motion along $\gamma$ until the tangent segment of length $r$ {\it sweeps the area} $a(r)$, and let $Q$ be the point at which it happens. After that continue moving from $Q$ along the new tangent for the distance $r$ to arrive at $Y$.
\end{defi}

Note that the definition above is a manifestation of the principle that notions related to the length in classical billiards are replaced with the area for  outer billiards.
As we show below, {\it on the plane} this definition is equivalent to the following one:

\begin{defi}{\rm (=\ref{def:swept-area}$'$)}\label{def:swept-angle}
Given a strictly convex domain $D\subset \R^2$ with boundary $\gamma = \partial D$ and a delay function
$\theta (r), \, r>0$  a {\it pensive outer  billiard} $\mathsf{PO} $ sends a point $X$ outside $D$ to the following point $Y=\mathsf{PO}(X)$. If $XP$ is the right tangent line from $X$ to $\gamma$ with the tangency point  $P\in \gamma$ and  $r:=|XP|$, we continue the motion along $\gamma$ until $Q\in \gamma$, where the tangent segment makes the {\it angle} $\theta(r)$ with the initial tangent $XP$. Then the point $Y$ is at the distance $r$ from $Q$ along this new tangent.
\end{defi}

For $ a(r)\equiv 0$  or $\theta(r)\equiv 0$ one obtains the standard outer billiard $\mathsf{OB}$.  
The equivalence of the two definitions on the plane follows from the following proposition.

\begin{prop}\label{angledef}
	The condition that the  area swept by the segment of length $r$ is equal to $a(r)$ in $\R^2$ is equivalent to the condition that the angle between tangents at $P$ and $Q$ is  $\theta(r) = 2a(r)/r^2$.
\end{prop}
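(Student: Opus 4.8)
The plan is to compute the area swept by the moving tangent segment directly, by viewing it as the image of a rectangle under an explicit map and integrating the Jacobian; the answer will turn out to be proportional to the total turning of the tangent line, which is precisely the angle appearing in Definition~\ref{def:swept-angle}.

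First I would parametrize $\gamma$ by arc length as $\gamma(s)$, with unit tangent $\tau(s)$, inward unit normal $n(s)$, and curvature $\kappa(s)>0$ (strict convexity), so that $\tau'(s)=\kappa(s)\,n(s)$. Writing $s_P,s_Q$ for the arc-length parameters of $P$ and $Q$, the region swept by the tangent segment of length $r$ as the tangency point runs from $P$ to $Q$ is the image of
\[
\Phi\colon [s_P,s_Q]\times[-r,0]\to\R^2,\qquad \Phi(s,u)=\gamma(s)+u\,\tau(s),
\]
where $u=0$ is the tangency point and the segment extends length $r$ towards the $X$-side. Then $\partial_s\Phi=\tau(s)+u\kappa(s)\,n(s)$ and $\partial_u\Phi=\tau(s)$, so $\det D\Phi(s,u)=-u\,\kappa(s)$, which is nonnegative on the rectangle and vanishes only along $u=0$.

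Integrating, the swept area is
\[
a(r)=\int_{s_P}^{s_Q}\!\!\int_{-r}^{0}(-u)\,\kappa(s)\,du\,ds=\frac{r^2}{2}\int_{s_P}^{s_Q}\kappa(s)\,ds .
\]
It then remains only to identify $\int_{s_P}^{s_Q}\kappa(s)\,ds$ with $\theta(r)$: since $\tau'=\kappa n$, this integral is the total angle through which the unit tangent rotates between $P$ and $Q$, hence it equals the angle between the tangent line at $P$ (the line $XP$) and the tangent line at $Q$, which is $\theta(r)$ by definition. Therefore $a(r)=\tfrac12 r^2\,\theta(r)$, i.e. $\theta(r)=2a(r)/r^2$, and Definitions~\ref{def:swept-area} and \ref{def:swept-angle} coincide on the plane.

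The one point that deserves care, and which I expect to be the main (minor) obstacle, is the precise meaning of ``swept area'': the computation above produces $\int_{[s_P,s_Q]\times[-r,0]}|\det D\Phi|$, i.e. area counted with multiplicity. Since $\det D\Phi$ is sign-definite off $u=0$, $\Phi$ is an immersion, and for a strictly convex curve one checks that $\Phi$ is in fact injective on the interior of the rectangle as long as the total turning $\int_{s_P}^{s_Q}\kappa\,ds$ is less than $\pi$ — in particular for all sufficiently small $r$, which is the regime of interest (and the only regime in which the angle $\theta(r)$ itself is unambiguously below $\pi$). If one simply takes ``swept area'' to mean the area with multiplicity, the identity $\theta(r)=2a(r)/r^2$ holds with no restriction. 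I would include a short remark making this convention explicit so that the statement is unambiguous.
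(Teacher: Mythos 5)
Your proof is correct and is essentially the paper's argument in different clothes: the paper works in the tangent coordinates $(\alpha,r)$, where the area form is $r\,dr\wedge d\alpha$, and integrates to get $a=\tfrac{r^2}{2}(\alpha_2-\alpha_1)$, while your Jacobian computation in $(s,u)$ gives $a=\tfrac{r^2}{2}\int\kappa\,ds$, which is the same integral after the substitution $d\alpha=\kappa\,ds$ identifying total curvature with the turning of the tangent. Your closing remark on counting area with multiplicity (or restricting to total turning below $\pi$) is a reasonable clarification that the paper leaves implicit.
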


Before proving this statement we introduce coordinates in which outer billiards are convenient to work with. Parametrize $\gamma$ by the angle $\alpha$ that the  tangent makes with a given (say horizontal) direction. For each point $X \in \R^2\setminus D$, let right and left tangents from $X$ to $\gamma$ have lengths $r, \bar{r}$ and meet $\gamma$ at points $\gamma(\alpha), \gamma(\bar{\alpha})$. In this way, $(\alpha,r)$ and $(\bar{\alpha}, \bar{r})$ form two coordinate systems on $\R^2\setminus D$. 
\begin{lemma}
	In these coordinate systems, standard area form on the plane is given by
	$\mu = rdr\wedge d\alpha = \bar{r}d\bar{r}\wedge d\bar{\alpha}$.
\end{lemma}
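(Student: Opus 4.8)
The plan is to realize $\R^2\setminus D$ explicitly as the image of the chart $(\alpha,r)$, then pull back the Euclidean area form $\mu=dx\wedge dy$ by a one-line determinant computation using the standard identities of the tangent-angle parametrization.

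First I would record the parametrization facts. Put $\mathbf t(\alpha)=(\cos\alpha,\sin\alpha)$ and $\mathbf n(\alpha)=(-\sin\alpha,\cos\alpha)=\mathbf t'(\alpha)$. Strict convexity of $D$ guarantees that $\alpha\mapsto\gamma(\alpha)$ runs once around $\gamma$, that the curvature $\kappa(\alpha)>0$, and that $\gamma'(\alpha)=\rho(\alpha)\,\mathbf t(\alpha)$ with $\rho=1/\kappa=ds/d\alpha>0$ the radius of curvature. The right-tangent chart is then the map $\Phi(\alpha,r)=\gamma(\alpha)+\sigma r\,\mathbf t(\alpha)$ for the fixed sign $\sigma=\pm1$ which places $X=\Phi(\alpha,r)$ so that $\gamma(\alpha)$ is its right tangency point; by strict convexity $\Phi\colon S^1\times(0,\infty)\to\R^2\setminus D$ is a diffeomorphism, since every exterior point lies on exactly one right tangent.

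Next I would differentiate: $\partial_\alpha\Phi=\gamma'(\alpha)+\sigma r\,\mathbf t'(\alpha)=\rho\,\mathbf t+\sigma r\,\mathbf n$ and $\partial_r\Phi=\sigma\,\mathbf t$. Since $(\mathbf t,\mathbf n)$ is a positively oriented orthonormal frame, $\det(\partial_\alpha\Phi,\partial_r\Phi)=\det(\rho\mathbf t+\sigma r\mathbf n,\,\sigma\mathbf t)=\sigma^2 r\det(\mathbf n,\mathbf t)=-r$, independently of $\sigma$, so $\Phi^*\mu=\det(\partial_\alpha\Phi,\partial_r\Phi)\,d\alpha\wedge dr=-r\,d\alpha\wedge dr=r\,dr\wedge d\alpha$. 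This is the first asserted identity; as a sanity check, for $\gamma$ the unit circle $\Phi(\alpha,r)$ sits at Euclidean distance $\sqrt{1+r^2}$ from the center, and in polar coordinates $\rho_{\mathrm{pol}}\,d\rho_{\mathrm{pol}}=r\,dr$ while $d\varphi_{\mathrm{pol}}=d\alpha+\frac{dr}{1+r^2}$, which reproduces $\mu=r\,dr\wedge d\alpha$.

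Finally, the $(\bar\alpha,\bar r)$ identity requires no new work: the left-tangent chart is the same construction with the two tangent lines interchanged — equivalently, the right-tangent chart of $\gamma$ traversed in reverse — so the identical determinant computation applied to $\bar\Phi(\bar\alpha,\bar r)=\gamma(\bar\alpha)+\bar\sigma\bar r\,\mathbf t(\bar\alpha)$ yields $\mu=\bar r\,d\bar r\wedge d\bar\alpha$. The only point demanding attention in the whole argument is the orientation/sign bookkeeping needed to make the pullback come out as $+r\,dr\wedge d\alpha$; the computation above shows this is actually insensitive to the choice of $\sigma$, so there is no real obstacle, and once the identities $\gamma'=\rho\,\mathbf t$, $\mathbf t'=\mathbf n$ are in hand the proof is immediate.
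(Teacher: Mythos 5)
Your proposal is correct and follows essentially the same route as the paper: the paper's proof is exactly the direct Jacobian computation from the parametrization $(x,y)=\gamma(\alpha)+r(\cos\alpha,\sin\alpha)$ (with the remark that these are ``essentially polar coordinates,'' which your unit-circle sanity check makes explicit), and your argument just carries out that computation in full detail, including the sign bookkeeping. Nothing is missing.
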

\begin{proof}
	Indeed, these are essentially polar coordinates on the plane. This also follows from a direct computation using the formulas 
	\be
	(x,y) = \gamma(\alpha) + r\frac{\gamma'(\alpha)}{|\gamma'(\alpha)|}, \quad \text{where} \quad
	\frac{\gamma'(\alpha)}{|\gamma'(\alpha)|} = (\cos\alpha,\sin\alpha),
	\ee
	and the analogous ones for $(\bar{\alpha}, \bar{r})$.
\end{proof}

Now the proof of Proposition \ref{angledef} is immediate:

\begin{proof}
	This follows from the fact that the area swept by a tangent segment of length $r$ traveling along $\gamma$ as $\alpha $ changes from $\alpha_1$ to $\alpha_2$ is  
	\be
	\int_{0}^{r}\int_{\alpha_1}^{\alpha_2}rdrd\alpha = 	\int_{0}^{r}rdr\int_{\alpha_1}^{\alpha_2}d\alpha =\frac{r^2}{2}(\alpha_2 - \alpha_1).
	\ee
\end{proof}

The main property of a pensive outer billiard, ``inherited'' from the standard outer billiard is given by the following 

\begin{theorem}
Let $D\subset \R^2$ be  convex.  The standard area 2-form $\mu$ on $\R^2$  is $\mathsf{PO}$-invariant (or, equivalently, the  pensive outer billiard $\mathsf{PO}$ is a symplectomorphism of $\R^2\setminus D$) for an arbitrary smooth function $a(r)$.
\end{theorem}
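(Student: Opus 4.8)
The plan is to imitate the proof given above for pensive (inner) billiards: factor $\mathsf{PO}$ as a composition of the classical outer billiard $\mathsf{OB}$ with a boundary shift that is manifestly area-preserving, and then invoke the known invariance of $\mu$ under $\mathsf{OB}$. Throughout I would work in the tangent-angle coordinates $(\alpha,r)$ and $(\bar\alpha,\bar r)$ on $\R^2\setminus D$ introduced above, in which the preceding Lemma gives $\mu=r\,dr\wedge d\alpha=\bar r\,d\bar r\wedge d\bar\alpha$.

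First I would record the classical fact (the outer-billiard analogue of the billiard lemma used above): reflecting a point $X$ with right-tangent data $(\alpha,r)$ through its tangency point $\gamma(\alpha)$ keeps the tangent line and the distance to $\gamma(\alpha)$ fixed, but interchanges the roles of the left and right tangents, so $\mathsf{OB}(X)$ has \emph{left}-tangent data $(\bar\alpha,\bar r)=(\alpha,r)$. Since $\mu$ has the same coordinate expression in the two charts, this identification yields $\mathsf{OB}^{*}\mu=\mu$. Next I would introduce the shift $\mathsf{Sh}\colon\R^2\setminus D\to\R^2\setminus D$ which in the right-tangent chart reads $\mathsf{Sh}\colon(\alpha,r)\mapsto(\alpha+\theta(r),r)$ with $\theta(r)=2a(r)/r^2$; geometrically it sends $X$ to the exterior point whose right tangent touches $\gamma$ at $Q=\gamma(\alpha+\theta(r))$ with the same tangent length $r$.

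The key step is the factorization $\mathsf{PO}=\mathsf{OB}\circ\mathsf{Sh}$. By Proposition~\ref{angledef}, moving the tangency point forward along $\gamma$ until the tangent has turned by the angle $\theta(r)$ is the same prescription as sweeping the area $a(r)$; hence applying $\mathsf{OB}$ to $\mathsf{Sh}(X)$ (that is, reflecting it through $Q$) lands exactly at the point $Y=\mathsf{PO}(X)$, which lies at distance $r$ from $Q$ along the new tangent on the far side, as required by Definitions~\ref{def:swept-area} and \ref{def:swept-angle}. The invariance then follows from a one-line computation: since $\mathsf{Sh}$ fixes the $r$-coordinate,
\[
\mathsf{Sh}^{*}\mu = r\,dr\wedge d\bigl(\alpha+\theta(r)\bigr) = r\,dr\wedge\bigl(d\alpha+\theta'(r)\,dr\bigr)=r\,dr\wedge d\alpha=\mu,
\]
so that $\mathsf{PO}^{*}\mu=\mathsf{Sh}^{*}\bigl(\mathsf{OB}^{*}\mu\bigr)=\mathsf{Sh}^{*}\mu=\mu$.

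The substance of the argument is entirely contained in the factorization $\mathsf{PO}=\mathsf{OB}\circ\mathsf{Sh}$, and the only thing that requires care is the bookkeeping of orientation conventions — ``right tangent'', ``forward along $\gamma$'', ``continue a distance $r$'' — so that the reflection description of $\mathsf{OB}$ matches Definition~\ref{def:swept-angle}; this is routine once Proposition~\ref{angledef} is in hand. For an arbitrary smooth $a$ one should also note that $\mathsf{Sh}$ need only be treated as a local diffeomorphism that preserves the $r$-coordinate, which already suffices for the pointwise identity of $2$-forms, so no issue of global well-definedness of $\mathsf{Sh}$ arises. After this there is no remaining computation.
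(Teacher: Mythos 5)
Your proof is correct and follows essentially the same route as the paper: both factor $\mathsf{PO}$ into the classical outer billiard $\mathsf{OB}$ (which in the right/left tangent charts reads $(\alpha,r)\mapsto(\bar\alpha,\bar r)=(\alpha,r)$, hence preserves $\mu$) composed with the shift $(\alpha,r)\mapsto(\alpha+2a(r)/r^2,r)$ supplied by Proposition \ref{angledef}, and check area-preservation of each factor by the same one-line pullback computation. Your explicit attention to the order of composition and the orientation bookkeeping is a minor refinement of what the paper leaves implicit, not a different argument.
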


\begin{proof}

By definition of the outer billiard map, it sends a  point with coordinates $(\alpha,r)$ to the point with coordinates $(\bar{\alpha}, \bar{r}) = (\alpha, r)$. It is then immediate that $\mathsf{OB}$ is area-preserving:
\be
\mathsf{OB}^*\mu = \mathsf{OB}^*(\bar{r}d\bar{r}\wedge d\bar{\alpha}) = rdr\wedge d\alpha = \mu
\ee
We then note that by Proposition \ref{angledef}, pensive outer billiard map $\mathsf{PO}$  is a composition of  $\mathsf{OB}$ and a shift map $\mathsf{Sh}$, given in the $(\alpha, r)$ coordinates by $(\alpha,r) \mapsto (\alpha + 2a(r)/r^2, r)$. The latter map is also area-preserving:
\be
\mathsf{Sh}^*\mu = \mathsf{Sh}^*(rdr\wedge d\alpha) = rdr\wedge d(\alpha+ 2a(r)/r^2) = rdr\wedge d\alpha = \mu.
\ee 
\end{proof}

\subsection{Duality of pensive billiards on the sphere}
Similarly to classical and pensive billiards, the definition of the pensive outer  billiard can be generalized to non-flat domains. 
The following consideration of the pensive outer  billiard on a spherical domain justifies Definition  \ref{def:swept-area} with the swept area, as opposed to Definition \ref{def:swept-angle} valid only in the flat case: on the sphere these two definitions are not equivalent!
\begin{theorem}\label{duality}
	The pensive billiard with delay function $\tilde{\ell}(\theta)$ and pensive outer  billiard with delay function $a(\theta)$ are projectively dual on the unit sphere, provided that 
	\be\label{dualityrelation}
	a(\theta) = \tilde{\ell}(\theta)(1 - \cos\theta).
	\ee
\end{theorem}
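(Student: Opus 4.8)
The plan is to deduce the statement from the classical projective (polar) duality between inner and outer billiards on the sphere (see \cite{Tab}) by checking that the two ``pensive shifts'' correspond to one another precisely when $a(\theta)=\tilde\ell(\theta)(1-\cos\theta)$.

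First I would set up polar duality on the unit sphere: a point $z\in S^2$ is dual to the great circle $z^{\perp}$, an oriented great circle is dual to its oriented pole, and for a convex curve $\gamma\subset S^2$ in an open hemisphere the dual curve $\gamma^{*}$ is the locus of poles of the oriented tangent great circles of $\gamma$, equivalently the locus of unit normals $n(q)$, $q\in\gamma$, with $(\gamma^{*})^{*}=\gamma$. Under this correspondence an oriented great circle $C$ meeting the interior of $\gamma$ in a chord $q_1q_2$ maps to a point $C^{*}$ outside $\gamma^{*}$; the two tangent segments from $C^{*}$ to $\gamma^{*}$ touch at the duals $q_1^{*},q_2^{*}$ of $q_1,q_2$; and the tangent length $|C^{*}q_i^{*}|$ equals the dihedral angle between $C$ and the tangent great circle $T_{q_i}\gamma$, i.e.\ the incidence angle $\theta_i$ of the chord at $q_i$. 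With the standard orientation conventions this is exactly the classical duality: the billiard map on $\gamma$ is conjugate to the outer billiard on $\gamma^{*}$, the reflection point $q\in\gamma$ corresponds to the tangency point $q^{*}\in\gamma^{*}$, and the incidence angle $\theta$ corresponds to the tangent length $r=\theta$.

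Next, both pensive maps factor as (classical map)$\,\circ\,$(shift): the inner shift moves the reflection point a distance $\tilde\ell(\theta)$ along $\gamma$, while the outer shift moves the tangency point along $\gamma^{*}$ until the segment of length $r$ sweeps area $a(r)$. Since $q\leftrightarrow q^{*}$ and $r=\theta$, the theorem reduces to one geometric identity: if the contact point runs over an arc of $\gamma^{*}$ corresponding under duality to an arc of length $\Delta s$ of $\gamma$, then the tangent segment of length $r$ sweeps area $A=(1-\cos r)\,\Delta s$. Granting this, the inner shift $\Delta s=\tilde\ell(\theta)$ sweeps area $(1-\cos\theta)\tilde\ell(\theta)$ on the dual side, so matching it with the outer shift of delay $a$ forces $a(\theta)=(1-\cos\theta)\tilde\ell(\theta)$, which is \eqref{dualityrelation}. (A planar sanity check: for small $r$ this is $\tfrac{r^2}{2}\,(\text{total turning})$, recovering Proposition~\ref{angledef}.)

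The substantive step is this swept-area identity, which I would prove by a Jacobi field computation. Parametrize the contact point $c(u)\in\gamma^{*}$ by arc length, let $T(u)$ be its unit tangent, and set $\Phi(u,v)=\exp_{c(u)}(vT(u))$, $v\in[0,r]$. Along each geodesic $v\mapsto\Phi(u,v)$, the variation field $J=\partial_u\Phi$ is a Jacobi field with $J(0)=c'(u)=T(u)$, which is tangent to the geodesic, so its normal component $g$ has $g(0)=0$; moreover $\tfrac{D}{dv}J(0)=\tfrac{D}{du}T=k_g^{*}(c(u))\,N^{*}$, so $g'(0)=k_g^{*}$. On the unit sphere normal Jacobi fields satisfy $g''+g=0$, hence $g(v)=k_g^{*}\sin v$, the area element of $\Phi$ is $|g(v)|\,du\,dv$, and integrating gives $A=(1-\cos r)\int|k_g^{*}|\,du$, i.e.\ $(1-\cos r)$ times the total turning of the tangent of $\gamma^{*}$ along the arc. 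Biduality then identifies this turning with $\Delta s$: the tangent great circle of $\gamma^{*}$ at $c(u)$ has pole $\bar q(u)=\pm\,c(u)\times T(u)$ lying on $(\gamma^{*})^{*}=\gamma$, and differentiating, using the spherical Frenet equations $c'=T$ and $T'=-c+k_g^{*}N^{*}$ with $N^{*}=c\times T$, gives $\bar q'(u)=-k_g^{*}\,T$, so $ds=|\bar q'(u)|\,du=|k_g^{*}|\,du$ and hence $\int|k_g^{*}|\,du=\Delta s$. The main obstacle is exactly this identity — the Jacobi field computation together with the biduality relation $ds/du=|k_g^{*}|$ — and, running alongside it, the orientation bookkeeping that selects the ``right'' tangency point and the correct direction of the shift along $\gamma^{*}$; getting these conventions consistent is what produces the factor $1-\cos\theta$ rather than $1+\cos\theta$, after which everything else follows from the classical duality of \cite{Tab}.
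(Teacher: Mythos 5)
Your proposal is correct and follows essentially the same route as the paper: reduce to the classical spherical duality, show that the tangent segment of angular length $r$ sweeps area equal to $(1-\cos r)$ times the total geodesic turning of the dual curve, and then identify that turning with arclength along $\gamma$ via biduality, which forces $a(\theta)=\tilde{\ell}(\theta)(1-\cos\theta)$. The only difference is in the computations backing the two ingredients: you obtain the swept-area formula by a Jacobi-field argument and the turning-equals-arclength identity from the spherical Frenet equations for $\bar q = c\times T$, whereas the paper's Proposition \ref{geolem} uses the explicit parametrization $\cos\beta\,\sigma(s)+\sin\beta\,\sigma'(s)/|\sigma'(s)|$ of the swept strip and then verifies $|\gamma_*''\cdot(\gamma_*\times\gamma_*')|/|\gamma_*'|^2=1$ for $\gamma_*=\gamma\times\gamma'$ by direct cross-product algebra -- the same content in slightly different language.
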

	\begin{figure}[htb]\centering
		\includegraphics[width=.5\columnwidth]{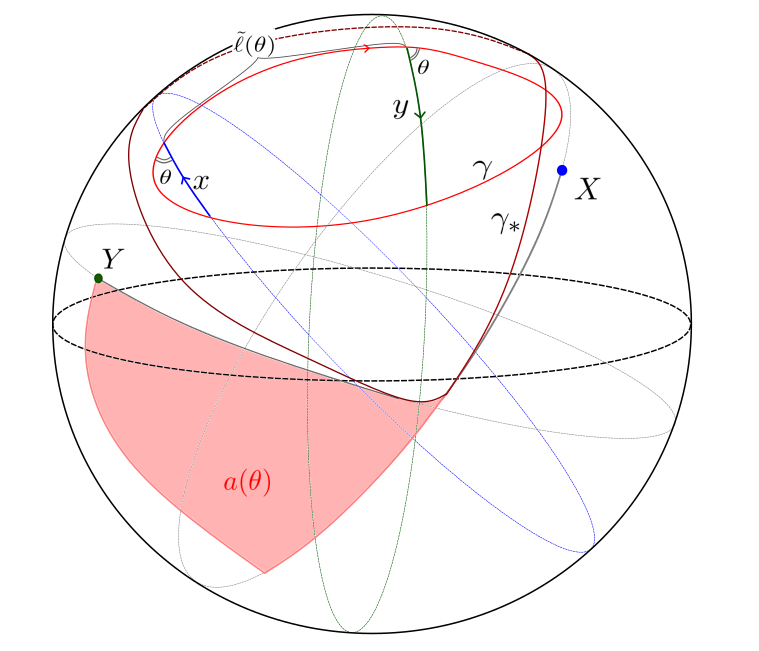} 
		\caption{Duality of pensive billiards and pensive outer billiards on the sphere. Pensive billard map with respect to red curve $\gamma$ sends great circle $x$ to great circle $y$. Pensive outer billiard map with respect to dual curve $\gamma_*$ sends point $X$ dual to $x$ to point $Y$ dual to $y$. Moreover, functions $\tilde{\ell}$ and $a$ are related by Equation \eqref{dualityrelation}}
		\label{figduality}
	\end{figure}
	This theorem mimics the  one for the classical and outer billiards on the sphere, see \cite{Tab}.
	The projective duality in the sphere interchanges points (poles) and the corresponding oriented great circles (equators), analogs of lines on the sphere. An oriented tangent line (great circle) to a curve $\gamma$  is sent to a point of the dual curve $\gamma_*$, which by definition  consists of poles of the tangent great circles to $\gamma$. 	
	An immediate corollary of this duality  is that the pensive outer billiard  on the sphere preserves the spherical areas.

	Compared to the classical case, we need the following additional  geometric result, which can be regarded as a generalization of the Archimedes theorem (as we discuss below).
	
	\begin{prop}\label{geolem}
		Let $\sigma(s)$ be any curve in $S^2 \subset \R^3$ (not necessarily parametrized by arclength). The area swept by the segment of angular length $\theta$ tangent to $\sigma$ at $\sigma(s)$ as $s$ varies between $s_1$ to $s_2$ is 
		\be
		(1 - \cos\theta)\int_{s_1}^{s_2}\frac{|\sigma''(s) \cdot (\sigma(s) \times \sigma'(s)))|}{|\sigma'(s)|^2}ds.
		\ee
	\end{prop}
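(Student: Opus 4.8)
The plan is to introduce a moving orthonormal frame along $\sigma$ adapted to the unit sphere, parametrize the swept region by this frame, and read off its area from the induced area element; the computation then collapses to a one-variable integral.

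First I would set up the frame $\{\sigma(s),\,T(s),\,N(s)\}$, where $T(s):=\sigma'(s)/|\sigma'(s)|$ and $N(s):=\sigma(s)\times T(s)$. Because $\sigma$ lies on the unit sphere, $|\sigma|\equiv1$ gives $\sigma'\perp\sigma$, hence $T\perp\sigma$, so the frame is orthonormal and, with this ordering, positively oriented: $\sigma\times T=N$, $T\times N=\sigma$, $N\times\sigma=T$. Next I would record the ``spherical Frenet'' relations. With $v(s):=|\sigma'(s)|$ one has $\sigma'=vT$ by definition; differentiating $\sigma\cdot T\equiv0$ gives $T'\cdot\sigma=-v$, and $|T|\equiv1$ gives $T'\cdot T=0$, so
\[
T' = -v\,\sigma + w\,N, \qquad w(s):=T'(s)\cdot N(s).
\]
Differentiating $\sigma'=vT$ once more yields $\sigma''=-v^{2}\sigma+v'T+vw\,N$, so $w=(\sigma''\cdot N)/v$; substituting $N=(\sigma\times\sigma')/v$ produces the quantity appearing in the statement,
\[
w(s) = \frac{\sigma''(s)\cdot\bigl(\sigma(s)\times\sigma'(s)\bigr)}{|\sigma'(s)|^{2}}.
\]

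Then I would parametrize the sweep. The great-circle segment of angular length $\theta$ tangent to $\sigma$ at $\sigma(s)$, with one endpoint at $\sigma(s)$ (as in the outer-billiard convention, where the tangency point is an endpoint of the segment), is the image of $\phi\mapsto\cos\phi\,\sigma(s)+\sin\phi\,T(s)$ for $\phi\in[0,\theta]$; hence the swept region is the image of $F(s,\phi):=\cos\phi\,\sigma(s)+\sin\phi\,T(s)$, $(s,\phi)\in[s_1,s_2]\times[0,\theta]$. Using the relations above, $F_\phi=-\sin\phi\,\sigma+\cos\phi\,T$ and $F_s=-v\sin\phi\,\sigma+v\cos\phi\,T+w\sin\phi\,N$, and a short cross-product computation in the frame $\{\sigma,T,N\}$ shows the $N$-components cancel:
\[
F_s\times F_\phi = -\,w(s)\sin\phi\,\bigl(\cos\phi\,\sigma+\sin\phi\,T\bigr) = -\,w(s)\sin\phi\;F(s,\phi),
\]
which is forced, being normal to the sphere at $F(s,\phi)$. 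Therefore the area element of the sweep is $|F_s\times F_\phi|\,ds\,d\phi=|w(s)|\sin\phi\,ds\,d\phi$ (here $0\le\phi\le\theta<\pi$, so $\sin\phi\ge0$), and integrating in $\phi$ first,
\[
\int_{s_1}^{s_2}\!\!\int_0^{\theta}|w(s)|\sin\phi\;d\phi\,ds = (1-\cos\theta)\int_{s_1}^{s_2}|w(s)|\,ds = (1-\cos\theta)\int_{s_1}^{s_2}\frac{|\sigma''\cdot(\sigma\times\sigma')|}{|\sigma'|^{2}}\,ds,
\]
which is the asserted formula.

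The step needing the most care is the interpretation of ``area swept'': the integral above is the area of $F\bigl([s_1,s_2]\times[0,\theta]\bigr)$ counted with multiplicity, so I would either assume the generating tangent segments are pairwise disjoint (so $F$ is an embedding and the count is honest) or phrase the conclusion as: the pullback $F^{*}(\mathrm{area\ form})$ integrates to the right-hand side---which is all that is needed for the application to short tangent segments along a convex curve. I would also remark that the absolute value in the integrand simply records the unsigned area (the geodesic-curvature type quantity $w$ may change sign), that a parametrized great circle has $w\equiv0$ and swept area $0$, consistent with all its tangent great circles coinciding, and that the factor $1-\cos\theta=\int_0^{\theta}\sin\phi\,d\phi$ is precisely what ties the statement to the classical Archimedes ``zone equals cylinder'' result.
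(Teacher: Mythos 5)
Your proof is correct and follows essentially the same route as the paper's: both parametrize the swept region by $(s,\phi)\mapsto\cos\phi\,\sigma(s)+\sin\phi\,\sigma'(s)/|\sigma'(s)|$ and integrate $|F_s\times F_\phi|=|w(s)|\sin\phi$, the paper evaluating this via the triple product $|r\cdot(r_s\times r_\beta)|$ while you expand the cross product in the moving frame $\{\sigma,T,N\}$ — a cosmetic difference. Your added caveat about counting area with multiplicity is a reasonable refinement but does not change the argument.
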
 
\begin{proof}
	The region swept by the tangent segment of length $\theta$ is parametrized by 
	\be
	r(s,\beta) = \cos\beta\sigma(s) + \sin\beta\frac{\sigma'(s)}{|\sigma'(s)|}, \qquad (s, \beta) \in [s_1, s_2]\times[0, \theta].
	\ee
	The area is consequently computed as 
	\be
	\begin{aligned}
	&\int_0^\theta\int_{s_1}^{s_2}\bigg|\frac{\partial r}{\partial s}\times \frac{\partial r}{\partial \beta}\bigg| \,ds\,d\beta = \int_0^\theta\int_{s_1}^{s_2}\bigg|r \cdot \bigg(\frac{\partial r}{\partial s}\times \frac{\partial r}{\partial \beta}\bigg)\bigg| \,ds\,d\beta \\
	&= \int_0^\theta\int_{s_1}^{s_2}\sin\beta\frac{|\sigma''(s) \cdot (\sigma(s) \times\sigma'(s))|}{|\sigma'(s)|^2}\,ds\,d\beta 
	=(1 -\cos \theta)\int_{s_1}^{s_2}\frac{|\sigma''(s) \cdot (\sigma(s) \times \sigma'(s)))|}{|\sigma'(s)|^2}ds,
	\end{aligned}
\ee	
where we used that $\sigma(s)\cdot\sigma'(s) = 0$ together with the symmetries of the triple product.
\end{proof}
\begin{proof}[Proof of Theorem]
	The (pensive) billiard map acts on oriented curves: an incidence ray is mapped  to the reflected one. 
	For the classical billiard, at the moment of reflection in $\gamma$ the incidence ray $a$, the tangent $p$ to $\gamma$
	and the reflected ray $b$ pass through the same point and make equal angles. On the sphere this translates to the corresponding three poles $A, P,$ and $B$, lying on the same line (great circle) and making distances equal to the angle of incidence, $|AP|=|{PB}| = \theta$. The latter is the definition of the outer billiard. 
	
	For the pensive billiards, before reflecting the point slides along 
	$\gamma$ for the distance $\tilde{\ell}(\theta)$ depending on the incidence angle $\theta$ and then reflected from the tangent $\tilde p$. We shall see that the area swept by the corresponding segment moving along the dual curve is equal to 
	\be
	\tilde{\ell}(\theta)(1 - \cos\theta).
	\ee 
	Given an arclength parametrization $\gamma(s)$ of $\gamma$, dual curve $\gamma_*$ can be parametrized by 
	\be
	\gamma_*(s) = \gamma(s)\times \gamma'(s).
	\ee
	Hence, the statement will follow from Proposition \ref{geolem} applied to $\sigma = \gamma_*$ if we prove that 
	\be
	\frac{|\gamma_*''(s) \cdot (\gamma_*(s) \times \gamma_*'(s)))|}{|\gamma_*'(s)|^2} = 1.
	\ee
	This is indeed the case. 
	Expanding $\gamma''(s)$ in the ortonormal basis $(\gamma(s), \gamma'(s), \gamma(s)\times\gamma'(s))$ we find 
	\be
	\gamma''(s) = -\gamma(s) + (\gamma''(s)\cdot(\gamma(s)\times\gamma'(s)))\gamma(s)\times\gamma'(s).
	\ee
	Hence,
	\be
	|\gamma_*'(s)|^2 = |\gamma(s)\times\gamma''(s)|^2| = |(\gamma''(s)\cdot(\gamma(s)\times\gamma'(s))|^2.
	\ee
	On the other hand, 
		\be
	\begin{aligned}
	|\gamma_*''(s) \cdot (\gamma_*(s) \times \gamma_*'(s)))| &= |\gamma_*'(s) \cdot (\gamma_*(s) \times \gamma_*''(s)))| = 
	|(\gamma(s)\times\gamma''(s)) \cdot (\gamma_*(s) \times \gamma_*''(s)))| =\\
	&=|\gamma''(s)\cdot(\gamma(s)\times\gamma'(s))| |\gamma'(s) \cdot (\gamma_*(s) \times \gamma_*''(s)))|\\
	&=|\gamma''(s)\cdot(\gamma(s)\times\gamma'(s))| |\gamma_*''(s) \cdot \gamma(s)|\\
	&=|\gamma''(s)\cdot(\gamma(s)\times\gamma'(s))| |(\gamma \times \gamma'''(s) + \gamma'(s) \times \gamma''(s)) \cdot \gamma(s)|\\
	&=|\gamma''(s)\cdot(\gamma(s)\times\gamma'(s))||(\gamma'(s) \times \gamma''(s)) \cdot \gamma(s)|\\
	&=|\gamma''(s)\cdot(\gamma(s)\times\gamma'(s))|^2.
	\end{aligned}
	\ee
	This concludes our proof.
\end{proof}
Note that Proposition \ref{geolem} includes a theorem due to Archimedes as a special case:
\begin{cor}[Archimedes Theorem]
	Enclose the unit sphere by a cylinder of diameter 2 and height 2. The projection of the sphere onto this cylinder preserves area. 
\end{cor}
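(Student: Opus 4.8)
The plan is to deduce the Archimedes theorem by applying Proposition~\ref{geolem} to a circle of latitude on $S^2$ and recognizing the region swept by its tangent segments as a spherical zone.

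Concretely, I would fix a height $c\in(-1,1)$ and take $\sigma(s)=(\sqrt{1-c^2}\cos s,\ \sqrt{1-c^2}\sin s,\ c)$ for $s\in[0,2\pi]$, the parallel at height $z=c$. A short computation of the kind already carried out in the proof of Theorem~\ref{duality} gives $\sigma(s)\times\sigma'(s)=(-c\sqrt{1-c^2}\cos s,\ -c\sqrt{1-c^2}\sin s,\ 1-c^2)$, hence $\sigma''(s)\cdot(\sigma(s)\times\sigma'(s))=c(1-c^2)$ and $|\sigma'(s)|^2=1-c^2$, so the integrand appearing in Proposition~\ref{geolem} is the constant $|c|$. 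Therefore the area swept by the segments of angular length $\theta$ tangent to $\sigma$, as $s$ runs over a full period, equals $2\pi(1-\cos\theta)\,|c|$.

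Next I would identify this swept region. The segment tangent to $\sigma$ at $\sigma(s)$ is an arc of the great circle through $\sigma(s)$ with horizontal velocity; such a great circle reaches its extreme height $c$ exactly at the point of tangency, and its height at angular distance $\beta$ from that point is $c\cos\beta$. Moreover, for each fixed $\beta$ the points at distance $\beta$ along these tangents, as the tangency point runs over $\sigma$, fill the whole parallel at height $c\cos\beta$ (this is visible from the explicit parametrization $r(s,\beta)$ used in the proof of Proposition~\ref{geolem}). Consequently the swept region is exactly the spherical zone lying between the heights $c$ and $c\cos\theta$, covered once. Comparing with the area just computed yields the classical fact that the zone of $S^2$ between heights $z_1<z_2$ has area $2\pi(z_2-z_1)$ — first when $z_1,z_2$ have the same sign, and then in general by splitting at the equator, using the reflection symmetry $z\mapsto -z$, and letting the endpoints tend to $\pm1$.

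Finally, the radial projection of $S^2$ onto the enclosing cylinder preserves the height $z$ and commutes with rotations about the $z$-axis; a band $\{z_1\le z\le z_2\}$ of the cylinder has area $2\pi(z_2-z_1)$, which by the preceding step equals the area of the corresponding spherical zone. Since both the spherical and the cylindrical area measures are invariant under axial rotations and have the same push-forward $2\pi\,dz$ to $[-1,1]$, they are intertwined by the projection, i.e.\ the projection preserves area. (Equivalently, in the $(s,\beta)$ coordinates on a zone the projection alters the longitude only by a function of $\beta$, so both area forms equal $|c|\sin\beta\,ds\wedge d\beta$.) The only mildly delicate points — that the swept region is the full zone covered exactly once, and the passage from the zone formula to area-preservation on arbitrary regions — are routine once this picture is in place, so I do not expect a genuine obstacle here.
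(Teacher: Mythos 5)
Your proposal is correct and follows essentially the same route as the paper's proof: apply Proposition~\ref{geolem} to a circle of latitude, observe that the swept region is the spherical zone between two heights and that its area is $2\pi$ times the height difference, and compare with the corresponding cylindrical band. You simply carry out the integrand computation explicitly and are a bit more careful about zones crossing the equator and the final passage from bands to arbitrary regions, which the paper leaves implicit.
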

\begin{proof}[Proof of Corollary]
	It is enough to prove the theorem for a region $R$ bounded by two parallel planes $\{z = h_1\}$ and $\{z = h_2\}$, where we can assume $0<h_2 < h_1$. We note that this region is given by swiping a tangent segment along the circle lying in the first plane; one can see that the length $\theta$ of the segment needs to satisfy $h_1(1 - \cos\theta) = h_1 - h_2$. Applying lemma to this segment, note that by  symmetry of the triple product, the integrand is given by the projection of $\gamma(s)$ onto $\gamma'(s)\times \gamma''(s)$, which is here just a projection of $\gamma(s)$ onto the vertical direction, i.e. $h_1$.
	
	Consequently, area of $R$ is given by 
	\be
			(1 - \cos\theta)\int_{s_1}^{s_2}\frac{|\sigma''(s) \cdot (\sigma(s) \times \sigma'(s)))|}{|\sigma'(s)|^2}ds = \frac{h_1 - h_2}{h_1}\int_{0}^{2\pi}h_1 ds = 2\pi(h_1 - h_2),
	\ee
	which is the same as the area of the projection of $R$ onto cylinder.
\end{proof}

The above duality of two types of pensive billiards allows one to make hydrodynamical meaning
for pensive outer billiards  on the sphere: they are dual to systems of vortex dipoles in spherical regions in the limit of zero separation.

\medskip

We note  that, since many facts in dynamics often turn out to be simpler to prove for outer billiards than for classical ones, it would be interesting 
to extend other discussed results, including the twist property, generating functions, the golden and silver ratios, periodic trajectories, etc. to the domain of pensive outer billiards.  One can consider also pensive outer billiards on other surfaces with constant curvature, for example the hyperbolic plane.  Various features, such as area preservation of the billiard map, can be extended to that case.

In conclusion, there are many interesting questions worth investigating for the pensive billiard system which we have not studied. For example
\begin{itemize}
\item The pensive billiard map has a variational formulation but, in general, is not a twist map. What can be said (using Morse theory) about the periodic orbits in this case?
\item We prove very little about the existence (and non-existence) of caustics. Does KAM theory applies, is there a version of Lazutkin’s theorem in this setting? What can be said towards Mather’s non-existence criterion?
First results in that direction for puck billiards were recently obtained in \cite{Barbieri}.
\item The billiard system inside ellipses is integrable. For which combinations of the delay functions and shapes of  billiard tables will the pensive billiards be integrable? 
\item What is the multidimensional version of pensive billiards?
\end{itemize}

 \subsection*{Acknowledgments}  
 We are grateful to an anonymous referee for highlighting the concluding questions.
TDD  is also grateful  for support from the Alfred P. Sloan foundation. DG is also grateful for support from Simons Foundation International, LTD.  BK is  indebted to Sergei Tabachnikov for fruitful discussions, as well as to the Simons Center for Geometry and Physics (SCGP) and to the Institut des Hautes \'Etudes Scientifiques (IHES) for their stimulating research environment and support. 
 \smallskip
 
Conflicts of interest: none.

 \subsection*{Financial Support}  
  TDD and DG were partially supported by the NSF DMS-2106233 grant and NSF CAREER award \#2235395.    BK  was partially supported by an NSERC Discovery Grant RGPIN-2025-06427.

\bibliographystyle{acm}
\bibliography{biblio-dipole}

\end{document}